\documentclass[a4paper,12pt]{article}
\pagestyle{headings}
\usepackage{amsmath}
\usepackage[vlined]{algorithm2e}
\usepackage{diagbox}
\usepackage{enumerate}
\usepackage{appendix}
\usepackage{longtable}
\usepackage{cases}

\title{Lowest-order equivalent nonstandard finite element methods for biharmonic plates}
\author{Carsten Carstensen\footnote{Department of Mathematics, 
Humboldt-Universit\"{a}t zu Berlin, 10099 Berlin, Germany.
Distinguished Visiting Professor, Department of Mathematics, Indian institute of 
Technology Bombay, Powai, Mumbai-400076, India.  cc@math.hu-berlin.de}        
\quad\text{and}\quad 
Neela Nataraj\footnote{Department of Mathematics, Indian Institute of Technology Bombay, Powai, Mumbai 400076, India. neela@math.iitb.ac.in}
}

\SetFuncSty{textsc}
        \SetKwFunction{frun}{Run}
        \SetKwHangingKw{arun}{\frun}
        \SetKwFunction{fset}{Set}
        \SetKwHangingKw{aset}{\fset}
        \SetKwFunction{fselect}{Select}
        \SetKwHangingKw{aselect}{\fselect}
        \SetKwFunction{fcompute}{Compute}
        \SetKwHangingKw{acompute}{\fcompute}
        \SetKwFunction{fsolve}{Solve}
        \SetKwHangingKw{asolve}{\fsolve}
        \SetKwFunction{festimate}{Estimate}
        \SetKwHangingKw{aestimate}{\festimate}
        \SetKwFunction{fmark}{Mark}
        \SetKwHangingKw{amark}{\fmark}
        \SetKwFunction{frefine}{Refine}
        \SetKwHangingKw{arefine}{\frefine}
        \SetKwFunction{compute}{Compute}
        \SetKwFunction{set}{Set}

{\algorithm}%
   {\endalgorithm}



\usepackage{mathtools}

\DeclareFontEncoding{LS1}{}{}
\DeclareFontSubstitution{LS1}{stix}{m}{n}
\DeclareSymbolFont{symbols2}{LS1}{stixfrak} {m} {n}
\DeclareMathSymbol{\operp}{\mathbin}{symbols2}{"A8}

\usepackage{amsmath,amsthm,amssymb,enumerate}
\usepackage[T1]{fontenc}
\usepackage{gensymb}
\usepackage[square,sort&compress,comma,numbers]{natbib}
\usepackage[sc]{mathpazo}
\usepackage{multirow} 
\usepackage{newtxtext,newtxmath}
\usepackage{subfig}
\usepackage{graphicx}
\usepackage{epstopdf}
\usepackage{cancel}
\usepackage[margin=3cm]{geometry}
\usepackage{tikz}
\usepackage{caption}
\usetikzlibrary{shapes,calc}
\usepackage{verbatim}
\usepackage{mathrsfs}
\usepackage{accents}
\usepackage[utf8]{inputenc}

\usetikzlibrary{decorations.pathmorphing}
\usetikzlibrary{decorations.pathreplacing}
\usetikzlibrary{positioning}
\usetikzlibrary{shapes}
\usetikzlibrary{arrows}
\usetikzlibrary{patterns}
\usetikzlibrary{fadings}
\usetikzlibrary{plotmarks}
\usetikzlibrary{calc}
\usetikzlibrary{intersections}
\tikzstyle{every picture}+=[font=\footnotesize]
\usepackage{paralist}
%
\usepackage{bbm}
\usepackage{latexsym}           
\usepackage{enumerate}
\usepackage{enumitem}

\setlist{noitemsep, topsep=0.8ex, partopsep=0pt
	, leftmargin=3em}
\setlist[1]{labelindent=\parindent}

\newlist{axioms}{enumerate}{1}
\setlist[axioms]{font=\bfseries}

\newlist{alphenum}{enumerate}{1}
\setlist[alphenum]{label=\textbf{(\alph*)}, leftmargin=4em}

\newlist{alphienum}{enumerate}{1}
\setlist[alphienum]{label=\textit{(\alph*)}}

\newlist{romanenum}{enumerate}{1}
\setlist[romanenum]{label=\textit{(\roman*)}}

\newlist{romaninenum}{enumerate*}{1}
\setlist[romaninenum]{label=\textit{(\roman*)}}
\usepackage[vlined]{algorithm2e}
\SetKwIF{If}{ElseIf}{Else}{if}{}{else if}{else}{endif}
\SetKwFor{For}{for}{}{endfor}
\usepackage[noabbrev, capitalise]{cleveref}
\usepackage{tabu}
\tabulinesep=0.5ex
\crefname{equation}{\unskip}{\unskip}
\creflabelformat{equation}{#2(#1)#3}
%
        \SetFuncSty{textsc}
        \SetKwFunction{frun}{Run}
        \SetKwHangingKw{arun}{\frun}
        \SetKwFunction{fset}{Set}
        \SetKwHangingKw{aset}{\fset}
        \SetKwFunction{fselect}{Select}
        \SetKwHangingKw{aselect}{\fselect}
        \SetKwFunction{fcompute}{Compute}
        \SetKwHangingKw{acompute}{\fcompute}
        \SetKwFunction{fsolve}{Solve}
        \SetKwHangingKw{asolve}{\fsolve}
        \SetKwFunction{festimate}{Estimate}
        \SetKwHangingKw{aestimate}{\festimate}
        \SetKwFunction{fmark}{Mark}
        \SetKwHangingKw{amark}{\fmark}
        \SetKwFunction{frefine}{Refine}
        \SetKwHangingKw{arefine}{\frefine}
        \SetKwFunction{compute}{Compute}
        \SetKwFunction{set}{Set}

\usepackage{tikz}
\usetikzlibrary{matrix}
\usepackage{tikz-cd}
%




\hfuzz1pc 


\newtheorem{thm}{Theorem}[section]

\newtheorem{lem}[thm]{Lemma}

\theoremstyle{definition}
\newtheorem{defn}{Definition}[section]

\newtheorem{rem}{Remark}[section]
\newtheorem{example}{\bf Example}[section]

\numberwithin{equation}{section}


\newcommand{\cE}{\mathcal E}

\newcommand{\cT}{\mathcal T}

\newcommand{\pw}{\mathrm{pw}}
\newcommand{\nc}{\mathrm{M}}
\newcommand{\jc}{\mathrm{J}}
\newcommand{\osc}{\mathrm{osc}}
\newcommand{\qo}{\mathrm{qo}}
\newcommand{\dc}{\mathrm{dc}}

\newcommand{\hnorm}[1]{\lVert #1 \rVert_{h}}

 %
 %


\newcommand{\St}{P_2(\cT)}
\newcommand{\NC}{\text{pw}}
\newcommand{\C}{\mathrm{C}}
\newcommand{\w}{\mathrm{P}}

\newcommand{\ip}{{\mathrm{IP}}}

\newcommand{\half}{\frac{1}{2}}
\newcommand{\trinl}{\ensuremath{|\!|\!|}}
\newcommand{\trinr}{\ensuremath{|\!|\!|}}

\newcommand{\dx}{{\rm\,dx}}

\newcommand{\ds}{{\rm\,ds}}

\newcommand{\dg}{{\rm dG}}


\DeclareMathOperator{\E}{\mathcal{E}}
\newcommand{\M}{\mathrm{M}}


\newcommand{\T}{\mathcal{T}}
\newcommand{\J}{\mathcal{J}}

\newcommand{\dgnorm}[1]{\lVert #1 \rVert_{\mathrm{dG}}}
\newcommand{\ipnorm}[1]{\lVert #1 \rVert_{\mathrm{IP}}}
\newcommand{\norm}[1]{\left\Vert #1 \right\Vert}

\def\mean#1{\left<#1\right>_E}
\def\jump#1{\left[ #1\right]_E}



\newcommand{\mmid}{\operatorname{mid}}
\newcommand{\dist}{\operatorname{dist}}

\newcommand{\V}{\mathcal{V}}

  
   \usepackage{xparse}
   
   \newcounter{const}
\setcounter{const}{0}
\NewDocumentCommand{\constant}{o}
 {
  \IfValueTF{#1}%
  {C_{#1}}%
  {\refstepcounter{const}%
  C_{\theconst}}%
 }


\usepackage{tikz,pgfplots}
\usetikzlibrary{calc,angles,positioning,intersections,quotes,decorations.markings}
\usepackage{tkz-euclide}
\pgfplotsset{compat=1.5}
\begin{document}	
\maketitle
\begin{abstract}
The popular (piecewise) quadratic  schemes for the biharmonic equation based on triangles
are the nonconforming  Morley finite element, the 
discontinuous Galerkin,   the $C^0$ interior penalty, and the WOPSIP schemes. Those methods are modified in their right-hand side $F\in H^{-2}(\Omega)$ replaced by 
 $F\circ (JI_\M) $ and then are quasi-optimal in their respective discrete norms.
 The smoother  $JI_\M$ is defined for { a} piecewise 
 smooth input function by a (generalized) Morley interpolation $I_\M$ followed by 
 a companion operator $J$.  An abstract framework for the error analysis in the energy, 
 weaker and piecewise Sobolev norms for the schemes is outlined and applied to the biharmonic equation. Three errors are also equivalent in some particular 
 discrete norm from [Carstensen, Gallistl,  Nataraj:   Comparison results of nonstandard 
 $P_2$ finite element methods for the biharmonic problem, ESAIM Math. Model. Numer. Anal. (2015)]
 without data oscillations.
 This paper extends the work 
[Veeser, Zanotti:  Quasi-optimal nonconforming methods for symmetric elliptic problems, 
SIAM J. Numer. Anal. 56 (2018)]
to the  discontinuous Galerkin scheme and adds  error estimates in weaker and piecewise Sobolev norms.
\end{abstract}

\noindent {\bf Keywords:} biharmonic problem,  best-approximation, a~priori error estimates, companion operator, $C^0$ interior penalty, discontinuous Galerkin method, WOPSIP, Morley, comparison

\noindent {\bf AMS Classification:}  65N30, 65N12, 65N50

\section{Introduction}\label{sectionintroduction}
The paper contributes to lower-order nonstandard finite element methods for a biharmonic plate problem
in a real Hilbert space  $(V, a)$.  Given $F\in L^2(\Omega)$,  \cite{CarstensenGallistlNataraj2015} compares the errors for  nonstandard finite element methods (FEM) of the clamped biharmonic plate problem
based on piecewise quadratic polynomials, namely the 
nonconforming Morley FEM \cite{Ciarlet}, 
the symmetric interior penalty discontinuous Galerkin FEM (dGFEM) \cite{FengKarakashian:2007:CahnHilliard}, 
and
the $C^0$ interior penalty method ($C^0$IP) \cite{BS05}, 
with respective solutions 
$u_\M$, $u_h$, and $u_\ip$; Table~\ref{table:summary} displays details of the respective schemes. For $F\in L^2(\Omega)$, dGFEM and hp-dGFEM for biharmonic and fourth-order problems,  are extensively studied in \cite{Engel02, FengKarakashian:2007:CahnHilliard,   MozoSuli03,MozoSuli07,MozoSuliBosing07, Georgoulis2009, Georgoulis2011}.

\medskip \noindent For a general right-hand side $F \in H^{-2}(\Omega)$, the standard right-hand side $F(v_h)$ remains undefined for nonstandard finite element methods. A postprocessing procedure in \cite{BS05} enables to introduce a new $C^0$IP method for right-hand sides in $H^{-2}(\Omega)$. 
 In  \cite{veeser_zanotti1,veeser_zanotti2, veeser_zanotti3}, the discrete  test functions are transformed into conforming functions ($J$ is called smoother in those works) before applying the load functional  and quasi-optimal energy norm estimates
\begin{align*}
& \|{u-u_\M}\|_{\NC}  \approx \min_{v_\M \in\M(\T)} \|u-v_\M\|_{\NC}, \;
\|{u-u_\ip} \|_{\rm IP}  \approx \min_{v_{\rm IP} \in {\rm IP}(\T)} \|u-v_{\rm IP}\|_{\rm IP}
\end{align*}
are derived for the Morley FEM and $C^0$IP method.  

\medskip 
{The papers \cite{veeser_zanotti1,veeser_zanotti2, veeser_zanotti3} discuss minimal conditions on a smoother for each problem, while this  paper presents one smoother $J I_\M$ for {\it all} schemes; the best-approximation for the dGFEM is a new result. The smoother  $J I_\M$ also allows a post-processing with a~priori error estimates in weaker and piecewise Sobolev norms.}

\medskip
\noindent Table~\ref{table:summary} summarizes the notation of spaces, bilinear forms,  and an operator for the four second-order methods for the biharmonic problem detailed in 
Subsection~\ref{subsection:ncfem} and  in Sections~\ref{sec:DGFEM}, \ref{sec:C0IP}, and \ref{sec:wopsip}. 

{\footnotesize{
 \begin{longtable}[c]{|p{8em}|p{5.5em}|p{7.5em}|p{7.5em}|p{5em}|} \hline
\diagbox{Notation}{Schemes}
&{Morley FEM}&{dGFEM}&{$C^0$IP}
&{WOPSIP}\\ \hline\hline
Section reference& Subsec~\ref{subsection:ncfem} & Sec~\ref{sec:DGFEM} & Sec~\ref{sec:C0IP} &Sec~\ref{sec:wopsip} 
\\ \hline 
$V_h$ & $\M(\T)$ & $P_2(\T)$ & $S^2_0(\T)$ & $P_2(\T)$ \\ \hline
 ${A}_h$ &  $a_\pw$ in \eqref{eq:a_pw}  & $ A_\dg$ in \eqref{eq:AhindG} & $A_\ip$ in \eqref{eq:AhinC0IP} & $A_{\w} $ in \eqref{eq:ap} \\ \hline
 $b_h$   & 0 & $-\Theta~\mathcal{J}~-~\mathcal{J}^*$~in~(6.2) 
 & 
 $-\Theta~\mathcal{J}~-~\mathcal{J}^*$~in~(6.2) 
  &  0\\ \hline
 $c_h$  & 0 & $c_\dg$ in \eqref{eq:ccdefineJsigma1sigma2} & $c_\ip$ in \eqref{eq:chinC0IP} & $c_\w$ in \eqref{eq:chinWOPSIP}\\ \hline
  $I_h: \M(\T) \rightarrow V_h$ & id &id & $I_\C $ in \eqref{eq:ic}& id\\\hline
\caption{\label{table:summary} Overview of notation for four discrete schemes with  discrete bilinear form  \eqref{eq:bilinear}}
\end{longtable}
}}

\subsection*{Contributions}
The main contributions of this paper are 

\begin{enumerate}
 \item[(a)] the design and analysis of a  {\it generalized Morley interpolation} operator $I_\M$ for piecewise smooth functions in $H^2(\T)$, 

\noindent
\item[(b)] the design of modified schemes for Morley FEM, dGFEM, $C^0$IP method, and a weakly over-penalized symmetric interior penalty (WOPSIP) method  for the biharmonic problem for data in $F \in H^{-2}(\Omega)$, 

\noindent \item[(c)] an abstract framework for the best-approximation property and weaker (piecewise) Sobolev norm estimates,

\noindent
\item[(d)]  a~priori error estimates in (piecewise) Sobolev norms  for the lowest-order nonstandard finite element methods for biharmonic plates,

\noindent
\item[(e)] an extension of the results of \cite{CarstensenGallistlNataraj2015} to an equivalence 
 \begin{align} \label{eq:eq}
 &\hnorm{u-u_\M}
\approx
 \hnorm{u-u_\ip}
\approx
 \hnorm{u-u_\dg}
\approx \norm{(1-\Pi_0) D^2 u}_{L^2(\Omega)}
\end{align}
{\it without data oscillations} for the modified schemes,
\noindent
\item[(f)] the proof
of the best approximation for the modified dGFEM  that extends \cite{veeser_zanotti3} and  \cite[Thm 4.3]{CarstensenGallistlNataraj2015}. 
\end{enumerate}

\begin{rem}[medius analysis]
 \noindent
 The quasi-optimality of nonconforming and discontinuous
Galerkin methods was established in the seminal paper \cite{Gudi10} for the original method up to data oscillations for $F \in L^2(\Omega)$. 
Arguments from a posteriori error analysis \cite{Verfurth13} give new insight in the  
consistency term from the Strang-Fix lemmas. The techniques in this paper 
circumvent any a posteriori error analysis and take advantage of  the extra benefits of the companion  operator $J$.
 \end{rem}
 
 \begin{rem}[smoother]
 The fundamental series of contributions \cite{veeser_zanotti1,veeser_zanotti2, veeser_zanotti3} on the quasi-optimality concerns best-approximation for a modified scheme with $F_h:= F\circ J$ for a smoother $J$. Elementary algebra indicates a key identity (of \eqref{a2} below) that is already mentioned in \cite[(5.15)]{BS05} and  makes the source term in the consistency disappear.
 \end{rem}
 
 \begin{rem}[extension to higher order]
A (general) Morley interpolation allows for a simultaneous  analysis of four lowest-order schemes, but appears to be restricted to piecewise quadratics at first glance. But the combination of $J I_\M$ as an averaging smoother  with higher-order bubble-smoothers shall enable applications to higher-order schemes as indicated in \cite{veeser_zanotti3}.
\end{rem} 

\begin{rem}[extension to 3D]
Although the plate problem is intrinsic two-dimensional, there are three-dimensional Morley finite elements with a recent companion operator \cite{CCP_new}
that guarantees the fundamental properties in 3D such that the abstract framework applies.
\end{rem}

\subsection*{Organization}
 The remaining parts of this paper are organised as follows.   {Section~\ref{prologue} provides  an abstract characterization of the best-approximation property that applies to  the various applications considered in this and future papers  \cite{NNCCGCRDS2021}}. Section~\ref{section:quasiorthogonality} presents  preliminaries, a nonconforming discretisation, introduces a {\it novel} generalized Morley interpolation operator for discontinuous functions, and states  a best-approximation \cite{NNCC2020} result for nonconforming discretisations  with data $F \in H^{-2}(\Omega)$. Section \ref{sec:novel_interpolation} proves a crucial equivalence result of two discrete norms for a piecewise $H^2(\T)$ function, and proves approximation properties for the generalized Morley interpolation operator. Section \ref{sec:abstract}  provides a  framework for dG methods and the proof of a best-approximation property  under a set of general assumptions. Section \ref{sec:lower_order} develops the abstract result for  a~priori error estimates in weaker and piecewise Sobolev norms. 
Sections~\ref{sec:DGFEM} and \ref{sec:C0IP} recall the dG and $C^0$IP schemes and verify the assumptions of Sections \ref{sec:abstract}  and \ref{sec:lower_order}  for the best-approximation result in the energy norm as well as  weaker and piecewise Sobolev norms without data oscillations. 
The paper concludes with the equivalence \eqref{eq:eq} of errors in Section~\ref{sec:comparisonandwopsip} and a proof of quasi-optimality up
 to penalty for the WOPSIP scheme in Section~\ref{sec:wopsip}. 
\subsection*{General Notation} Standard notation of  Lebesgue and Sobolev spaces, 
their norms, and $L^2$ scalar products  applies throughout the paper
such as the abbreviation $\|\bullet\|$ for $\|\bullet\|_{L^2(\Omega)}$. 
For real $s$, $H^s(\Omega)$ denotes the Sobolev space  associated with the  Sobolev-Slobodeckii semi-norm $|\bullet|_{H^{\boldmath{s}}(\Omega)}$ 
\cite{Grisvard}; $H^s(T):= H^s({\rm int}(T))$ abbreviates the Sobolev space with respect to the interior ${\rm int}(T)\ne \emptyset$ of a (compact) triangle $T$.  The closure of $D(\Omega)$ in $H^s(\Omega)$ is denoted  by $H^s_0(\Omega)$   and $H^{-s}(\Omega)$ is the dual of $H^s_0(\Omega)$. The triple norm  
$\trinl \bullet \trinr:=|\bullet|_{H^{2}(\Omega)}$ is the energy norm
 and  $\trinl \bullet \trinr_{\text{pw}}:=|\bullet|_{H^{2}(\cT)}:=\| D^2_\text{pw}\bullet\|$ 
 is its piecewise version with the piecewise Hessian $D_\text{pw}^2$. Given any function $v \in L^2(\omega)$, 
 define the integral mean $ \fint_\omega v \dx:= {1/ |\omega| }\int_\omega v \dx$; {where $|\omega|$ denotes the area of $\omega$}. 
The notation $A \lesssim B$ abbreviates $A \leq CB$ for some positive generic constant $C$, 
which depends only on $\Omega$ and the shape regulatity of $\T$;
 $A\approx B$ abbreviates $A\lesssim B \lesssim A$. 

{
\section{Prologue}\label{prologue}}
 This section characterizes the best-approximation property of a class of non-conforming finite element methods.
 The  biharmonic  problem is put in an abstract framework in real  Hilbert spaces $X$ and $Y$ 
 and  a bounded bilinear form $a:X \times Y \rightarrow {\mathbb R}$  satisfying 
 an inf-sup  condition. Given a right-hand side 
 $F \in Y^*$, the {\em exact problem} seeks $x\in X $ with 
 \[
 a(x,\bullet)=F\quad\text{in }Y.
 \]
 The {\em discrete problem} is put in an analog framework with 
 finite-dimensional real Hilbert spaces $X_h$ and $Y_h$ and 
 a  bilinear form $a_h:X_h \times Y_h \rightarrow {\mathbb R}$. 
 The discrete space $X_h$ (resp. $Y_h$) is {\it not} a subspace of $X$ (resp. $Y$) 
 in general, but $X$ and $X_h$ (resp.  $Y$ and $Y_h$) belong to one common  bigger
 vector space that gives rise to the sum $\widehat{X}=X+X_h$ 
 (resp. $\widehat{Y}=Y+Y_h$). It is not supposed that this is a direct sum,
 so the intersection $X\cap X_h$ (resp. $Y\cap Y_h$) may be non-trivial.
 We suppose that   $\widehat{X}$ and $\widehat{Y}$ are  real Hilbert 
 spaces with (complete) subspaces $X$,  $X_h$ and $Y$, $Y_h$.
 The linear and bounded map $Q\in L(Y_h;Y) $ links the 
 right-hand side $F\in Y^* $ of the exact problem  to the right-hand side
 $F_h:=F\circ Q\in Y_h^ *$ of the discrete problem.   The map $Q$ is called smoother in \cite{veeser_zanotti1, veeser_zanotti3, veeser_zanotti2} because it maps a (possibly) discontinuous function $y_h \in Y_h$ to a smooth function $Qy_h$ in applications. The resulting  {\em discrete problem }
 seeks $x_h\in X_h$ with 
\begin{align} \label{discrete:problem}
a_h(x_h,\bullet)= F(Q \bullet) \text{ in } Y_h.
\end{align}
We also suppose that the exact and discrete problems are well-posed and this means in particular that $\dim X_h=\dim Y_h<\infty $ and that the bounded bilinear forms 
$a$ and $a_h$  satisfy inf-sup conditions 
with positive constants $\alpha$ and $\alpha_h$
and are non-degenerate such that the associated linear and bounded operators 
$A\in L(X;Y^*)$ and  $A_h \in L(X_h; Y_h^*)$ are bijective;
the associated linear operators are defined by 
$Ax:=a(x,\bullet)\in Y^*$ for all $x\in X$ and  by 
$A_hx_h:=a_h(x_h,\bullet)\in Y_h^*$ for all $x_h\in X_h$.

\medskip \noindent

The general discussion in \cite{veeser_zanotti1,veeser_zanotti3,veeser_zanotti2},  leads to an optimal smoothing $Q= \Pi_{Y}|_{Y_h}$ for the orthogonal projection $\Pi_Y \in L(\widehat{Y})$ onto $Y$.
This is a global operation in general and hence infeasible for practical computations.  Notice carefully that all examples in  \cite{veeser_zanotti1, veeser_zanotti3, veeser_zanotti2} discuss $Q \in L(Y_h; Y)$ with $Qz = z$ for all $z \in Y_h \cap Y,$ abbreviated by
\begin{align} \label{q=id}
Q={\rm id} \text{ in }Y_h \cap Y.
\end{align}
This paper introduces a smoother $JI_\M$ for the discontinuous Galerkin schemes  that satisfies \eqref{q=id} and is quasi-optimal in the following sense with a constant $\Lambda_{\rm Q} \ge 0$ that is exclusively bounded in terms of the shape regularity of the underlying triangulations. 
\begin{defn}[quasi-optimal smoother] A linear bounded operator $Q \in L(Y_h; Y)$  is called a quasi-optimal smoother if there exists some $\Lambda_{\rm Q} \ge 0$ such that 
\begin{align}\label{quasioptimalsmoother}
\|y_h -Qy_h\|_{\widehat{Y}} \le \Lambda_{\rm Q} \|y_h-y\|_{\widehat{Y}} \text{  for all } y_h \in Y_h  \text{ and all } y \in Y.
\end{align}
\end{defn}
\noindent {The proofs of Lemma~\ref{lem:smoother} and \ref{lemma2.2}  below rely on compactness arguments whence in the appendix, the constants  $\Lambda_{\rm P}$  and   $\Lambda_{\rm Q}$ depend on the discrete space.  {\it The point is  that this paper designs a smoother in Section~\ref{sec:approximation_errors}   with  a constant  that does not depend on the mesh-size}. }
\begin{lem} \label{lem:smoother}The operator $Q \in L(Y_h; Y)$  is a quasi-optimal smoother if and only if \eqref{q=id} holds. 
\end{lem}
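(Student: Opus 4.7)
The equivalence is proved in two directions.

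The implication ``$Q$ quasi-optimal $\Rightarrow$ \eqref{q=id}'' is direct: given $y_h\in Y_h\cap Y$, the admissible choice $y:=y_h$ in \eqref{quasioptimalsmoother} yields $\|y_h-Qy_h\|_{\widehat{Y}}\le\Lambda_{\rm Q}\cdot 0=0$, so $Qy_h=y_h$.

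For the converse, assume $Q={\rm id}$ on $Y_h\cap Y$. Since for fixed $y_h\in Y_h$ the right-hand side of \eqref{quasioptimalsmoother} is minimized by taking the infimum over $y\in Y$, and since $Y$ is a closed subspace of the Hilbert space $\widehat{Y}$, that infimum equals $\|(1-\Pi_Y)y_h\|_{\widehat{Y}}$ with the orthogonal projection $\Pi_Y\in L(\widehat{Y})$ onto $Y$. Hence the target estimate is equivalent to
\[
\|y_h-Qy_h\|_{\widehat{Y}}\le \Lambda_{\rm Q}\,\|(1-\Pi_Y)y_h\|_{\widehat{Y}} \quad\text{for all } y_h\in Y_h.
\]
The plan is a compactness (finite-dimensional norm equivalence) argument. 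Define two seminorms on $Y_h$ by
\[
p(y_h):=\|y_h-Qy_h\|_{\widehat{Y}}, \qquad q(y_h):=\|(1-\Pi_Y)y_h\|_{\widehat{Y}}.
\]
Both are bounded linear images of $y_h$ into $\widehat{Y}$ followed by the norm and so are continuous on $Y_h$. The kernel analysis is the crux: $q(y_h)=0$ forces $y_h\in Y$ (closedness of $Y$), hence $y_h\in Y_h\cap Y$; the hypothesis \eqref{q=id} then gives $Qy_h=y_h$ and $p(y_h)=0$. Conversely, $p(y_h)=0$ means $y_h=Qy_h\in Y$, so $y_h\in Y_h\cap Y$ and thus $q(y_h)=0$. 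Therefore $p$ and $q$ descend to norms on the finite-dimensional quotient space $Y_h/(Y_h\cap Y)$.

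Since all norms on a finite-dimensional space are equivalent, there exists a constant $\Lambda_{\rm Q}\ge 0$ (depending on $Y_h$, $Y$, and $Q$) with $p\le\Lambda_{\rm Q}\,q$ on the quotient, hence on $Y_h$. This is exactly the displayed estimate above, and \eqref{quasioptimalsmoother} follows. The main obstacle is this quotient/compactness step; I expect it to be a one-liner once the kernels are identified, but it is the reason the constant $\Lambda_{\rm Q}$ in this abstract lemma depends on $Y_h$. (As the paper emphasizes, the contribution of Section~\ref{sec:approximation_errors} is to exhibit one concrete smoother $JI_\M$ for which $\Lambda_{\rm Q}$ is bounded uniformly in the mesh-size, purely in terms of shape regularity.)
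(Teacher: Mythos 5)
Your proof is correct and follows the same essential route as the paper's: the forward direction is immediate by choosing $y:=y_h$, and the converse rests on the finite dimension of $Y_h$ via a compactness argument. The only difference is packaging. The paper invokes the Peetre--Tartar lemma on $\widetilde Y := (1-Q)(Y_h)$, proving that $1-\Pi_Y$ is injective on $\widetilde Y$ (which uses \eqref{q=id} in exactly the same spot as your kernel analysis), whereas you pass to the quotient $Y_h/(Y_h\cap Y)$ and invoke equivalence of norms on a finite-dimensional space directly. These are two presentations of the same argument---indeed, once \eqref{q=id} holds the map $y_h\mapsto y_h-Qy_h$ has kernel $Y_h\cap Y$, so $\widetilde Y\cong Y_h/(Y_h\cap Y)$---and yours is arguably the more elementary, since Peetre--Tartar is overkill when the relevant space is finite-dimensional. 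Your closing remark that $\Lambda_{\rm Q}$ then depends on $Y_h$ and that the contribution of Section~\ref{sec:approximation_errors} is to exhibit a concrete $JI_\M$ with a mesh-independent constant matches the paper's emphasis exactly.
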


The (nonconforming) finite element method is characterized by the operator 
$M\in L(X;X_h)$ that maps any $x\in X$ to a right-hand side $F:=a(x,\bullet)\in Y^ *$
and then to the solution $Mx:=x_h=A_h^ {-1}(Q^* F)$ to \eqref{discrete:problem}, i.e.,
$
Mx:= A_h^ {-1}(a(x,Q\bullet))\in X_h 
$ 
for all $x\in X$,
or, in operator form,
\[
M:=A_h^ {-1}Q^* A\in L(X;X_h).
\]
In other words,  the subsequent diagram commutes. 

\centerline{
\resizebox{.6\textwidth}{!}{
\begin{tikzpicture}\label{commut}
  \node (A) {\tiny $X$};
 \node at (-0.15, -0.1)   (E) {};
  \node (B) [below=of A] {\tiny ${X_h}$};
\node (F) [left=of B]{};
 \node at (-0.15, -1.3)   (F) {};
  \node (C) [right=of A] {\tiny $Y^*$};
  \node (D) [right=of B] {\tiny $Y_h^*$};
 \draw[-stealth,dashed] (F) -- node[left] {\tiny $P$} (E);
  \draw[-stealth] (A)-- node[right] {\tiny $M$} (B);
  \draw[-stealth] (D)-- node [above] {\tiny ${A}_h^{-1}$} (B);
  \draw[-stealth] (A)-- node [above] {\tiny ${A}$} (C);
  \draw[-stealth] (C)-- node [right] {\tiny $Q^*$} (D);
\end{tikzpicture}}}

This diagram also depicts some (linear and bounded) operator $P:X_h\to X$ that will become a quasi-optimal smoother 
in the context of the best-approximation property of $M$ below. A synonym to the best-approximation property of $M$ is to say $M$ is quasi-opimal in the following sense. 
\medskip \noindent 
\begin{defn}[quasi-optimal] The above operator $M$  is said to be quasi-optimal if  
\[
{  \text{\bf (QO)}}   \quad \exists \; C_{\rm qo} >0  \; \; \forall x \in X \quad  \|x- Mx\|_{\widehat{X}} \le  C_{\rm qo} \min_{x_h \in X_h} \|x-x_h\|_{\widehat{X}}.
\]
\end{defn}


\medskip \noindent A first characterisation of {\bf (QO)} has been given in  \cite{veeser_zanotti1} in terms of 
\begin{align}\label{m=id}
M= {\rm id} \text{ in } X_h \cap X.
\end{align}

\begin{lem} \cite{veeser_zanotti1} \label{lemma2.2}Under the present notation, $\text{\bf (QO)}$ is equivalent to \eqref{m=id}.
\end{lem}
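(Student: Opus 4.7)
The plan is to prove the two directions of the equivalence separately. The forward implication $(\textbf{QO})\Rightarrow\eqref{m=id}$ is essentially a one-liner: for any $x\in X_h\cap X$, choosing $x_h:=x$ makes $\min_{x_h\in X_h}\|x-x_h\|_{\widehat X}=0$, so $(\textbf{QO})$ forces $\|x-Mx\|_{\widehat X}=0$, i.e.\ $Mx=x$.

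For the converse my strategy is to construct a bounded linear extension $\widetilde M\colon X+X_h\to X_h$ with $\widetilde M|_X=M$ and $\widetilde M|_{X_h}=\mathrm{id}_{X_h}$. Once such an operator is in hand, the algebraic identity
\[
x-Mx=(x-x_h)-\widetilde M(x-x_h)=(\mathrm{id}-\widetilde M)(x-x_h),\qquad x\in X,\ x_h\in X_h,
\]
yields $\|x-Mx\|_{\widehat X}\le(1+\|\widetilde M\|)\|x-x_h\|_{\widehat X}$, and the infimum over $x_h\in X_h$ delivers $(\textbf{QO})$ with $C_{\mathrm{qo}}=1+\|\widetilde M\|$.

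The natural construction is the formula $\widetilde M(x+x_h):=Mx+x_h$. This is precisely where the hypothesis \eqref{m=id} does its work: well-definedness amounts to showing that whenever $x+x_h=x'+x_h'$ with $x,x'\in X$ and $x_h,x_h'\in X_h$, the common element $x-x'=x_h'-x_h$ lies in $X\cap X_h$; \eqref{m=id} then gives $M(x-x')=x-x'$, which rearranges exactly to $Mx+x_h=Mx'+x_h'$. The two restriction properties are built into the formula.

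The main obstacle is the boundedness of $\widetilde M$: algebraically the definition is automatic, but continuity demands topological input from the Hilbert space structure and the finite dimensionality of $X_h$. I would argue as follows. Since $X$ is closed in $\widehat X$ and $X_h$ is finite-dimensional, $X+X_h$ is closed in $\widehat X$. Pick any algebraic complement $W$ of $X\cap X_h$ inside the finite-dimensional space $X_h$, so that $X+X_h=X\oplus W$. Because $W$ is finite-dimensional and $X$ is closed, the projection $\pi\colon X+X_h\to W$ along $X$ is bounded, and the representation $\widetilde Mz=M(z-\pi z)+\pi z$ yields the quantitative bound $\|\widetilde M\|\le\|M\|\,\|\mathrm{id}-\pi\|+\|\pi\|$. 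An equivalent route that avoids choosing a complement is to invoke the open mapping theorem on the bounded surjection $X\times X_h\to X+X_h$, $(x,x_h)\mapsto x+x_h$, producing decompositions $z=x+x_h$ with $\|x\|+\|x_h\|\lesssim\|z\|_{\widehat X}$ and hence the same boundedness conclusion.
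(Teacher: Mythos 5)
Your proof is correct, but it takes a genuinely different route from the paper's. The paper argues via the Peetre--Tartar lemma: it sets $\widetilde X:=(1-M)X$, observes that $A:=1-\Pi_{X_h}$ is injective on $\widetilde X$ precisely because of~\eqref{m=id} while $B:=\Pi_{X_h}$ is compact (finite rank), and extracts a constant $\gamma>0$ with $\gamma\|(1-M)x\|_{\widehat X}\le\|x-\Pi_{X_h}x\|_{\widehat X}$. You instead build an explicit bounded linear extension $\widetilde M$ of $M$ to $\widehat X=X+X_h$ with $\widetilde M|_{X_h}=\mathrm{id}$, using~\eqref{m=id} exactly once, to verify that the formula $\widetilde M z=M(z-\pi z)+\pi z$ restricts to the identity on $X_h$ (write $x_h=u+w$ with $u\in X\cap X_h$, $w\in W$; then $\widetilde M x_h=Mu+w=u+w=x_h$). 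The algebraic identity $x-Mx=(\mathrm{id}-\widetilde M)(x-x_h)$ then gives \textbf{(QO)} with $C_{\mathrm{qo}}=1+\|\widetilde M\|$. The structural content is the same --- both arguments lean on the finite dimensionality of $X_h$, yours through the boundedness of the projection $\pi$ onto the finite-dimensional complement $W$, the paper's through compactness of $\Pi_{X_h}$ --- and neither yields a mesh-independent bound for $C_{\mathrm{qo}}$ at this abstract level, which the paper explicitly acknowledges and defers to the quantitative Theorem~\ref{quasioptimal}. Your version has the modest advantage of exhibiting a concrete operator realizing the quasi-optimality constant, which makes the dependence on the geometry of $X$ versus $X_h$ slightly more transparent than the nonconstructive $\gamma$ from Peetre--Tartar.
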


The above lemmas characterize $C_{\rm qo}$ and $\Lambda_{\rm Q}$ by a compactness argument and it remains to control $C_{\rm qo}$ and $\Lambda_{\rm Q}$ in terms of mesh-size independent bounds in applications.  This paper designs in Section~\ref{sec:approximation_errors} a smoother in the spirit of \cite{veeser_zanotti1, veeser_zanotti2,veeser_zanotti3} based on earlier work in the context of a posteriori error control \cite{CC2005,CCHu2007, CCDGJH14} and adaptive mesh-refinement 
\cite{cc_hella_18,Carstensen2019, aCCP,CCP_new}. The outcome is a quasi-optimal smoother $P \in L(X_h; X)$   with a constant $\Lambda_{\rm P}$  that depends only on the shape regularity of the underlying finite element mesh and 
\begin{align}\label{quasioptimal}
\|x_h-Px_h \|_{\widehat X}   \le  \Lambda_{\rm P}  \|x_h-x\|_{\widehat X} \text{ for all } x_h \in X_h \text{ and all } x \in X.   
\end{align}
{The proof of the following characterization of best-approximation shall be given in the appendix.}
\begin{thm}\label{quasioptimal}
Suppose  $P \in L(X_h; X)$ and $  \Lambda_{\rm P} $ satisfy \eqref{quasioptimal}. Then {\bf (QO)} is equivalent to the existence of $\Lambda_{\rm H}>0$ with 
\begin{align*}
{  \text{\bf (H)}}&\quad  {a}_h(x_h, y_h)  -  {a}(P x_h,Qy_h)   \le \Lambda_{\rm H} \|x_h-Px_h\|_{\widehat{X}} \|y_h\|_{Y_h} \text{ for all } x_h \in X_h \text{ and } y_h \in Y_h.  
\end{align*}
\end{thm}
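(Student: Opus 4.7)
The plan is to prove both implications of the equivalence using the defining identity $a_h(Mx,y_h)=F(Qy_h)=a(x,Qy_h)$ for every $x\in X$ and $y_h\in Y_h$, together with the inf-sup stability of $a_h$, the boundedness of $a$, and the quasi-optimal smoother bound \eqref{quasioptimal} for $P$. Throughout I identify the norms on $X_h$ and $Y_h$ with their restrictions from $\widehat{X}$ and $\widehat{Y}$; the key algebraic move in both directions is inserting $\pm a(Px_h,Qy_h)$ so that condition \textbf{(H)} precisely captures the consistency residual that survives after smoothing.

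\textbf{Direction (H) $\Rightarrow$ (QO).} Fix $x\in X$ and an arbitrary $x_h\in X_h$. Write $x-Mx=(x-x_h)+(x_h-Mx)$ and only the second summand needs work. Since $x_h-Mx\in X_h$, the inf-sup condition for $a_h$ gives
\[
\alpha_h\,\|x_h-Mx\|_{\widehat{X}}\le \sup_{y_h\in Y_h\setminus\{0\}}\frac{a_h(x_h-Mx,y_h)}{\|y_h\|_{Y_h}}.
\]
Using $a_h(Mx,y_h)=a(x,Qy_h)$ and inserting $\pm a(Px_h,Qy_h)$ produces the decomposition
\[
a_h(x_h-Mx,y_h)=\bigl[a_h(x_h,y_h)-a(Px_h,Qy_h)\bigr]+a(Px_h-x,Qy_h).
\]
The first bracket is controlled by assumption \textbf{(H)} by $\Lambda_{\rm H}\|x_h-Px_h\|_{\widehat{X}}\|y_h\|_{Y_h}$; the second term is bounded by $\|a\|\,\|Q\|\,\|Px_h-x\|_{\widehat{X}}\|y_h\|_{Y_h}$ using boundedness of $a$ and of the smoother $Q$. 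Next I invoke \eqref{quasioptimal} to obtain $\|x_h-Px_h\|_{\widehat{X}}\le\Lambda_{\rm P}\|x_h-x\|_{\widehat{X}}$ and the triangle inequality to get $\|x-Px_h\|_{\widehat{X}}\le(1+\Lambda_{\rm P})\|x-x_h\|_{\widehat{X}}$. Combining these estimates and taking the infimum over $x_h\in X_h$ yields \textbf{(QO)} with
\[
C_{\rm qo}\le 1+\alpha_h^{-1}\bigl(\Lambda_{\rm H}\Lambda_{\rm P}+\|a\|\,\|Q\|(1+\Lambda_{\rm P})\bigr).
\]

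\textbf{Direction (QO) $\Rightarrow$ (H).} Let $x_h\in X_h$ and $y_h\in Y_h$. Applying the definition of $M$ to $Px_h\in X$ gives $a(Px_h,Qy_h)=a_h(M(Px_h),y_h)$, so
\[
a_h(x_h,y_h)-a(Px_h,Qy_h)=a_h\bigl(x_h-M(Px_h),\,y_h\bigr)\le\|a_h\|\,\|x_h-M(Px_h)\|_{\widehat{X}}\|y_h\|_{Y_h}.
\]
Split $x_h-M(Px_h)=(x_h-Px_h)+(Px_h-M(Px_h))$. The first piece is already in the desired form. For the second, \textbf{(QO)} applied to $x:=Px_h\in X$ (with the test $z_h:=x_h\in X_h$) yields
\[
\|Px_h-M(Px_h)\|_{\widehat{X}}\le C_{\rm qo}\min_{z_h\in X_h}\|Px_h-z_h\|_{\widehat{X}}\le C_{\rm qo}\|Px_h-x_h\|_{\widehat{X}}.
\]
Adding both contributions delivers \textbf{(H)} with $\Lambda_{\rm H}\le\|a_h\|(1+C_{\rm qo})$.

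\textbf{Expected obstacle.} The only delicate issue is norm bookkeeping: verifying that the intrinsic norms on $X_h$ and $Y_h$ used by the inf-sup constant $\alpha_h$ and by $\|a_h\|$ are consistent with the restriction of $\|\cdot\|_{\widehat{X}}$ and $\|\cdot\|_{\widehat{Y}}$, so that \eqref{quasioptimal} and \textbf{(H)} can be chained with the inf-sup bound without losing mesh-independence. Once this identification is justified in the abstract setting (as is implicit in the surrounding framework), both implications follow from the above algebraic manipulations without any further compactness argument, in contrast to Lemma~\ref{lem:smoother} and Lemma~\ref{lemma2.2}.
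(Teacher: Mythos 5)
Your proof is correct and follows essentially the same route as the paper's: in the (H)\,$\Rightarrow$\,(QO) direction you use the identity $a_h(Mx,y_h)=a(x,Qy_h)$, the inf-sup bound, the decomposition via $\pm a(Px_h,Qy_h)$, and the smoother bound \eqref{quasioptimal}; in the (QO)\,$\Rightarrow$\,(H) direction you use $a(Px_h,Qy_h)=a_h(MPx_h,y_h)$, a triangle inequality, and (QO) applied to $Px_h$. The only cosmetic differences are that the paper plugs in $x_h^*=\Pi_{X_h}x$ directly rather than taking the infimum at the end, and your bound $\|a_h\|(1+C_{\rm qo})$ for $\Lambda_{\rm H}$ is in fact typographically cleaner than the paper's $\|Q^*A_h\|(1+C_{\rm qo})$.
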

\noindent {In particular,  if {\bf (H)} holds,  then {\bf (QO)} follows with  a constant $C_{\rm qo}$ that depends solely on $\alpha_h,$  $\Lambda_{\rm H},$  $\Lambda_{\rm P},$ $\|Q\|$, and $\|A\|$.}

\noindent The next theorem presents  a  key estimate that is crucial for goal-oriented error control and duality arguments for weaker norm estimates.  {The proof and the dependence of contants are presented in  the appendix.}The motivation for ${\widehat{\text{\bf (QO)}}}$ is exemplified in Theorem~\ref{weakapriori} below. 
\begin{thm}\label{thm:qo2}
Suppose $P$ and $Q$ are quasi-optimal smoothers with \eqref{quasioptimalsmoother}-\eqref{quasioptimal} and suppose {\bf (QO)}.  Then  the existence of $\widehat{C_{\text{\rm qo}}} >0$ with
\begin{align*}
\hspace{-0.2in}{\widehat{\text{\bf (QO)}}} \; \; &  
a(x- PMx, y )  \le \widehat{C_{\text{\rm qo}}} \|x-Mx\|_{\widehat{X}} \|y-y_h\|_{\widehat{Y}} \; \text{ for all }  x \in X, \; y \in Y,  \text{ and } y_h \in Y_h   \quad
\end{align*}
is equivalent to the existence of  $\widehat{\Lambda_{\rm H}}>0 $ with 
\begin{align*}
{ \widehat{ \text{\bf (H)}}} \; &  {a}_h(x_h', y_h)  -  {a}(P x_h',Qy_h)   \le \widehat{\Lambda_{\rm H}}  \|x_h'-Px_h'\|_{\widehat{X}} \|y_h-Qy_h\|_{\widehat{Y}} \text{ for all } x_h' \in X_h',
\text{ and }  y_h \in Y_h.
\end{align*}
\end{thm}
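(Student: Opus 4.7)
The plan is to prove the two implications separately.

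\textbf{Forward direction ($\widehat{\text{\bf (H)}} \Rightarrow \widehat{\text{\bf (QO)}}$).} For $x \in X$, $y \in Y$, $y_h \in Y_h$, the splitting
\[
 a(x - PMx, y) = a(x - PMx, y - Qy_h) + a(x - PMx, Qy_h)
\]
reduces the task to bounding each summand. The defining identity $a(z, Q\bullet) = a_h(Mz, \bullet)$ on $Y_h$ of $M$ rewrites the second summand as $a_h(Mx, y_h) - a(PMx, Qy_h)$, which is exactly the consistency error bounded by $\widehat{\text{\bf (H)}}$ (with $x_h' = Mx$) via $\widehat{\Lambda_{\rm H}} \|Mx - PMx\|_{\widehat{X}} \|y_h - Qy_h\|_{\widehat{Y}}$. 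Continuity of $a$ bounds the first summand by $\|a\| \|x - PMx\|_{\widehat{X}} \|y - Qy_h\|_{\widehat{Y}}$. Triangle inequalities combined with the quasi-optimal smoother estimates \eqref{quasioptimalsmoother} and \eqref{quasioptimal} then collapse $\|Mx - PMx\|$, $\|x - PMx\|$, $\|y_h - Qy_h\|$, $\|y - Qy_h\|$ into multiples of $\|x - Mx\|_{\widehat{X}}$ and $\|y - y_h\|_{\widehat{Y}}$, producing $\widehat{C_{\text{\rm qo}}} = \widehat{\Lambda_{\rm H}}\Lambda_{\rm P}\Lambda_{\rm Q} + \|a\|(1 + \Lambda_{\rm P})(1 + \Lambda_{\rm Q})$.

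\textbf{Backward direction ($\widehat{\text{\bf (QO)}} \Rightarrow \widehat{\text{\bf (H)}}$).} Given $x_h' \in X_h$ and $y_h \in Y_h$, the same identity $a(z, Q\bullet) = a_h(Mz, \bullet)$ yields
\[
 a_h(x_h', y_h) - a(Px_h', Qy_h) = a_h\bigl((I - T) x_h', y_h\bigr)
\]
with $T := MP$ acting on $X_h$. I apply $\widehat{\text{\bf (QO)}}$ at $x := Px_h' \in X$, $y := Qy_h \in Y$, and test element $y_h' := y_h$; invoking {\bf (QO)} to replace $\|Px_h' - MPx_h'\|_{\widehat{X}}$ by $C_{\rm qo}\|x_h' - Px_h'\|_{\widehat{X}}$, this yields
\[
 a_h\bigl(T(I - T) x_h', y_h\bigr) \le \widehat{C_{\text{\rm qo}}}\,C_{\rm qo}\,\|x_h' - Px_h'\|_{\widehat{X}}\,\|y_h - Qy_h\|_{\widehat{Y}}.
\]
The decomposition $(I - T) x_h' = T(I - T) x_h' + (I - T)^2 x_h'$ then leaves the residual $a_h((I - T)^2 x_h', y_h)$ to be absorbed into the same factored form. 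For this I would iterate $\widehat{\text{\bf (QO)}}$ with the shifted argument $(I - T)^k x_h' \in X_h$ in place of $x_h'$ and telescope, tracking that the norms $\|(I - T)^k x_h' - P(I - T)^k x_h'\|_{\widehat{X}}$ all collapse to multiples of $\|x_h' - Px_h'\|_{\widehat{X}}$ through \eqref{quasioptimal} and {\bf (H)} from Theorem~\ref{quasioptimal}.

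\textbf{Main obstacle.} The telescoping on the orbit $\{T^k x_h'\}_{k \ge 0}$ in $X_h$ is the heart of the backward direction: {\bf (H)} alone furnishes only the coarse factor $\|y_h\|_{Y_h}$, and the refined factor $\|y_h - Qy_h\|_{\widehat{Y}}$ required by $\widehat{\text{\bf (H)}}$ can emerge only from the combination with $\widehat{\text{\bf (QO)}}$. Since $T$ need not contract in general, convergence of the telescoped series relies on the finite-dimensional structure of $X_h$ and a compactness argument of the type used for Lemma~\ref{lem:smoother} and Lemma~\ref{lemma2.2}; the explicit bookkeeping of $\widehat{\Lambda_{\rm H}}$ in terms of $\alpha_h$, $\|a\|$, $\Lambda_{\rm P}$, $\Lambda_{\rm Q}$, $C_{\rm qo}$, and $\widehat{C_{\text{\rm qo}}}$ is the delicate step carried out in the appendix.
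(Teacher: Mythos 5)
Your proof of the implication $\widehat{\text{\bf (H)}}\Rightarrow\widehat{\text{\bf (QO)}}$ is correct and coincides with the paper's argument: choose $x_h'=Mx\in X_h'=\mathcal{R}(M)$, split
\[
a(x-PMx,y)=a(x-PMx,y-Qy_h)+a(x-PMx,Qy_h),
\]
bound the second piece by $\widehat{\text{\bf (H)}}$ (via the identity $a_h(Mx,\bullet)=a(x,Q\bullet)$ on $Y_h$), the first by continuity of $a$, and absorb the auxiliary norms with \eqref{quasioptimalsmoother}--\eqref{quasioptimal}. The constant you report matches the paper's.

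The converse, $\widehat{\text{\bf (QO)}}\Rightarrow\widehat{\text{\bf (H)}}$, is where the genuine gap lies, and you partially sense it yourself. Your reduction $a_h(x_h',y_h)-a(Px_h',Qy_h)=a_h((I-T)x_h',y_h)$ with $T:=MP$ is fine, but the proposed telescoping over the orbit $\{(I-T)^k x_h'\}$ cannot work: $T$ is not a contraction, $(I-T)^2x_h'$ is not smaller than $(I-T)x_h'$ in any controlled way, and $\widehat{\text{\bf (QO)}}$ is stated for $x\in X$, so you cannot feed $(I-T)^kx_h'\in X_h$ into it directly without first lifting to $X$, at which point you are back where you started. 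The compactness arguments behind Lemmas~\ref{lem:smoother}--\ref{lemma2.2} establish a single a~priori bound via Peetre--Tartar; they do not furnish the geometric decay an iterative scheme would need. The missing idea is to avoid iteration altogether by producing a \emph{right inverse} $P'\in L(X_h';X)$ of $M$: with $N:=\ker M$ and $S:=(M|_{N^\perp})^{-1}:X_h'\to N^\perp$, set $P':=\Pi_N+S$, so that $MP'=\mathrm{id}$ on $X_h'$. Then $a_h(x_h',y_h)=a_h(MP'x_h',y_h)=a(P'x_h',Qy_h)$, and a \emph{single} application of $\widehat{\text{\bf (QO)}}$ with $x:=P'x_h'$, $y:=Qy_h$, and the same $y_h$ yields
\[
a_h(x_h',y_h)-a(Px_h',Qy_h)=a(P'x_h'-PMP'x_h',Qy_h)\le \widehat{C_{\rm qo}}\,\|P'x_h'-x_h'\|_{\widehat X}\,\|y_h-Qy_h\|_{\widehat Y},
\]
since $\|P'x_h'-MP'x_h'\|_{\widehat X}=\|P'x_h'-x_h'\|_{\widehat X}$. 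The remaining ingredient, which your outline has no substitute for, is the comparison $\|x_h'-P'x_h'\|_{\widehat X}\le\Lambda_{\rm P'}\|x_h'-Px_h'\|_{\widehat X}$; the paper obtains it from a Pythagoras decomposition along $N\perp N^\perp$ together with {\bf (QO)}. This delivers $\widehat{\text{\bf (H)}}$ with $\widehat{\Lambda_{\rm H}}=\widehat{C_{\rm qo}}\Lambda_{\rm P'}$, with no series and no residual to absorb.
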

\noindent {In particular,  if ${ \widehat{ \text{\bf (H)}}}$ holds, $ {\widehat{\text{\bf (QO)}}}$ follows with a constant 
 $\widehat{C_{\text{\rm qo}}} $
that depends solely on $\|a\|$,  $\Lambda_2',$  $\Lambda_{\rm P},$ and $\Lambda_{\rm Q}$. }

The a priori error estimates in weaker Sololev norms (weaker than the energy norm) are a corollary of Theorem \ref{thm:qo2} and the elliptic regularity,  the latter is written in an abstract form by the assumption that $X_s$ and $Y_s$ are two Hilbert spaces with $X \subset X_s$ and $Y_s \subset Y$ such that 
\[\hspace{-2.6in} {\text{\bf (R)}} \; \; \exists C_{\rm reg} >0 \; \forall F \in X_s^* \; \; \|A^{-*}F\|_{Y_s} \le C_{\rm reg} \|F\|_{X_s^*}
\]
for the solution $y:=A^{-*} F \in Y_s \subset Y$ to $a(\bullet, y) = F \in X_s^* \subset X^*$. 
\begin{thm}[weak a priori] \label{weakapriori} Under the assumptions of Theorem \ref{thm:qo2},  $ {\widehat {\text{\bf (QO)}}}$ and {\bf(R)} imply 
\[
\|x-PMx\|_{X_s} \le \widehat{C_{\text{\rm qo}}} \|x-Mx\|_{\widehat X} \sup_{\stackrel{y \in Y_s}{\|y\|_{Y_s} \le C_{\rm reg} }}  
\inf_{y_h \in Y_h} \|y-y_h\|_{\widehat{Y}} \text{ \rm for all } x \in X. 
\]
\end{thm}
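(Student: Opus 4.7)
The plan is to apply an Aubin--Nitsche type duality argument: represent the $X_s$-norm on the left-hand side as a supremum of linear functionals in $X_s^*$, use the abstract regularity assumption {\bf (R)} to convert each such functional into a dual problem with solution $y\in Y_s$, and then insert the key estimate ${\widehat{\text{\bf (QO)}}}$ from Theorem~\ref{thm:qo2}.

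More concretely, I would proceed as follows. Fix $x\in X$. Since $x-PMx\in X\subset X_s$, the norm duality
\[
\|x-PMx\|_{X_s}=\sup_{\substack{F\in X_s^*\\ \|F\|_{X_s^*}\le 1}} F(x-PMx)
\]
reduces the problem to estimating $F(x-PMx)$ for each admissible $F$. For such an $F$, set $y:=A^{-*}F\in Y_s$, so that $a(\bullet,y)=F$ in $X$, and note that {\bf (R)} delivers the norm bound $\|y\|_{Y_s}\le C_{\rm reg}\|F\|_{X_s^*}\le C_{\rm reg}$. Writing $F(x-PMx)=a(x-PMx,y)$ turns the left-hand side into the bilinear form to which ${\widehat{\text{\bf (QO)}}}$ applies.

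Now I would invoke ${\widehat{\text{\bf (QO)}}}$ with this particular $y$ and an arbitrary $y_h\in Y_h$ to obtain
\[
a(x-PMx,y)\le \widehat{C_{\text{\rm qo}}}\,\|x-Mx\|_{\widehat X}\,\|y-y_h\|_{\widehat Y}.
\]
Taking the infimum over $y_h\in Y_h$ on the right-hand side and then the supremum over those $y\in Y_s$ with $\|y\|_{Y_s}\le C_{\rm reg}$ (which, together with the above norm bound, dominates the supremum over $F\in X_s^*$ with $\|F\|_{X_s^*}\le 1$) yields the claimed inequality.

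There is no real obstacle here: the argument is a direct duality step once ${\widehat{\text{\bf (QO)}}}$ is available. The only detail worth double-checking is that the $F$'s for which the supremum is attained are exactly those realized through $y=A^{-*}F$ with $\|y\|_{Y_s}\le C_{\rm reg}$; this is precisely what {\bf (R)} encodes, and it is why the supremum on the right-hand side is taken over the ball $\{y\in Y_s:\|y\|_{Y_s}\le C_{\rm reg}\}$ rather than over the unit ball of $Y_s$.
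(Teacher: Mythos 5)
Your proof is correct and takes essentially the same route as the paper's: a duality argument that realizes the $X_s$-norm via an $X_s^*$-functional, converts it to $a(x-PMx,y)$ through \textbf{(R)}, and then applies $\widehat{\textbf{(QO)}}$. The paper simply phrases the first step via a norm-attaining functional from a Hahn--Banach corollary rather than a supremum over the unit ball, which is an inessential difference.
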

\begin{proof} Given $x-PMx \in X \subset X_s$, a corollary of the Hahn-Banach extension theorem leads to some $F \in X_s^* \subset X^*$ with norm $\|F\|_{X_s^*} \le 1$ in $X_s^*$ and $\|x-PMx\|_{X_s} =F(x-PMx)$. The dual solution $y \in Y$ to $F=a(\bullet, y) \in X^*$ 
satisfies {{\bf (R)}} and $ {\widehat {\text{\bf (QO)}}}$ leads to 
\[ \|x-PMx\|_{X_s} =a(x-PMx,y) \le \widehat{C_{\text{\rm qo}}}   \|x-Mx\|_{\widehat X}  \|y-y_h\|_{\widehat{Y}}
\]
for any $y_h \in Y_h$. This and $\|y\|_{Y_s} \le C_{\rm reg} \|F\|_{X_s^*} \le  C_{\rm reg} $ conclude the proof. 
\end{proof}
\begin{example}[standard] For the $m$-harmonic operator $A=(-1)^m \Delta^m$ and $X=H^m_0(\Omega)=Y$,  {\bf (R)} holds for 
$X_s=H_0^{m-s}(\Omega), \: Y_s= H^{m+s}(\Omega) $ and $1/2 \le s \le 1$, $m=1$ or 2. Typical first-order approximation properties of the discrete finite element spaces result in 
\[
 \displaystyle \sup_{\stackrel{y \in Y_s}{\|y\|_{Y_s} \le C_{\rm reg} }}  
\inf_{y_h \in Y_h} \|y-y_h\|_{\widehat{Y}} =O(h_{\rm max}^s) 
\]
in terms of the maximal mesh-size $h_{\rm max}$ of the underlying finite element mesh.  \qed
\end{example}

\begin{rem}[best-approximation constant]
The paper   \cite{veeser_zanotti1} gives a formula  for the best-approximation constant $C_{\rm qo}$ for some slightly simpler problem in one Hilbert space.  
\end{rem}

\begin{rem}[injective smoother]
Under the above notation  $Q\in L(Y_h;Y)$ is injective if and only if $M\in L(X;X_h)$ is surjective  \cite{veeser_zanotti1}. Then there exists a right-inverse $P\in L(X_h;X)$
to $M$ and {\bf (H)} holds  with $\Lambda_{\rm H}=0$ (this follows with the arguments of the proof of Theorem 2.5 for $P'$ that is in fact a quasi-optimal smoother owing to \eqref{eqn:pxh-xh}.) Consequently,  the discrete scheme is  equivalent to a conforming Petrov-Galerkin scheme.   
\end{rem}
\begin{rem}[non injective smoother]
In case    $Q\in L(Y_h;Y)$ is {\em not} injective, the discrete problem may reduced to the range $X_h':=\mathcal{R}(M)$  of $M$
and the orthogonal complement $Y_h'$ of the kernel of $Q$ in $Y_h$. However, the explicit computation of the reduced discrete spaces $X_h'$ and $Y_h'$ 
may be costly and hence this paper outlines a general analysis that allows non-injective quasi-optimal smoothers. 
\end{rem}

\begin{example}[smoother for Morley]
For the standard Morley interpolation operator 
$I_\M:H^2_0(\Omega) \rightarrow \M(\T)$ and a companion operator 
$J: \M(\T) \rightarrow H^2_0(\Omega)$
 (cf. Lemma~\ref{companion}  below for details) the smoother
$Q=JI_\M$ is injective because  $J$ is a right-inverse of $I_\M$. 
\end{example}

\begin{example}[smoother for dG]
This paper advertises a  smoother $Q:=JI_\M$ for a (generalized) Morley interpolation $I_\M:(P_2(\T) + H^2_0(\Omega)) \rightarrow \M(\T)$ (cf.  \eqref{defccMorleyinterpolation} 
below  for details) followed by a companion operator $J$ from the previous example
 for the dG FEM. Then   $\text{ dim } P_2(\T) = 6 |\T|$ is strictly larger than $\text{ dim } \M(\T)=|{\mathcal V}(\Omega)|+|{\mathcal E}(\Omega) |$; whence $Q$ cannot be injective.

The situation for the $C^0$ IP with the discrete space $S^2_0(\T)$ 
(of the same dimension as $\M(\T)$) 
is more involved and is discussed in more details in Section ~\ref{sec:C0IP} below.
\end{example}

\section[dG4plates]{Preliminaries}\label{section:quasiorthogonality}
\subsection{Continuous model problem} \label{subsection:modelproblem}
Suppose $u \in V:= H^2_0(\Omega)$ solves the biharmonic equation $ \Delta^2 u=F$ for a given right-hand side $F \in V^*\equiv H^{-2}(\Omega)$
in a  planar bounded Lipschitz domain $\Omega$ with polygonal boundary $\partial \Omega$. The   weak form of this equation reads 
\begin{align} \label{eq:weakabstract}
a(u,v)= F(v) \quad\text{for all }v\in V
\end{align}
with the scalar product $a(v,w):= \int_\Omega D^2 v : D^2 w \dx $ for all $v,w \in V$. 
It is well known 
that \eqref{eq:weakabstract} has a unique solution $u$ and elliptic regularity  \cite{agmon2010,BlumRannacher,gilbargtrudinger2001,necas67}  holds in the sense that 
$F \in  H^{-s}(\Omega)$  implies $u \in V\cap H^{4-s} (\Omega)$ for all $s$ with $2-\sigma_{\rm reg} \le s \le 2$ with the index of elliptic regularity $\sigma _{\rm reg}>0$.
 The lowest-order nonconforming finite element schemes suggest a linear convergence rate in the energy norm for a  solution $u\in H^t(\Omega)$ 
at most for all $t\ge 3$. Therefore    $\sigma :=\min\{ 1, \sigma _{\rm reg}\}$    is fixed throughout this paper and exclusively  depends on $\Omega$. 
 The regularity is frequently employed in the following formulation. 

\begin{example}[regularity]\label{rem:dual}
There exists a  constant  $0 < \sigma \le 1$  such that
{ $F\in H^{-s}(\Omega)$  with $2-\sigma \le s\le 2$
satisfies  $ u \in V \cap H^{4-s}(\Omega) $ and }
\begin{align}
 \|u \|_{ H^{4-s}(\Omega)}&\le C_{\rm reg}(s)  \| F \|_{H^{-s}(\Omega)} \label{eq:extrareg} 
\end{align}
\noindent for some  constant $  C_{\rm reg}(s) <\infty$, which depends on $\Omega$ and $s$. (The dependence on $s$ results from the equivalence of Sobolev norms that may 
depend on the index $s$ in general.)
\end{example}

It is true that  pure Dirichlet boundary conditions in the model example lead to $\sigma >1/2$ and then allow for a control of the traces $D^2u$ in the jump terms. 
This paper circumvents this argument and all the results hold for  $\sigma \ge 0$. The new discrete analysis is therefore much more flexible and allows for generalizations of
the model problem e.g. for mixed and boundary conditions of less smoothness.

\begin{center}
\begin{figure*}[t]
\begin{minipage}[h]{0.5\linewidth}
\begin{center}
\begin{tikzpicture}
\node[regular polygon, regular polygon sides=3, draw, minimum size=5cm]
(m) at (0,0) {};

\fill [black] (m.corner 1) circle (2pt);

\put(-3,-5){$T$}

\fill [black] (m.corner 2) circle (2pt);
\fill [black] (m.corner 3) circle (2pt);

\draw [-latex, thick] (m.side 1) -- ($(m.side 1)!0.5!90:(m.corner 1)$);
\draw [-latex, thick] (m.side 2) -- ($(m.side 2)!0.5!90:(m.corner 2)$);
\draw [-latex, thick] (m.side 3) -- ($(m.side 3)!0.5!90:(m.corner 3)$);
\end{tikzpicture}
\end{center}
\end{minipage}
\begin{minipage}[h]{0.5\linewidth}
\begin{center}
\begin{tikzpicture}

\node[regular polygon, regular polygon sides=3, draw, minimum size=5cm]
(m) at (0,0) {};
 
\fill [black] (m.corner 1) circle (2pt);

\draw [thick] (0,0) -- (0,2.5);
\draw (0,0) -- (-2.1,-1.2);
\draw (0,0) -- (2.1,-1.2);

\draw (0,2.45) circle (0.2cm);
\draw (-2.2,-1.3) circle (0.2cm);
\draw (2.2,-1.3) circle (0.2cm);

\draw (0,0) circle (2pt);

\put(1.2,0.6){${\rm mid}(T)$}


\fill [black] (m.corner 2) circle (2pt);
\fill [black] (m.corner 3) circle (2pt);

\draw [-latex, thick] (m.side 1) -- ($(m.side 1)!0.5!90:(m.corner 1)$);
\draw [-latex, thick] (m.side 2) -- ($(m.side 2)!0.5!90:(m.corner 2)$);
\draw [-latex, thick] (m.side 3) -- ($(m.side 3)!0.5!90:(m.corner 3)$);
\end{tikzpicture}
\end{center}
\end{minipage}
\caption{(a) Morley (left) and (b) HCT (right) finite element}\label{fig}
\end{figure*}
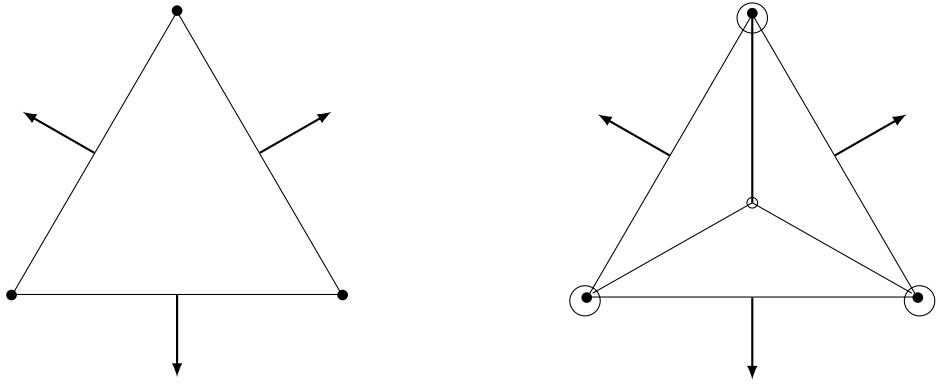
\end{center}

\subsection{Nonconforming discretisation}\label{subsection:ncfem}

{ Throughout the rest of this article,  the following notations are adopted.  Let $\T$ denote a shape regular triangulation of the polygonal Lipschitz domain into compact triangles.  Associate its piecewise constant mesh-size $h_\T \in P_0(\T)$ with $h_T:= h_\T|_T:= {\rm diam} (T) \approx |T|^{1/2}$ in any triangle $T \in \T$ of area $|T|$ and its maximal mesh-size $h_{\rm max}:= {\rm max} \; h_\T$. Let ${\mathcal V}$ (resp. ${\mathcal V}(\Omega)$ or ${\mathcal V}(\partial \Omega)$) denote the set of all (resp. interior or boundary) vertices in $\T$. Let ${\mathcal E}$ (resp. ${\mathcal E}(\Omega)$ or ${\mathcal E}(\partial \Omega)$) denote the set of all (resp. interior or boundary) edges. The length of an edge $E$ is denoted by $h_E$.  
Let  $\Pi_k$ denote the $L^2(\Omega)$ orthogonal projection onto 
the piecewise polynomials 
$P_k(\T):= \{ v \in L^2(\Omega): \forall T \in \T, \: v|_T \in P_k(T)\}$ 
of degree at most $k \in {\mathbb N}_0$. }
\medskip {
Let the  Hilbert space  $H^1(\T)\equiv \prod_{T\in \T} H^1(T)$.  Define the jump $[\varphi]_E:=\varphi|_{T_+}-\varphi|_{T_-}$ and the average $\langle\varphi\rangle_E:=\half\left(\varphi|_{T_+}+\varphi|_{T_-}\right)$ across the interior edge 
$E=\partial T_+\cap\partial T_-\in\E(\Omega)$ of $\varphi\in H^1(\cT)$ of the adjacent triangles  $T_+$ and $T_-\in\T$ in an order such that the unit normal vector  
$\nu_{T_+}|_E= \nu_E= - \nu_{T_-}|_E$ along the edge $E$
has a fixed orientation and points outside $T_+$ and inside $T_-$;
$\nu_T$ is the outward unit normal of $T$ along $\partial T$. The edge-patch $\omega(E):=\text{\rm int}(T_+\cup T_-)$ of the interior edge 
$E=\partial T_+\cap\partial T_-\in\E(\Omega)$  is the interior of union 
$T_+\cup T_-$ of the neighboring triangles $T_+$ and $T_-$.
Extend the definition of the jump and the average to an edge 
$E\in \E(\partial\Omega)$ on the boundary by $[\varphi]_E:=\varphi|_E$ and $\langle\varphi\rangle_E:=\varphi|_E$ 
 owing to the homogeneous boundary conditions.   Jump and average are understood componentwise for any vector function. 
 The edge-patch $\omega(E):=\text{\rm int}(T_+)$ of an  edge $E\in \E(\partial\Omega)$ on the boundary is simply the interior of the one triangle
 $T_+$  with the edge $E$ in the triangulation $\T$.}


\medskip  \noindent
The  nonconforming {\it Morley} finite element space \cite{Ciarlet}
reads
\begin{eqnarray*}
{\M}'(\T)&:=&\Bigg{\{} v_\M\in P_2(\T){{\Bigg |}}
\begin{aligned}
&\; v_\M \text{ is continuous at the vertices and its normal derivatives } \\
& \nu_E\cdot \nabla_{\NC}{ v_\M} \text{ are continuous at the midpoints of interior edges}
\end{aligned}\Bigg{\}}, 
\\
{\M}(\T)&:=&\Bigg{\{} v_\M\in \M'(\T){{\Bigg |}}
\begin{aligned}
&\; v_\M \text{  vanishes at the vertices of }\partial \Omega \text{ and its normal derivatives} \\
&\; \nu_E\cdot \nabla_{\NC}{ v_\M} \text{  vanish at the midpoints of boundary edges}
\end{aligned}\Bigg{\}}.
\end{eqnarray*}
Figure~\ref{fig}.a depicts the degrees of freedom 
of the Morley finite element 
\[
\left(T, P_2(T), (\delta_z:z\in \V(T))\cup( \fint_E \partial_{\nu_E}\bullet \ds:E\in\E(T))\right)
\]
(in the sense of Ciarlet)  in the triangle $T$ with set of vertices $\V(T)$ and set of edges $\E(T)$.

The semi-scalar product $a_\pw$ is defined by the piecewise differential operator   $D^2_\pw$ and
\begin{align}
a_\pw(v_\pw,w_\pw)&:= \sum_{T \in \T}  \int_T D^2 v_\pw : D^2 w_\pw \dx \quad\text{for all }v_\pw, w_\pw \in H^2(\T). \label{eq:a_pw}
\end{align}
It induces a piecewise $H^2$ seminorm 
$\trinl \bullet \trinr_\pw= a_\pw(\bullet,\bullet)^{1/2}$ that is also a norm in $\M(\T)$.  
Then $(\M(\T), a_\pw)$ is a (finite-dimensional) Hilbert space so that, given any $F_h \in \M(\T)^*$, there exists a unique discrete solution $u_\M \in \M(\T)$ to 
\begin{align} \label{eq:discrete}
a_\pw(u_\M,v_\M)= F_h(v_\M) \quad\text{for all }v_\M \in \M(\T).
\end{align}

\subsection{Interpolation of discontinuous functions}\label{sec:interpol}
\begin{lem}[interpolation  estimates I \cite{ccdg_2014, BrennerSungZhang13}] \label{interpolationerrorestimatesI}
The Morley interpolation operator $I_\M : V \rightarrow \M(\T)$ { is defined by $(I_\M v)(z)=v(z)$ and 
$\fint_E\frac{\partial I_\M v}{\partial \nu_E}\ds=\fint_E\frac{\partial v}{\partial \nu_E}\ds$  for any  $z\in {\mathcal V}(\Omega) $ and $ E\in {\cE(\Omega)}$. }It satisfies (a) the integral mean property of the Hessian, $D^2_{\text{\rm pw}} I_\M =\Pi_0 D^2$, \\(b) 
$ \displaystyle\sum_{m=0}^2   h_T^{m-2}   |  v - I_\M v |_{H^m(T)}  \le 2 \| (1- \Pi_0)D^2v \|_{L^2(T)}$
\quad for all {$v\in H^2(T)$}
 and any $T\in\T$,  \\
(c)  $\trinl v- I_\M v \trinr_{\pw} \lesssim h^{s}_{\max} \|v\|_{H^{2+s}(\Omega)}$\qquad for all $v \in  H^{2+s}(\Omega)$ and all $0 \le s \le 1$. \qed
\end{lem}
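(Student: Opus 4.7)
The plan is to prove the three parts in order: (a) follows from integration by parts, (b) reduces to (a) combined with a scaled Poincar\'e--Friedrichs inequality on each triangle, and (c) is obtained from (b) and an interpolation argument between the cases $s=0$ and $s=1$.

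For part (a), set $w := v - I_\M v$ and note that $D^2_\pw I_\M v$ is piecewise constant because $I_\M v|_T \in P_2(T)$. Hence it suffices to show $\int_T D^2 w \dx = 0$ for every $T \in \T$. The divergence theorem gives
\[
\int_T \partial^2_{ij} w \dx = \int_{\partial T} \nu_i \, \partial_j w \ds = \sum_{E \subset \partial T} \nu^E_i \int_E \partial_j w \ds,
\]
so it remains to verify $\int_E \partial_j w \ds = 0$ for each edge $E \subset \partial T$ and $j = 1, 2$. Decomposing $\nabla w$ in the orthonormal frame $\{\nu_E, t_E\}$ along $E$ splits the integral into a normal and a tangential contribution. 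The normal contribution $\int_E \nabla w \cdot \nu_E \ds$ vanishes by the very definition of $I_\M$, while the tangential contribution equals $w(b) - w(a)$ for the two endpoints $a, b$ of $E$, which is zero because $w$ vanishes at every vertex. This establishes (a).

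For part (b), the case $m = 2$ is immediate from (a) since $|w|_{H^2(T)} = \|D^2_\pw w\|_{L^2(T)} = \|(1-\Pi_0) D^2 v\|_{L^2(T)}$. For $m = 0, 1$ I would transform to a reference triangle $\widehat T$ by an affine map; on $\widehat T$, the zero vertex values of $\widehat w$ together with the vanishing mean of $\partial_\nu \widehat w$ on each edge provide enough Dirichlet-type data for a Poincar\'e--Friedrichs inequality of the form $\|\widehat w\|_{L^2(\widehat T)} + |\widehat w|_{H^1(\widehat T)} \le \widehat C \, |\widehat w|_{H^2(\widehat T)}$. Pulling back to $T$ and using the standard scaling yields $h_T^{-2} \|w\|_{L^2(T)} + h_T^{-1} |w|_{H^1(T)} \le C_1 \|(1-\Pi_0) D^2 v\|_{L^2(T)}$. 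The main obstacle is obtaining the sharp combined constant $2$ in the statement; this requires a careful, not just qualitative, control of the individual Poincar\'e constants on the reference triangle (the references cited pin these down).

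For part (c), the case $s = 1$ follows from the $m = 2$ version of (b) together with the elementwise first-order approximation $\|(1-\Pi_0) D^2 v\|_{L^2(T)} \lesssim h_T |D^2 v|_{H^1(T)}$; summing the squares over $T \in \T$ produces $\trinl v - I_\M v \trinr_\pw \lesssim h_{\max} |v|_{H^3(\Omega)}$. The case $s = 0$ is trivial from (a), since $\trinl v - I_\M v \trinr_\pw \le \|D^2 v\|_{L^2(\Omega)} = |v|_{H^2(\Omega)}$. Intermediate values $0 < s < 1$ then follow by Sobolev--Slobodeckii interpolation applied to the bounded linear operator $v \mapsto D^2_\pw(v - I_\M v)$ viewed as a map between $H^{2+s}(\Omega)$ and $L^2(\Omega)$.
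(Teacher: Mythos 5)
The paper does not prove Lemma~\ref{interpolationerrorestimatesI}: the statement carries a \(\qed\) and is cited from \cite{ccdg_2014, BrennerSungZhang13}, so there is no internal proof to compare your attempt against. On its own merits, your proof of (a) is correct, and your proof of (c) is essentially correct (the endpoint cases \(s=0,1\) follow as you say, and the interpolation step works because \(H^{2+s}(\Omega)=[H^2(\Omega),H^3(\Omega)]_s\) for the Lipschitz domain \(\Omega\), a fact you tacitly use but do not mention).

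The gap is in part (b). Your plan is to map \(T\) to a reference triangle \(\widehat{T}\) by an affine map \(F(\widehat x)=B\widehat x+b\), prove a Poincar\'e--Friedrichs inequality for the pulled-back error \(\widehat w\), and scale back. This fails at the very first step because the Morley element is \emph{not} affine-equivalent. Under \(F\), the vertex values transform trivially, but the degree of freedom \(\fint_E \partial_{\nu_E}\bullet\,\ds\) becomes a mean \emph{slanted} derivative \(\fint_{\widehat E}\widehat\nabla\bullet\cdot (B^{-1}\nu_E)\,\ds\) on the reference edge, and \(B^{-1}\nu_E\) is parallel to \(\widehat\nu_{\widehat E}\) only when \(B\) is conformal. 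Hence \(\widehat w = w\circ F\) does \emph{not} have vanishing mean normal derivative on the reference edges, and the Poincar\'e constant in your argument depends on the transformed (slanted) constraints, i.e.\ on \(B\) and thus on the shape of \(T\). You would therefore obtain a constant that blows up with the aspect ratio, while the lemma asserts the universal constant \(2\) (the remark after the lemma explicitly stresses the shape-independence, quoting \(\kappa\approx 0.2575\) from \cite{ccdg_2014}). Your caveat — ``carefully control the individual Poincar\'e constants on the reference triangle'' — does not repair this: the issue is not lack of explicit constants but that the wrong constrained space is being considered on \(\widehat T\). The shape-robust constant requires a different mechanism, for instance exploiting that \(\nabla I_\M v\) is the nonconforming \(P_1\) (Crouzeix--Raviart) interpolation of \(\nabla v\) (so that \(\int_E\nabla(v-I_\M v)\,\ds=0\) for every edge) and combining this with explicit Friedrichs and trace inequalities directly on \(T\), as is done in \cite{ccdg_2014}.
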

\noindent A reformulation of Lemma \ref{interpolationerrorestimatesI}.a is the best-approximation property 
\begin{align} \label{eq:best_approx}
a_{\pw}(v-I_\M v, w_2) & =0 \quad\text{for all } v \in V \text{ and all } w_2 \in P_2(\T).
\end{align}
A reformulation of Lemma \ref{interpolationerrorestimatesI}.b is the existence of a universal constant $\kappa >0$  with 
\begin{align} \label{eq:interpolation_estimate}
\|h_\T^{-2} (v-I_\M v)\| & \le \kappa \trinl v- I_\M v\trinr_\pw \quad\text{for all } v \in V.
\end{align}
(In fact $\kappa= 0.25745784465$  from \cite{ccdg_2014}  is independent of  the shape of the triangle $T$.)

{
\begin{rem}[Pythagoras]  The functions  $v_\pw,w_\pw \in H^2(\T):= \{ v_\pw\in L^2(\Omega): \forall T\in\T, \;   v_\pw|_T\in H^2(T)\}$  are orthogonal iff \(a_\pw( v_\pw,w_\pw)=0\) holds and then the Pythagoras theorem leads to 
\begin{align} \label{eq:Pythogoras}
 \trinl v-w_\M  \trinr_\pw^2 =  \trinl v -I_\M v \trinr_\pw^2+ \trinl w_\M- I_\M v \trinr_\pw^2
\end{align}
for all $v\in  V $ and $w_\M \in \M(\T)$. In particular, $\displaystyle \trinl v- I_\M v \trinr_\pw = \min_{v_2 \in P_2(T)} \trinl v-v_2 \trinr_\pw.$
\end{rem}}
\begin{defn}[(local) Morley interpolation] \label{deflocMorleyinterpolation}
Given any $T\in\T$ and $v_\pw\in H^2(T)$, the 
(local) Morley interpolation
$I_\M^{\rm loc} v_\pw|_T\in P_2(T)$ is defined by the degrees of freedom of the Morley finite element such that,
for all $z\in \V(T)$ and for all $E\in \E(T)$,
\[
(I_\M^{\rm loc} v_\pw- v_\pw)|_T(z)=0\quad\text{and}\quad
\fint_E (\partial (I_\M^{\rm loc} v_\pw- v_\pw)|_T/\partial \nu_E) \ds=0 .
\]
\end{defn}

The Morley interpolation allows for an  extension (still denoted by $I_\M$) to piecewise $H^2$ 
functions in  $H^2(\T)\equiv \prod_{T\in \T} H^2(T)$ by averaging the degrees of freedom.

\begin{defn}[Morley interpolation]\label{defccMorleyinterpolation}
Given any $v_{\pw} \in H^2(\T)$, define $I_\M v_{\pw} := v_\M \in \M(\T)$ by the degrees of freedom 
as follows. For any interior vertex    $z \in {\mathcal V}(\Omega) $ with set of attached triangles 
$\T(z) $ {that has} cardinality $|\T(z) |\in\mathbb{N}$
and any  interior edge 
$E = \partial T_+ \cap \partial T_-\in {\E(\Omega)}$ and its mean value operator $\mean{\bullet}$ (the arithmetic mean of the two traces
from the triangles $T_+$ and $T_- \in \T$ along their common edge $E=\partial T_+\cap \partial T_-$), set 
\[
v_\M(z):= 
 |\T(z)|^{-1}
\sum_{T \in \T(z)} (v_\pw|_T)(z)
\quad\text{and}\quad
 \fint_E\frac{\partial v_\M}{\partial \nu_E} \ds := \fint_E \mean{\frac{\partial v_{\pw}}{\partial \nu_E}} \ds.
\]
(The remaining degrees of freedom at vertices  and edges on the boundary are 
zero for homogeneous boundary conditions.)
\end{defn} 

\begin{rem}[ {standard Morley interpolation} 
vs Definition \ref{defccMorleyinterpolation}]
The interpolation operator $I_\M$ of Definition \ref{defccMorleyinterpolation} extends that of 
{ standard Morley interpolation operator}
in the sense that the two definitions coincide for functions in $H^2_0(\Omega)$.
This justifies the use of the same symbol $I_\M$.
\end{rem}

\subsection{Companion operator and  best-approximation  for the Morley FEM}\label{sec:companion}
A conforming finite-dimensional subspace of $H^2_0(\Omega)$ is provided by the Hsieh-Clough-Tocher $(HCT)$ FEM  \cite[Chap. 6]{Ciarlet}. 
For any $T\in\mathcal{T}$, let $\mathcal{K}(T):=\{T_E:\ E\in\mathcal{E}(T)\}$ denote the triangulation of $T$ into three sub-triangles $T_E:=\textup{conv}\{E,\textup{mid}(T)\}$ with edges $E\in\mathcal{E}(T)$ and common vertex $\textup{mid}(T)$ depicted in Figure~\ref{fig}.b. Then, 
\begin{align}
{HCT}(\mathcal{T})&:=\{v\in H^2_0(\Omega):\ v|_T\in P_3(\mathcal{K}(T))\text{ for all }T\in\mathcal{T}\}.\label{eq:HCT}
\end{align}
The degrees of freedom in a triangle $T\in\mathcal{T}$  are the nodal values    $\psi(z)$  and its derivative $\nabla \psi(z)$ 
  of the function  $\psi\in {HCT}(\mathcal{T})$ 
at any vertex  $z\in\V(T)$ and the values  $\partial\psi/\partial\nu_{E}(\textup{mid}(E))$ of the normal derivatives at the midpoint $\textup{mid}(E)$
of any edge $E\in\mathcal{E}(T)$.  

\begin{lem}[right-inverse \cite{DG_Morley_Eigen,aCCP,veeser_zanotti1}]\label{companion} 
There exists a linear map $J: {\rm M}(\mathcal{T})\to (HCT(\mathcal{T})+P_8(\mathcal{T})) \cap H^2_0(\Omega)$ 
and a constant   $\Lambda_\jc$ (that exclusively depends on the shape regularity of $\T$) 
such that any $v_{\rm M} \in {\rm M}(\mathcal{T})$ satisfies (a)--(e).
\begin{enumerate} 
\label{lem:MorleyCompanion}
\item[(a)]\label{MorleyCompanion:Nodal} $Jv_{\rm M}(z)=v_{\rm M}(z)$ for any  $z\in\mathcal{V}$;
\item[(b)]\label{MorleyCompanion:Derivative}  $\nabla ({J}v_{\rm M})(z)=
	|\mathcal{T}(z)|^{-1}\sum_{T\in\mathcal{T}(z)}(\nabla v_{\rm M}|_T)(z)
	\quad \text{ for }z\in\mathcal{V}(\Omega)$;
\item[(c)] \label{MorleyCompanion:Edges}$\fint_E \partial J v_{\rm M}/\partial\nu_E \,\textup{d}s=\fint_E \partial v_{\rm M}/\partial\nu_E \,\textup{d}s$ for any $E\in\mathcal{E}$;
\item[(d)] 
$ \trinl v_\M- J v_\M \trinr_\pw \le \Lambda_\jc  \min_{v\in V}  \trinl v_\M- v \trinr_\pw $; 
\item[(e)]  
$v_\M- J v_\M \perp P_2(\T)$ in $ L^2(\Omega)$. \qed
\end{enumerate}
\end{lem}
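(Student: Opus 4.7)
The plan is to construct $J = J_1 + J_B$ as an averaging HCT-interpolation $J_1$ delivering (a)--(d) plus a local bubble correction $J_B$ enforcing the orthogonality (e) without spoiling (a)--(d). Define $J_1 v_\M \in HCT(\T)$ by prescribing its HCT degrees of freedom from those of $v_\M$: at every vertex $z$ set $(J_1 v_\M)(z) := v_\M(z)$ and, for interior $z$, $\nabla(J_1 v_\M)(z) := |\T(z)|^{-1} \sum_{T \in \T(z)} (\nabla v_\M|_T)(z)$; at every edge $E$ set $\fint_E \partial_{\nu_E} J_1 v_\M \ds := \fint_E \partial_{\nu_E} v_\M \ds$. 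The homogeneous boundary DoFs of $v_\M \in \M(\T)$ then force $J_1 v_\M \in HCT(\T) \cap H^2_0(\Omega)$, and (a)--(c) follow by construction.

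For (d), I would argue triangle-by-triangle. On each $T$ the difference $(v_\M - J_1 v_\M)|_T$ is a piecewise cubic whose $H^2(T)$-seminorm is, by finite-dimensional scaling on the reference element, controlled by the vertex-gradient discrepancies $h_T \bigl| (\nabla v_\M|_T)(z) - \nabla(J_1 v_\M)(z) \bigr|$ summed over $z \in \V(T)$, all remaining HCT degrees matching by construction. Each such discrepancy measures the deviation of $(\nabla v_\M|_T)(z)$ from the mean over $\T(z)$ and is therefore bounded by pairwise vertex-patch differences $\bigl| (\nabla v_\M|_T)(z) - (\nabla v_\M|_{T'})(z) \bigr|$. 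Inserting an arbitrary $v \in V$ between $\nabla v_\M|_T$ and $\nabla v_\M|_{T'}$ and applying a trace/inverse inequality on the shape-regular patch yields $\trinl v_\M - J_1 v_\M \trinr_\pw \lesssim \trinl v_\M - v \trinr_\pw$; infimising over $v \in V$ gives (d) for $J_1$ with a constant depending only on the shape regularity of $\T$.

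To enforce (e), correct $J_1$ locally by setting $J_B v_\M|_T := b_T^2 q_T$, where $b_T = \lambda_1 \lambda_2 \lambda_3$ is the cubic bubble on $T$ and $q_T \in P_2(T)$ is the unique solution of the symmetric positive-definite $6 \times 6$ system $\int_T b_T^2 q_T p \dx = \int_T (v_\M - J_1 v_\M) p \dx$ for all $p \in P_2(T)$ (well-posed since $(p,q) \mapsto \int_T b_T^2 p q \dx$ is a scalar product on $P_2(T)$). Because $b_T^2$ and its normal derivative vanish on $\partial T$, the function $J_B v_\M \in P_8(\T) \cap H^2_0(\Omega)$ preserves (a)--(c), and (e) holds for $J := J_1 + J_B$ by construction of $q_T$. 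A reference-scaling bound $\trinl J_B v_\M \trinr_\pw \lesssim h_T^{-2} \|v_\M - J_1 v_\M\|_{L^2(T)}$ combined with a Poincar\'e-type inequality for $v_\M - J_1 v_\M$ (which carries vanishing vertex values and vanishing edge-mean normal derivatives on each $T$) delivers $\trinl J_B v_\M \trinr_\pw \lesssim \trinl v_\M - J_1 v_\M \trinr_\pw$, so (d) survives for $J$.

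The main obstacle will be keeping $\Lambda_\jc$ dependent only on the shape regularity of $\T$. This is delicate at the vertex-averaging step, where uniform constants across all admissible patches and vertex valences are required; the standard reduction is a compactness/reference-patch argument, and this is precisely where shape regularity enters the final bound quantitatively.
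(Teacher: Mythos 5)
The construction you propose---a $C^1$ Hsieh--Clough--Tocher averaging $J_1$ followed by an elementwise degree-$8$ bubble correction $J_B$ to enforce the $L^2$-orthogonality to $P_2(\T)$---is precisely the companion operator of \cite{aCCP} to which the paper defers for the proof of this lemma (the paper itself states the result without proof). Your treatment of (a), (b), (c), (e), and the bound on $J_B$ by inverse and Poincar\'e-type inequalities on the HCT macro-element is sound.

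There is, however, a genuine gap in your argument for (d). To control the vertex-gradient discrepancy $(\nabla v_\M|_T)(z)-(\nabla v_\M|_{T'})(z)$ you suggest ``inserting an arbitrary $v\in V$'' and applying a trace/inverse inequality---that is, subtracting $\nabla v(z)$ from both summands. But for $v\in H^2_0(\Omega)$ in two dimensions, $\nabla v\in H^1(\Omega)$ does \emph{not} embed into $C^0(\overline\Omega)$, so the quantity $\nabla v(z)$ is undefined and no trace inequality recovers it; the step fails at the outset. The rigorous route, used in \cite{aCCP} and the other references, never evaluates $\nabla v$ pointwise. One first exploits the Morley degrees of freedom: for an interior edge $E$ with endpoints $z_1,z_2$, the jump $[\nabla v_\M]_E$ is affine along $E$ and has zero mean, because $\fint_E [\partial v_\M/\partial\nu_E]_E\,\ds=0$ by the Morley edge condition while $\int_E[\partial v_\M/\partial\tau_E]_E\,\ds=[v_\M]_E(z_2)-[v_\M]_E(z_1)=0$ from vertex continuity. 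A one-dimensional Poincar\'e-plus-inverse estimate for mean-zero affine functions on $E$ then gives $|[\nabla v_\M]_E(z)|\lesssim h_E^{1/2}\,\|[D^2_\pw v_\M]_E\|_{L^2(E)}$, and summing your scaling bound over $T\in\T$ yields $\trinl v_\M-J_1v_\M\trinr_\pw^2\lesssim\sum_{E\in\E}h_E\|[D^2_\pw v_\M]_E\|_{L^2(E)}^2$. Assertion (d) now follows from the \emph{efficiency} of this edge residual, i.e.\ the a~posteriori bound $\sum_{E\in\E}h_E\|[D^2_\pw v_\M]_E\|_{L^2(E)}^2\lesssim\min_{v\in V}\trinl v_\M-v\trinr_\pw^2$, established by edge-bubble techniques on the shape-regular patches $\omega(E)$. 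It is this explicit jump-residual detour---not a compactness/reference-patch argument as you anticipated---that makes $\Lambda_\jc$ depend only on shape regularity.
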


\noindent The operator $J$ of Lemma~\ref{companion} with $(a)$-$(c)$ 
is a right-inverse for  $I_\M : V \rightarrow \M(\T)$,   i.e.,
\begin{align} 
& I_\M J = \text{id in } \M(\T).   \label{eq:right_inverse}
\end{align}
Examples are provided in   \cite{DG_Morley_Eigen,aCCP,veeser_zanotti1}.  {For earlier references in the literature, see \cite{BrennerMR1620215}, \cite{CCEMHRHWLC2012}, \cite{CCMC2013}. }A right-inverse with benefits like (d)-(e) is called companion operator and \cite{aCCP} defines  $J: {\rm M}(\mathcal{T})\to V$ so that 
(a)-(e) of Lemma \ref{companion} hold (cf. in particular \cite[Lemma 5.1]{aCCP} for the analysis of (d)-(e)).

\medskip\noindent Given $F \in L^2(\Omega)$, we may choose $F_h \equiv F$ in the  discrete scheme \eqref{eq:discrete}; but otherwise $F_h= F \circ J$ is the option throughout this paper; other choices are proposed in \cite{BS05,veeser_zanotti3}.
Given any Lebesgue function $F \equiv F_h \in L^2(\Omega)$ with its $L^2$ projection $\Pi_2 F$ onto $P_2(\T)$, define its oscillations
$ 
\osc_2(F,\T):= \|h_\T^2 (F-\Pi_2F)\|
$. 

\begin{thm}[best-approximation up to data approximation \cite{Gudi10,CarstensenGallistlNataraj2015}] \label{thm:sec2.energynorm0}
The constant  $\constant [1]:=\max\{ \kappa \Lambda_\jc, 1+ \Lambda_\jc\}$, the solution  $u \in V$ to \eqref{eq:weakabstract} 
with $F\in L^2(\Omega)$, 
and the solution  $u_\M \in \M(\T)$ to  \eqref{eq:discrete} with  $F_h \equiv F$ satisfy 
$\constant{}^{-1}  \trinl u -  u_\M \trinr_\pw  \le 
\trinl u - I_\M u\trinr_\pw + \osc_2(F,\T)$. \qed
\end{thm}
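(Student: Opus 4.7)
The plan is to start from the Pythagoras identity \eqref{eq:Pythogoras}, namely
$\trinl u - u_\M\trinr_\pw^2 = \trinl u - I_\M u\trinr_\pw^2 + \trinl e_\M\trinr_\pw^2$
with $e_\M := u_\M - I_\M u \in \M(\T)$, and then to control the discrete error $\trinl e_\M\trinr_\pw$ by $\trinl u - I_\M u\trinr_\pw + \osc_2(F,\T)$. Expanding $\trinl e_\M\trinr_\pw^2 = a_\pw(e_\M, e_\M)$, the discrete equation \eqref{eq:discrete} with $F_h \equiv F$ gives $a_\pw(u_\M, e_\M) = F(e_\M)$, while the orthogonality \eqref{eq:best_approx} (applied to $e_\M \in P_2(\T)$) gives $a_\pw(I_\M u, e_\M) = a_\pw(u, e_\M)$. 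Subtracting yields $\trinl e_\M\trinr_\pw^2 = F(e_\M) - a_\pw(u, e_\M)$.

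The next step is to insert the companion $J e_\M \in V$. Since $u, J e_\M \in V$ solve the continuous problem $a(u, J e_\M) = F(J e_\M)$, one obtains
\begin{equation*}
\trinl e_\M\trinr_\pw^2 \;=\; F(e_\M - J e_\M) \,+\, a_\pw(u, J e_\M - e_\M).
\end{equation*}
The second term requires massaging. Apply \eqref{eq:best_approx} to the conforming function $J e_\M \in V$ and its Morley interpolant $I_\M(J e_\M) = e_\M$ (from the right-inverse property \eqref{eq:right_inverse}): for every $p_2 \in P_2(\T)$, $a_\pw(J e_\M - e_\M, p_2) = 0$. Choosing $p_2 = I_\M u \in \M(\T) \subset P_2(\T)$ gives $a_\pw(I_\M u, J e_\M - e_\M) = 0$. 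Hence $a_\pw(u, J e_\M - e_\M) = a_\pw(u - I_\M u, J e_\M - e_\M)$, and Cauchy--Schwarz combined with Lemma~\ref{companion}(d) (minimizing over $V$ with $v = 0$) yields
$|a_\pw(u - I_\M u, J e_\M - e_\M)| \le \Lambda_\jc \trinl u - I_\M u\trinr_\pw \trinl e_\M\trinr_\pw$.

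For the first term, I exploit Lemma~\ref{companion}(e): $e_\M - J e_\M \perp P_2(\T)$ in $L^2(\Omega)$, so $F(e_\M - J e_\M) = \int_\Omega (F - \Pi_2 F)(e_\M - J e_\M)\dx$. A weighted Cauchy--Schwarz with $h_\T^{\pm 2}$, followed by the interpolation estimate \eqref{eq:interpolation_estimate} applied to $J e_\M \in V$ (using again $I_\M J e_\M = e_\M$) and Lemma~\ref{companion}(d), gives
\begin{equation*}
|F(e_\M - J e_\M)| \;\le\; \osc_2(F,\T) \cdot \kappa \trinl J e_\M - e_\M\trinr_\pw \;\le\; \kappa \Lambda_\jc \, \osc_2(F,\T) \, \trinl e_\M\trinr_\pw.
\end{equation*}
Collecting both bounds, dividing through by $\trinl e_\M\trinr_\pw$, and reinserting into the Pythagoras identity via the elementary $\sqrt{a^2 + (b+c)^2} \le a+b+c$ produces
$\trinl u - u_\M\trinr_\pw \le (1+\Lambda_\jc) \trinl u - I_\M u\trinr_\pw + \kappa\Lambda_\jc \, \osc_2(F,\T)$,
which is the asserted inequality with $\constant[1] = \max\{\kappa\Lambda_\jc, 1+\Lambda_\jc\}$.

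The main obstacle is the hidden orthogonality that allows the term $a_\pw(u, J e_\M - e_\M)$ to be replaced by $a_\pw(u - I_\M u, J e_\M - e_\M)$: the trick is to recognize that $J e_\M$ is a legitimate input to \eqref{eq:best_approx} whose Morley interpolant coincides with $e_\M$, so that $J e_\M - e_\M$ is $a_\pw$-orthogonal to the whole space $P_2(\T)$, and in particular to $I_\M u$. Everything else is standard Cauchy--Schwarz plus the bounds supplied by Lemmas~\ref{interpolationerrorestimatesI} and \ref{companion}.
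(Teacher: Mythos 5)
Your proof is correct and is precisely the ``direct proof'' the paper alludes to in the remark following Theorem~\ref{thm:energy_norm}, which notes that the $L^2$ orthogonality of Lemma~\ref{lem:MorleyCompanion}.e ``allows a direct proof of Theorem~\ref{thm:sec2.energynorm0} that circumvents the a posteriori error analysis of the consistency term as part of the medius analysis''. The paper itself states the theorem with a \qed{} and a citation to the medius-analysis route of \cite{Gudi10,CarstensenGallistlNataraj2015}, so no proof is reproduced in the text, but your argument matches the approach the authors advocate.

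Two small points worth noting. First, the key observation you flag as the ``main obstacle''---that $Je_\M - e_\M$ is $a_\pw$-orthogonal to all of $P_2(\T)$ because $I_\M(Je_\M)=e_\M$ by the right-inverse property---is exactly what replaces the a posteriori residual estimation in Gudi's medius analysis; you have isolated the correct mechanism. Second, your use of Lemma~\ref{companion}.e to rewrite $F(e_\M - Je_\M)$ as $\int_\Omega(F-\Pi_2 F)(e_\M - Je_\M)\,\mathrm{d}x$ followed by the weighted Cauchy inequality and \eqref{eq:interpolation_estimate} (applied to $Je_\M\in V$ with $I_\M Je_\M=e_\M$) reproduces the constant $\kappa\Lambda_\jc$ in front of the oscillation term, and the bound on the error component $\trinl e_\M\trinr_\pw\le\Lambda_\jc\trinl u-I_\M u\trinr_\pw+\kappa\Lambda_\jc\,\osc_2(F,\T)$ combined with Pythagoras and $\sqrt{a^2+d^2}\le a+d$ yields the stated constant $\constant[1]=\max\{\kappa\Lambda_\jc,1+\Lambda_\jc\}$. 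Every lemma invocation is applied with correct hypotheses.
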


The  discrete scheme \eqref{eq:discrete} requires a discrete right-hand side $F_h$  for a general $F \in H^{-2}(\Omega)$. 
The evaluation of $F_h:= F \circ J$ is feasible with $F_h(v_\M):= F(J v_\M)$ for all $v_\M \in \M(\T)$ and  
the {\it (modified) nonconforming scheme} seeks the solution $u_\M \in \M(\T)$ to
\begin{equation}\label{eq:modified_discrete}
a_\pw(u_\M, v_\M)= F(J v_\M) \quad\text{for all }v_\M \in \M(\T).
\end{equation}
Let $\Lambda_0$ denote the norm of $1-J$, where the right-inverse $J\in L(\M(\T); V)$ is regarded as a linear map between $\M(\T)$ and $V$,
\begin{equation}\label{eq:lambda0}
\Lambda_0:=\sup_{v_\M \in \M(\T) \setminus\{0\}} \trinl v_\M -J v_\M  \trinr_{\pw} /\trinl v_\M \trinr_{\pw} \le \Lambda_\jc. 
\end{equation}

\begin{thm}[best-approximation  \cite{NNCC2020,veeser_zanotti2}] \label{thm:energy_norm}
The solution  $u \in V$ to \eqref{eq:weakabstract}  with $F \in V^*$
and the solution  $u_\M \in \M(\T)$ to   \eqref{eq:modified_discrete} satisfy  
$\trinl u -  u_\M \trinr_\pw $$~\le ~\sqrt{1+\Lambda_0^2}\trinl u-I_\M u\trinr_\pw. $ {The constant $\sqrt{1+\Lambda_0^2}$ is optimal.} \qed 
\end{thm}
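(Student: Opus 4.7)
The plan is to combine the Pythagoras identity \eqref{eq:Pythogoras} with a testing argument for the discrete error. Setting $e_\M := u_\M - I_\M u \in \M(\T)$, the identity \eqref{eq:Pythogoras} applied with $v = u$ and $w_\M = u_\M$ yields
\[
\trinl u - u_\M \trinr_\pw^2 = \trinl u - I_\M u \trinr_\pw^2 + \trinl e_\M \trinr_\pw^2,
\]
so the task reduces to proving the sharp bound $\trinl e_\M \trinr_\pw \le \Lambda_0 \trinl u - I_\M u \trinr_\pw$; the main inequality with constant $\sqrt{1+\Lambda_0^2}$ then follows immediately.

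For any $v_\M \in \M(\T)$, the modified scheme \eqref{eq:modified_discrete} together with $F(J v_\M) = a(u, J v_\M) = a_\pw(u, J v_\M)$ (valid since $u, J v_\M \in V$) give $a_\pw(e_\M, v_\M) = a_\pw(u, J v_\M) - a_\pw(I_\M u, v_\M)$. Two structural identities are the heart of the argument. First, the orthogonality \eqref{eq:best_approx} yields $a_\pw(I_\M u, v_\M) = a_\pw(u, v_\M)$ since $v_\M \in P_2(\T)$. Second, Lemma \ref{interpolationerrorestimatesI}(a) and the right-inverse property \eqref{eq:right_inverse} imply $\Pi_0 D^2 J v_\M = D^2_\pw(I_\M J v_\M) = D^2_\pw v_\M$, and $L^2$-adjointness of $\Pi_0$ then delivers $a_\pw(I_\M u, J v_\M) = a_\pw(u, v_\M)$. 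Subtracting produces the key algebraic orthogonality $a_\pw(I_\M u, J v_\M - v_\M) = 0$, whence
\[
a_\pw(e_\M, v_\M) = a_\pw(u, J v_\M - v_\M) = a_\pw(u - I_\M u, J v_\M - v_\M).
\]
Cauchy-Schwarz and the definition \eqref{eq:lambda0} of $\Lambda_0$ bound the right-hand side by $\Lambda_0 \trinl u - I_\M u \trinr_\pw \trinl v_\M \trinr_\pw$, and testing with $v_\M := e_\M$ completes the main bound.

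For optimality of $\sqrt{1 + \Lambda_0^2}$, the plan is to select a maximizer $v_\M^* \in \M(\T) \setminus \{0\}$ of \eqref{eq:lambda0} (which exists by finite dimensionality of $\M(\T)$), set $u := J v_\M^* \in V$ so that $I_\M u = v_\M^*$ by \eqref{eq:right_inverse} and $\trinl u - I_\M u \trinr_\pw = \Lambda_0 \trinl v_\M^* \trinr_\pw$, and take $F := a(u,\cdot) \in V^*$. The symmetric positive semi-definite operator $T \in L(\M(\T))$ defined by $a_\pw(T v, w) := a_\pw(J v - v, J w - w)$ has $\Lambda_0^2$ as its largest eigenvalue (Rayleigh quotient) and $v_\M^*$ as a principal eigenvector, so $e_\M := \Lambda_0^2 v_\M^*$ solves the discrete equation derived above for every $v_\M$, and Pythagoras forces equality throughout. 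The main obstacle is securing the algebraic orthogonality $a_\pw(I_\M u, J v_\M - v_\M) = 0$, which is the crucial payoff of designing $J$ as a right-inverse of $I_\M$; once it is in place, the remainder reduces to a one-line Cauchy-Schwarz argument.
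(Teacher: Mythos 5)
Your argument is correct. The paper defers the proof of Theorem~\ref{thm:energy_norm} to \cite{NNCC2020,veeser_zanotti2} and offers none of its own, so there is no in-paper proof to compare against; your route is, however, exactly the one the surrounding machinery is designed to enable. The main bound rests on the Pythagoras identity \eqref{eq:Pythogoras} (a consequence of \eqref{eq:best_approx}) together with the companion-operator orthogonality $a_\pw\bigl(v_\M,(1-J)w_\M\bigr)=0$ for $v_\M,w_\M\in\M(\T)$ — an identity the paper itself invokes inside Lemma~\ref{ref:key_identity} — which reduces the discrete residual to $a_\pw\bigl(u-I_\M u,(J-1)v_\M\bigr)$; Cauchy–Schwarz, \eqref{eq:lambda0}, and the test function $v_\M:=e_\M$ then give $\trinl e_\M\trinr_\pw\le\Lambda_0\trinl u-I_\M u\trinr_\pw$. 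The optimality argument is also sound: the self-adjoint operator $T\in L(\M(\T))$ with $a_\pw(Tv,w)=a_\pw\bigl((J-1)v,(J-1)w\bigr)$ has top eigenvalue $\Lambda_0^2$ with maximizing eigenvector $v_\M^*$, and the choice $u:=Jv_\M^*$, $F:=a(u,\bullet)$, yields $I_\M u=v_\M^*$ via \eqref{eq:right_inverse} and then $e_\M=\Lambda_0^2 v_\M^*$ by uniqueness in the Hilbert space $(\M(\T),a_\pw)$, so every inequality in the chain becomes an equality and $\trinl u-u_\M\trinr_\pw=\sqrt{1+\Lambda_0^2}\,\trinl u-I_\M u\trinr_\pw$. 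One cosmetic point: the optimality construction needs $\Lambda_0>0$ so that $\trinl u-I_\M u\trinr_\pw\ne 0$; this holds automatically whenever $\M(\T)\not\subset V$, which is the case for any nontrivial triangulation, so the argument is complete — but it would be cleaner to state this in the proof rather than leave it implicit.
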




\begin{rem}[extra orthogonality in Lemma~\ref{lem:MorleyCompanion}.e.]
The $L^2$ orthogonality  in Lemma~\ref{lem:MorleyCompanion}.e  allows
control over dual norm estimates of the form 
$\| v_\M- J v_\M \|_{H^{-s}(\Omega)} \lesssim \| h_\T^s ( v_\M- J v_\M )\|$ for $0\le s \le 2$. This is critical in eigenvalue analysis or problems with low-order terms; for e.g. in \cite{aCCP, Carstensen2019}.
The  $L^2$ orthogonality  in Lemma~\ref{lem:MorleyCompanion}.e  also allows a direct proof of Theorem~\ref{thm:sec2.energynorm0}   that circumvents the a posteriori error analysis of the consistency 
term as part of the medius analysis \cite{Gudi10}.  Notice that the proof of the best-approximation of Theorem~\ref{thm:energy_norm} for the modified scheme does not require 
the $L^2$ orthogonality  in Lemma~\ref{lem:MorleyCompanion}.e.
\end{rem}

\begin{rem}[minimal assumptions on the smoother]
The series of papers \cite{veeser_zanotti1}-\cite{veeser_zanotti2}  addresses the question on the minimal assumptions on the smoother (partly as a right inverse only). This paper utilizes a smoother $J$ 
with the properties of Lemma~\ref{lem:MorleyCompanion}.a-d.  
\end{rem}

\medskip \noindent
The point in the subsequent example  is that the smoother $JI_\M$ 
may be more costly than averaging in other examples but it is at almost no extra costs for the case of point forces, which are of practical importance in civil engineering.  

{ \begin{example}[point forces]
Let  $m$ denote the point forces in the right-hand side,  i.e. , let
\begin{align}\label{eq:F}
F & = \sum_{j=1}^m \alpha_j \delta_{a_j},
\end{align}
the triangulation can be adopted such that the concentration point $a_j$ becomes a vertex in the triangulation. 
The right-inverse property of $I_\M$ displays that $(J I_\M v_\M)(z) = v_\M(z)$ holds at any vertex $z\in \V$  and for any Morley function $v_\M \in \M(\T)$. 
Hence the evaluation of the modified right-hand side $ F_h:= F\circ J$
leads for \eqref{eq:F} to $ F_h v_\M=\sum_{j=1}^m \alpha_j v_\M({a_j})$. The averaging of $I_\M$ in  Definition~\ref{defccMorleyinterpolation}  shows in the more general case $V_h\subset P_2(\Omega)$ that
$F_h:= F\circ J I_\M$ leads to $ F_h v_\M=\sum_{j=1}^m \alpha_j  / | \T(a_j)|\sum_{T\in \T(a_j) } (v_h|_T)({a_j})$. The same formula applies to other smoothers like the
enrichment in \cite{BS05} and \cite{veeser_zanotti1}-\cite{veeser_zanotti2}.
\end{example}}

\section{Interpolation of piecewise $H^2$ functions} \label{sec:novel_interpolation}

\subsection{Equivalent norms} \label{subsectionEquivalentnorms}

\medskip \noindent
 The Hilbert space  $H^2(\T)\equiv \prod_{T\in \T} H^2(T)$ is endowed with a norm
 $\|\bullet \|_h$ from \cite{CarstensenGallistlNataraj2015}  defined by 
\begin{align} \label{common:norm}
\| v_{\pw}\|^2_h:= \trinl v_\pw \trinr^2_{\pw} + j_h(v_{\pw})^2\quad\text{for all }
 v_{\pw}\in H^2(\T).
\end{align}
The homogeneous  boundary conditions in $H^2_0(\Omega)$ are included in the the jump contributions 
\begin{align} \label{eq:jh}
 j_h(v_{\pw})^2:= 
 \sum_{E \in \E} \sum_{z \in {\mathcal V} (E)} h_E^{-2} |[v_{\pw}]_E(z)|^2+
 \sum_{E \in \E}  \left|  \fint_E  \jump{{\partial v_{\pw}}/{\partial \nu_E}} \ds \right|^2 
\end{align}
by $[v_{\pw}]_E(z)=v_{\pw}|_{\omega(E)}(z)$ for $z\in\V(\partial\Omega)$ and  
$\jump{ \frac{\partial v_{\pw}}{\partial \nu_E} }  =\frac{\partial v_{\pw}}{\partial \nu_E }|_E$
for $E\in\E(\partial\Omega)$ at the boundary with jump partner zero owing to the homogeneous boundary conditions in 
\eqref{eq:weakabstract}.

The discontinuous Galerkin schemes of  \cite{Baker:1977:DG, FengKarakashian:2007:CahnHilliard} 
are associated with a another family  of norms  $\dgnorm{\bullet}$
depending on the two positive parameters $\sigma_1,\sigma_2>0 $ in  
the semi-norm scalar product 
\begin{equation}\label{eq:ccdefineJsigma1sigma2}
    c_\dg(v_{\pw},w_{\pw}) :=\sum_{E\in\E}
    \frac{\sigma_1}{h_E^3} \int_E \jump{v_\pw}\jump{w_\pw} \ds      
     +\frac{\sigma_2}{h_E} 
     \int_E \jump{\frac{\partial v_\pw}{\partial \nu_E}}\jump{\frac{\partial w_\pw}{\partial \nu_E}} \ds
\end{equation}
for all $v_{\pw},w_{\pw}\in H^2(\T)$.  The DG norm $\dgnorm{\bullet}$ is the square root of 
\begin{equation}\label{eq:ccdefinedGnorm}
 \dgnorm{v_\pw}^2 
 := \trinl v_{\pw} \trinr_{\pw}^2  +  c_\dg (v_\pw,v_\pw)
\end{equation}
for all $v_{\pw}\in H^2(\T)$. It depends on the parameters  $\sigma_1,\sigma_2>0 $ and so do all 
constants in the sequel; in particular those suppressed in the abbreviations $\lesssim$ and $\approx$. The conditions
on the ellipticity of the scheme in Lemma \ref{ellipticity} below  will assert that $\sigma_1$ and 
$\sigma_2$ are
sufficiently large. The analysis of this paper assumes this and simplifies the notation $\sigma_1\approx 1\approx \sigma_2$. 

One result in  \cite[Theorem 4.1]{CarstensenGallistlNataraj2015} shows that
$\hnorm{\bullet} \approx \dgnorm{\bullet}$ in  $H^2_0(\Omega) + P_2(\T)$; but the two norms 
are equivalent in  the  larger vector space  $H^2(\T)$.

\begin{thm}[$\|\bullet \|_h  \approx \dgnorm{\bullet}$]\label{lemmaonhnormisanorm}  
The  function  $\|\bullet \|_h$ from  \eqref{common:norm} and  $\dgnorm{\bullet}$ from 
\eqref{eq:ccdefinedGnorm} define norms in $H^2(\T)$ with
\[
\|v_\pw \|_h \approx \dgnorm{v_\pw}\lesssim
 \sum_{m=0}^2     | h_\T^{m-2}  v_{\rm pw} |_{H^m(\T)} 
\quad\text{for all }v_\pw\in H^2(\T).
\]
\end{thm}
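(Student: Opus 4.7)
The proof plan has three stages, naturally reflecting the three assertions: that each expression is a norm, that both are dominated by $\sum_{m=0}^{2} |h_\T^{m-2} v_\pw|_{H^m(\T)}$, and that they are equivalent.

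\emph{Stage 1 (norm axioms).} Both $\|\bullet\|_h^2$ and $\dgnorm{\bullet}^2$ are quadratic forms on $H^2(\T)$, so the triangle inequality holds by Cauchy--Schwarz and only definiteness requires an argument. If $\|v_\pw\|_h = 0$, then $\trinl v_\pw \trinr_\pw = 0$ forces each $v_\pw|_T$ to be affine. The vanishing of the vertex-jump and mean-normal-derivative contributions in $j_h$ forces an affine piecewise function to be globally affine (matching values at endpoints and constant normal traces across edges), and the boundary jumps (with jump partner zero) then force $v_\pw$ to vanish at boundary vertices with zero boundary normal derivative, whence $v_\pw=0$. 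The argument for $\dgnorm{\bullet}$ is analogous, the vanishing $L^2$-jumps being even stronger than the nodal/mean ones.

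\emph{Stage 2 (upper bound by $\sum_{m=0}^{2} |h_\T^{m-2}v_\pw|_{H^m(\T)}$).} The $m=2$ term dominates $\trinl v_\pw \trinr_\pw$ tautologically. For each jump contribution, apply the scaled trace inequality $\|w\|_{L^2(E)}^2 \lesssim h_T^{-1}\|w\|_{L^2(T)}^2 + h_T\|\nabla w\|_{L^2(T)}^2$ to $w = v_\pw$ and $w = \nabla v_\pw$, and absorb the $h_E^{-3}$ and $h_E^{-1}$ weights into the $m=0,1,2$ seminorms. For point values, the scaled Sobolev embedding $H^2(T) \hookrightarrow C^0(\bar T)$ gives $|v_\pw(z)|^2 \lesssim h_T^{-2}\|v_\pw\|_{L^2(T)}^2 + |v_\pw|_{H^1(T)}^2 + h_T^2 |v_\pw|_{H^2(T)}^2$. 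Summation over $\T$ bounds $\|v_\pw\|_h$ and $\dgnorm{v_\pw}$ by the stated right-hand side.

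\emph{Stage 3 (equivalence on $H^2(\T)$).} The equivalence is already known on the smaller space $P_2(\T) + H^2_0(\Omega)$ by \cite[Thm~4.1]{CarstensenGallistlNataraj2015}. To extend to $H^2(\T)$, split a generic $v_\pw \in H^2(\T)$ via the generalized Morley interpolation of Definition~\ref{defccMorleyinterpolation} as $v_\pw = I_\M v_\pw + r$ with $r := v_\pw - I_\M v_\pw$, noting that $I_\M v_\pw \in \M(\T) \subset P_2(\T) + H^2_0(\Omega)$ inherits the known equivalence. For the residual $r$, the construction of $I_\M$ enforces that the averaged vertex values and mean normal derivatives of $r$ across each edge vanish, so the contributions to $j_h(r)$ and $c_\dg(r,r)$ reduce to tangential variations along edges. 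A scaled trace inequality on each edge patch $\omega(E)$ then bounds these variations by the piecewise Hessian $D^2_\pw v_\pw$ (exploiting the integral-mean property $D_\pw^2 I_\M = \Pi_0 D^2$ from Lemma~\ref{interpolationerrorestimatesI}(a) applied with a local reference triangle). Combining with the known result on $I_\M v_\pw$ and a triangle inequality gives $\|v_\pw\|_h \approx \dgnorm{v_\pw}$.

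\emph{Main obstacle.} The delicate point in Stage 3 is estimating $h_E^{-1}\|\partial_\tau[v_\pw]_E\|_{L^2(E)}^2$, which naively invokes $|v_\pw|_{H^1(T)}^2$ via a trace inequality, a quantity not directly controlled by either norm. The resolution is to apply this estimate only to the residual $r$: the cancellations enforced by $I_\M$ (zero averaged nodal values and zero mean normal derivatives) upgrade a Poincar\'e-type inequality on $\omega(E)$ so that $h_T^{-1}|r|_{H^1(\omega(E))}^2 \lesssim |v_\pw|_{H^2(\omega(E))}^2$, closing the circle.
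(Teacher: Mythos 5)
Your Stages 1 and 2 are sound and broadly parallel the paper's ``Proof of the upper bound''. Stage 3, however, has a genuine gap.

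The paper's actual proof of the equivalence uses the \emph{piecewise affine} nodal interpolant $v_1\in P_1(\T)$ of $v_\pw$, not the Morley interpolation. The crucial observation is that $[v_1]_E$ is \emph{linear} on each edge $E$, so (via the eigenvalues $h_E/2$ and $h_E/6$ of the $1$D mass matrix) $h_E^{-3}\|[v_1]_E\|_{L^2(E)}^2$ and $h_E^{-2}\sum_{z\in\V(E)}|[v_1]_E(z)|^2$ are uniformly comparable; moreover $[v_1]_E(z)=[v_\pw]_E(z)$ by nodal interpolation, and the error $w=v_\pw-v_1$ satisfies standard local estimates that convert all the trace terms into multiples of $|v_\pw|_{H^2(\omega(E))}$. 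This gives a \emph{direct} exchange between the $L^2(E)$-jump penalty of $\dgnorm{\bullet}$ and the pointwise/mean penalty of $j_h$.

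Your Morley-based split $v_\pw=I_\M v_\pw+r$ breaks down at exactly the place you flag as the ``main obstacle''. Two problems. First, since $v_\M:=I_\M v_\pw\in\M(\T)$ is continuous at vertices and has continuous mean normal derivatives across interior edges, $j_h(v_\M)=0$; therefore $j_h(r)=j_h(v_\pw)$ \emph{exactly}. Nothing is absorbed into $v_\M$: the entire jump content of $\|v_\pw\|_h$ survives unchanged in the residual, so the residual is not ``smaller'' in the sense needed. Second, and more importantly, the claimed Poincar\'e-type estimate $h_T^{-1}|r|_{H^1(\omega(E))}^2\lesssim|v_\pw|_{H^2(\omega(E))}^2$ for $r=v_\pw-I_\M v_\pw$ is false. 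Take $v_\pw\equiv 0$ on $T_+$ and $v_\pw\equiv 1$ on $T_-$: then $D^2_\pw v_\pw\equiv 0$, but $I_\M v_\pw$ takes the value $1/2$ at the endpoints of $E=\partial T_+\cap\partial T_-$, so $r$ is a nonconstant affine function on each triangle and $|r|_{H^1(\omega(E))}>0$. What is actually true is Theorem~\ref{int_err}(a) of the paper, which carries the indispensable extra term $j_h(v_\pw)$ on the right-hand side,
\[
\sum_{m=0}^2|h_\T^{m-2}(v_\pw-I_\M v_\pw)|_{H^m(\T)}\;\lesssim\;\|(1-\Pi_0)D^2_\pw v_\pw\|\;+\;j_h(v_\pw),
\]
and that term is controlled by $\|v_\pw\|_h$ but not by $\dgnorm{v_\pw}$ alone without already invoking the equivalence you are trying to prove. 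So the argument is circular: bounding the residual in either norm requires the very norm comparison under consideration.

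A minor further symptom of the confusion is the quantity $h_E^{-1}\|\partial_\tau[v_\pw]_E\|_{L^2(E)}^2$, which does not occur in either $\|\bullet\|_h$ or $\dgnorm{\bullet}$; the relevant edge quantities are $h_E^{-3}\|[v_\pw]_E\|_{L^2(E)}^2$ and $h_E^{-1}\|[\partial v_\pw/\partial\nu_E]_E\|_{L^2(E)}^2$ on the dG side and $h_E^{-2}|[v_\pw]_E(z)|^2$, $|\fint_E[\partial v_\pw/\partial\nu_E]\,ds|^2$ on the $\|\bullet\|_h$ side. To repair Stage~3, replace $I_\M$ by the discontinuous piecewise affine nodal interpolant and argue as the paper does: the affine jump links $L^2$-edge norms of jumps and their endpoint values directly, and the interpolation error is absorbed into the Hessian seminorm.
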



\begin{rem}[$\{j_h=0\}\cap P_2(\T)= \M(\T)$] \label{remark3.1}
For any $v_2\in P_2(\T)$,  the condition $j_h(v_2)=0$ is equivalent to $v_2\in \M(\T)$. (This follows from the definitions of  $\M(\T)$ and $j_h$.)
\end{rem}

\begin{proof}[Proof of $\|\bullet \|_h\lesssim \dgnorm{\bullet}$] 
The (possibly discontinuous) piecewise affine interpolation 
$v_1\in P_1(\T)$ of $v_\pw\in H^2(\T)$ is defined by
nodal interpolation $v_1|_T(z)=v_\pw|_T(z)$ at the three vertices  $z\in\V(T)$ in each triangle $T\in\T$.
It is well known from standard finite element interpolation \cite{Braess,BS05,Ciarlet} that 
the error $w:= v_\pw-v_1\in H^2(T)$ satisfies 
\begin{equation}\label{eqccstandardfiniteelementinterpolation}
\sum_{m=0}^2 h_T^{m-2}  |w |_{H^m(T)}  \lesssim | v_\pw|_{H^2(T)}
\end{equation}
for each triangle $T\in\T$  with explicit constants \cite{CarstensenGedickeRim}
that exclusively depend on
the maximal angle in the triangulation. The nodal interpolation implies $[v_{\pw}]_E(z)=[v_1]_E(z)$ at each vertex  $z\in\V(E)$ of an edge $E\in\E$. Since 
${[v_{1}]_E}$ is an affine function along the edge $E\in\E$, an inverse estimate shows
\begin{equation}\label{eqccconstantsfromtheeigenvaluesofthe2times2mass}
h_E/6  \sum_{z \in {\mathcal V} (E)} |[v_1]_E(z)|^2\le \|  [v_{1}]_E\|_{L^2(E)}^2\le 
2 \|  [v_{\pw}]_E\|_{L^2(E)}^2+2 \|  [w]_E\|_{L^2(E)}^2 
\end{equation}
with a triangle inequality in the last step for $w= v_\pw-v_1$.
(The constant $h_E/6$ in the first inequality of
\eqref{eqccconstantsfromtheeigenvaluesofthe2times2mass}
stems from the eigenvalues $h_E/2$ and $h_E/6$ of the $2\times 2$ mass matrix of piecewise linear functions in 1D.)
This implies an estimate for the   first term of the definition of  $j_h(v_{\pw})$: 
$$ \sum_{E \in \E}  \sum_{z \in {\mathcal V} (E)} h_E^{-2} |  [v_{\pw}]_E(z)|^2 
\le 12 \sum_{E \in \E} h_E^{-3} ( \|  [v_{\pw}]_E\|_{L^2(E)}^2 + \|  [w]_E\|_{L^2(E)}^2).$$
A typical contribution  $(\fint_E  \jump{{\partial v_{\pw}}/{\partial \nu_E}} \ds)^2 $
for the second term (in the definition of $j_h$) is controlled  with a Cauchy inequality by 
$h_E^{-1}   \|  \jump{{\partial v_{\pw}}/{\partial \nu_E}} \|_{L^2(E)}^2 $.
This results in 
\[
 j_h(v_{\pw})^2\le \sum_{E \in \E}  h_E^{-1} 12 \left( 
  h_E^{-2} ( \|  [v_{\pw}]_E\|_{L^2(E)}^2 + \|  [w]_E\|_{L^2(E)}^2)
+   \|  \jump{{\partial v_{\pw}}/{\partial \nu_E}} \|_{L^2(E)}^2\right) .
\]
A triangle inequality  $\|  [w]_E\|_{L^2(E)} \le \|  w|_{T_+} \|_{L^2(E)}+ \|  w|_{T_-} \|_{L^2(E)}$ 
for an interior edge $E=\partial T_+\cap \partial T_-\in\E(\Omega)$ shared by the two triangles  $T_\pm\in\T$ 
plus trace inequalities on $T_\pm$ 
show 
\begin{equation}\label{eqccplustraceinequalitieson}
h_E^{1/2}  \|  [w]_E\|_{L^2(E)} \lesssim   \|  w \|_{L^2(\omega(E))}  + h_E  \|  \nabla_\pw w \|_{L^2(\omega(E))}
\lesssim h^2_E \|  D^2_\pw v_\pw \|_{L^2(\omega(E))} 
\end{equation}
with \eqref{eqccstandardfiniteelementinterpolation} in the end. 
The omission of $T_-$ in the above arguments for an edge $E\in\E(\partial\Omega)$ on the  boundary
provide \eqref{eqccplustraceinequalitieson} with  $\overline{\omega(E)}=T_+$. 
This and the finite overlap show
$ \sum_{E \in \E}  h_E^{-3} \|  [w]_E\|_{L^2(E)}^2 \lesssim   \trinl v_\pw  \trinr^2_{\pw}$.
In conclusion,
\[
 j_h(v_{\pw})^2\lesssim    \trinl v_\pw  \trinr^2_{\pw}+
 \sum_{E \in \E}   h_E^{-3} \|  [v_{\pw}]_E\|_{L^2(E)}^2 
+  \sum_{E \in \E}   h_E^{-1} \|  \jump{{\partial v_{\pw}}/{\partial \nu_E}} \|_{L^2(E)}^2.
\] 
The upper bound in the latter estimate is $ \dgnorm{v_\pw}^2 $ up to the weights  $\sigma_1\approx1\approx\sigma_2$.
\end{proof}

\begin{proof}[Proof of $ \dgnorm{\bullet}  \lesssim \|\bullet \|_h$]  Recall the 
piecewise affine interpolation  $v_1\in P_1(\T)$ of $v_\pw\in H^2(\T)$ and  $w:= v_\pw-v_1\in H^2(\T)$ 
with \eqref{eqccstandardfiniteelementinterpolation} from the previous part of the proof. 
Standard trace inequalities   as in  \eqref{eqccplustraceinequalitieson} for the first term 
(and an analog for the second term $h_E^{-1/2}\|  [\partial w/\partial\nu_E]_E \|_{L^2(E)} $) for $E\in \E$
provide
\begin{equation}\label{eqccplustraceinequalitieson2} 
h_E^{-3/2}\|  [w]_E \|_{L^2(E)}+ h_E^{-1/2}\|  [\partial w/\partial\nu_E]_E \|_{L^2(E)}
\lesssim \|  D^2_\pw v_\pw \|_{L^2(\omega(E))}. 
\end{equation}
This and  triangle inequalities result in 
\begin{align*}
& h_E^{-3/2}\|  [v_\pw]_E \|_{L^2(E)}+ h_E^{-1/2}\|  [\partial v_\pw/\partial\nu_E]_E \|_{L^2(E)} \\
&\quad \lesssim  
h_E^{-3/2}\|  [v_1]_E \|_{L^2(E)}+ h_E^{-1/2}\|  [\partial v_1/\partial\nu_E]_E \|_{L^2(E)} 
+  \|  D^2_\pw v_\pw \|_{L^2(\omega(E))} .
\end{align*}
The constant factor $1/2$ in the upper bound of the first subsequent  inequality (displayed as $2$ in the lower bound) 
stems from the eigenvalues $h_E/2$ and $h_E/6$ of the $2\times 2$ mass matrix of piecewise linear functions in 1D, 
\[
2 h_E^{-3}  \|  [v_{1}]_E\|_{L^2(E)}^2 \le  h_E^{-2} \sum_{z \in {\mathcal V} (E)} |[v_1]_E(z)|^2
=h_E^{-2}  \sum_{z \in {\mathcal V} (E)} |[v_\pw]_E(z)|^2
\]
with the nodal interpolation property $v_1|_T(z)= v_\pw|_T(z)$ for $z\in\V(T)$, $T\in\T$, in the last step.
The jump $[\partial v_1/\partial\nu_E]_E$ is constant along the edge $E$ and so 
\begin{align*}
& h_E^{-1/2}\|  [\partial v_1/\partial\nu_E]_E \|_{L^2(E)} = | \fint_E    [\partial v_1/\partial\nu_E]_E \ds | \\
&\quad \le      | \fint_E    [\partial v_\pw/\partial\nu_E]_E \ds |  +   | \fint_E    [\partial w/\partial\nu_E]_E \ds |  
\end{align*}
with a triangle inequality in the last step.   A Cauchy inequality 
$ \|  \nabla w_{\pw} \|_{L^1(E) }\le h_E^{1/2}  \|  \nabla w_{\pw} \|_{L^2(E) }$ and a trace inequality show
(as above in \eqref{eqccplustraceinequalitieson2})  that
\[
 | \fint_E    [\partial w/\partial\nu_E]_E \ds |  \le
h_E^{-1/2} \|   [\partial w/\partial\nu_E]_E\|_{L^2(E)}
\lesssim \|  D^2_\pw v_\pw \|_{L^2(\omega(E))}. 
\]
The combination of all aforementioned estimates reads
\begin{align*}
& h_E^{-3}\|  [v_\pw]_E \|^2_{L^2(E)}+ h_E^{-1}\|  [\partial v_\pw/\partial\nu_E]_E \|_{L^2(E)} ^2
  \lesssim   \|  D^2_\pw v_\pw \|_{L^2(\omega(E))}^2 \\ & \quad +
  h_E^{-2}  \sum_{z \in {\mathcal V} (E)} |[v_\pw]_E(z)|^2+
    | \fint_E    [\partial v_\pw/\partial\nu_E]_E \ds |^2 .
\end{align*}
The sum of all those estimates over $E\in\E$ plus  $  \trinl v_\pw  \trinr^2_{\pw}$ leads to 
an estimate with  the lower bound  $ \dgnorm{v_\pw}^2 $ up to the weights  $\sigma_1\approx1\approx\sigma_2$.
The finite overlap of the edge-patches  $(\omega(E):E\in\E) $ shows  that the resulting  upper bound
is $\lesssim \| v_\pw\|_h $.
\end{proof}

\begin{proof}[Proof of the upper bound]
The proof of the  asserted inequality starts with triangle inequalities for the jumps of
$v_{\pw}\in H^2(\T)$ and the shape regularity for $h_E\approx h_T$ for $E\in \E(T)$.
This and a Cauchy inequality  $ \|  \nabla v_{\pw} \|_{L^1(E) }\le h_E^{1/2}  \|  \nabla v_{\pw} \|_{L^2(E) }$ lead to 
\[
 j_h(v_{\pw})^2\lesssim 
 \sum_{T\in \T}      \left(    h_T^{-2}  \sum_{z \in {\mathcal V} (T)}    |(v_{\pw}|_T)(z)|^2
 +  h_T^{-1}  \sum_{E \in \E(T) }    \|  \nabla v_{\pw} \|_{L^2(E) }^2  \right) .
\]
 A one-dimensional trace inequality (with a factor $1$ that follows from 1D integration) 
\[
 |(v_{\pw}|_T)(z)|\le h_E^{-1/2}  \|  v_{\pw} \|_{L^2(E) }+ h_E^{1/2}  \|   \nabla v_{\pw} \|_{L^2(E) }
\]
along the edge $E\in\E(T)$  of the triangle $T\in\T$ with vertex $z\in\V(E)$ results in  
\[
 j_h(v_{\pw})^2\lesssim 
 \sum_{T\in \T}      \left(     
 h_T^{-3}    \|  v_{\pw} \|_{L^2(\partial T) }^2 
 +    h_T^{-1}    \|  \nabla v_{\pw} \|_{L^2(\partial T) }^2 \right) 
 \lesssim  \sum_{T\in \T}  \sum_{m=0}^2     | h_\T^{m-2}  v_{\rm pw} |_{H^m(T)} ^2
\]
with standard trace inequalities on $\partial T$ for  $v_{\pw}$ and $\nabla  v_{\pw}$ in the last step.
The right-hand side is  $\sum_{m=0}^2     | h_\T^{m-2}  v_{\rm pw} |_{H^m(\T)} ^2$ as asserted.
The remaining details are omitted for brevity.
\end{proof}

\subsection{Interpolation errors}
The interpolation error estimates are summarised in one theorem.

\begin{thm}[interpolation]\label{int_err}
Any $v_{\rm pw}\in H^2(\T)$ and its Morley interpolation $I_{\rm M} v_{\rm pw} \in \M(\T)$ from Definition \ref{defccMorleyinterpolation} satisfy \\
(a) 
\(\displaystyle 
\sum_{m=0}^2     | h_\T^{m-2}  (v_{\rm pw}- I_{\rm M} v_{\rm pw}) |_{H^m(\T)}
 \lesssim \| (1-\Pi_0)D^2_{\rm pw} v_{\rm pw}\|
+ j_h(v_{\rm pw})\le \| v_{\rm pw}\|_h\);  
\\
(b) \( \displaystyle \sum_{m=0}^2     | h_\T^{m-2}  (v_{\rm pw}- I_{\rm M} v_{\rm pw}) |_{H^m(\T)}
\hspace{-1mm}\approx \hspace{-3mm}\min_{w_\M\in \M(\T)}  \hspace{-2mm} \| v_{\rm pw}- w_\M \|_h  
\hspace{-1mm}\approx \hspace{-3mm}
\min_{w_\M\in \M(\T)}  \hspace{-1mm}  \sum_{m=0}^2    \hspace{-1mm}  | h_\T^{m-2}  (v_{\rm pw}- w_\M)  |_{H^m(\T)}    \).
\end{thm}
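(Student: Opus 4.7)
\medskip\noindent
\textbf{Proof plan for Theorem~\ref{int_err}.}

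\emph{Strategy.} The plan is to compare $I_\M v_\pw$ with the (triangle-by-triangle) local Morley interpolation $I_\M^{\rm loc} v_\pw$ from Definition~\ref{deflocMorleyinterpolation}. On each triangle, Lemma~\ref{interpolationerrorestimatesI}.b applies directly to $I_\M^{\rm loc} v_\pw$ and controls $v_\pw - I_\M^{\rm loc} v_\pw$ by $\|(1-\Pi_0)D^2 v_\pw\|$. The correction $\delta := I_\M^{\rm loc} v_\pw - I_\M v_\pw$ is piecewise $P_2$ with explicit degrees of freedom coming from jumps of $v_\pw$, so it can be controlled by $j_h(v_\pw)$.

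\emph{Step 1 (local piece).} Summing the triangle-wise estimate from Lemma~\ref{interpolationerrorestimatesI}.b gives
\[
\sum_{m=0}^2 | h_\T^{m-2}(v_\pw - I_\M^{\rm loc}v_\pw)|_{H^m(\T)} \lesssim \|(1-\Pi_0) D^2_\pw v_\pw\|.
\]

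\emph{Step 2 (averaging correction).} Compute the Morley DOFs of $\delta|_T\in P_2(T)$ on a triangle $T\in\T$: at a vertex $z\in\V(T)$,
\[
\delta|_T(z) = v_\pw|_T(z) - |\T(z)|^{-1}\!\!\sum_{T'\in\T(z)}\!\! v_\pw|_{T'}(z) = |\T(z)|^{-1}\!\!\sum_{T'\in\T(z)}\!\!(v_\pw|_T(z) - v_\pw|_{T'}(z)),
\]
which is a bounded linear combination of vertex jumps $[v_\pw]_E(z)$ along a chain of edges in $\T(z)$; for $z\in\V(\partial\Omega)$ the boundary convention yields $\delta|_T(z)=[v_\pw]_E(z)$ directly. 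At an edge $E\in\E(T)$,
\[
\fint_E \partial \delta|_T/\partial\nu_E\,ds = \pm \tfrac{1}{2}\fint_E [\partial v_\pw/\partial\nu_E]\,ds,
\]
with the boundary convention covering $E\in\E(\partial\Omega)$.

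\emph{Step 3 (scaling \& inverse estimates).} A reference-element argument for the quadratic $\delta|_T$ shows
\[
\|\delta\|_{L^2(T)}^2 \lesssim h_T^2 \sum_{z\in\V(T)} |\delta|_T(z)|^2 + h_T^4 \sum_{E\in\E(T)}\!\Big|\fint_E \partial \delta/\partial\nu_E\,ds\Big|^2,
\]
and the standard inverse estimates $|\delta|_{H^m(T)}\lesssim h_T^{-m}\|\delta\|_{L^2(T)}$ for $m=1,2$. Plugging in Step~2 and using the shape regularity to handle the finite overlap of patches $\T(z)$ and $\omega(E)$ leads to
\[
\sum_{m=0}^2 | h_\T^{m-2}\delta|_{H^m(\T)}^2 \lesssim \sum_{E\in\E} h_E^{-2}\!\!\sum_{z\in\V(E)}|[v_\pw]_E(z)|^2 + \Big|\fint_E[\partial v_\pw/\partial\nu_E]\,ds\Big|^2 = j_h(v_\pw)^2.
\]
A triangle inequality combining Steps~1 and 3 yields the first inequality of (a); the second inequality $\|(1-\Pi_0)D^2_\pw v_\pw\| + j_h(v_\pw) \le \sqrt{2}\,\|v_\pw\|_h$ is immediate from \eqref{common:norm}.

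\emph{Step 4 (proof of (b)).} For any $w_\M\in\M(\T)$, Remark~\ref{remark3.1} gives $j_h(w_\M)=0$, hence $j_h(v_\pw)=j_h(v_\pw-w_\M)$, and $D^2_\pw w_\M$ is piecewise constant so that $(1-\Pi_0)D^2_\pw v_\pw = (1-\Pi_0)D^2_\pw(v_\pw-w_\M)$. Combining with (a) yields
\[
\sum_{m=0}^2 |h_\T^{m-2}(v_\pw-I_\M v_\pw)|_{H^m(\T)} \lesssim \|(1-\Pi_0)D^2_\pw v_\pw\| + j_h(v_\pw) \lesssim \|v_\pw-w_\M\|_h.
\]
Taking the minimum over $w_\M\in\M(\T)$ bounds the left-hand side by the middle expression of (b). The reverse $\min_{w_\M}\|v_\pw-w_\M\|_h \le \|v_\pw-I_\M v_\pw\|_h\lesssim \sum_{m=0}^2|h_\T^{m-2}(v_\pw-I_\M v_\pw)|_{H^m(\T)}$ follows from Theorem~\ref{lemmaonhnormisanorm}. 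The same theorem controls $\|v_\pw-w_\M\|_h$ by $\sum_m|h_\T^{m-2}(v_\pw-w_\M)|_{H^m(\T)}$ for any $w_\M\in\M(\T)$, while taking $w_\M=I_\M v_\pw$ reverses this inequality. All three quantities are thus equivalent.

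\emph{Main obstacle.} The delicate step is Step~3: verifying the scaling of the $P_2$-seminorms against the Morley DOFs on $T$ (with powers $h_T^2$ for vertex values and $h_T^4$ for normal-derivative averages) so that the resulting bound matches precisely the scaling in the definition \eqref{eq:jh} of $j_h$; a reference-element argument combined with shape regularity delivers the scaling but requires careful bookkeeping of the Jacobian factors under the affine transformation.
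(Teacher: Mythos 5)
Your proposal is correct and follows essentially the same route as the paper: reduce to piecewise quadratics via $I_\M^{\rm loc}$ with Lemma~\ref{interpolationerrorestimatesI}.b, then control the correction $\delta = I_\M^{\rm loc}v_\pw - I_\M v_\pw$ by expressing its Morley degrees of freedom as chains of vertex jumps and averaged normal-derivative jumps, and scale with an inverse-estimate/dual-basis argument before summing via finite overlap; part (b) then follows from $j_h$ and $\Pi_0$ invariance under subtraction of $w_\M$ together with Theorem~\ref{lemmaonhnormisanorm}. The only cosmetic differences are that the paper writes the $L^2$-scaling explicitly through the Morley dual basis $\|\psi_z\|_{L^2(T)}\approx h_T$, $\|\psi_E\|_{L^2(T)}\approx h_T^2$ rather than by a reference-element transformation, and that it runs the vertex-jump chain bound through an arithmetic-mean/convex-hull observation instead of your direct telescoping identity; these are equivalent.
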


\begin{proof}[Proof of (a)]
{\it The first step } reduces the analysis to piecewise quadratic functions by the piecewise
Morley interpolation $I_\M^{\rm loc}$. Definition \ref{deflocMorleyinterpolation} shows
 $\int_E \nabla (I_\M^{\rm loc} v_\pw- v_\pw)|_T\, \ds=0$ for an edge $E\in\E(T)$ of the triangle $T$
and therefore $D^2 (I_\M^{\rm loc} v_\pw)|_T=\Pi_0  D^2  v_\pw |_T$ a.e. in $T \in \T$.
Notice that the piecewise defined Morley interpolation $v_2:= I_\M^{\rm loc}v_\pw \in P_2(\T)$
is discontinuous   (and shares none of the compatibility or boundary conditions)  in general. The 
 interpolation error estimates of Lemma~\ref{interpolationerrorestimatesI}.b read
\[
\sum_{m=0}^2 h_T^{m-2} |  v_2- v_\pw |_{H^m(T)} \le 2  \|  (1-\Pi_0) D^2 v_\pw\|_{L^2(T)}.
\]
This and a triangle inequality show that it remains to prove that  $v_\M:= I_\M v_\pw$ satisfies 
\begin{equation}\label{eqn:cc:morleyinterpolationerrorestimateproof1}
\sum_{m=0}^2 h_T^{m-2} |  v_2- v_\M |_{H^m(T)} \lesssim  j_h(v_\pw,T) 
\end{equation}
for the jump terms localised to a neighbourhood $\Omega(T)$ of $T \in \T$ as follows. The neighbourhood $\Omega(T)$ is the
interior of the union $\cup\{ K\in\T:\dist(T,K)=0 \}$ of $T \in \T$ plus one layer of triangles in $\T$ around. Then
\[
 j_h(v_\pw,T)^2:=  
\sum_{z \in \V(T) }   \sum_{F \in \E(z) }  h_F^{-2} |[v_{\pw}]_F(z)|^2+
 \sum_{E \in \E(T)}  \left| \fint_E  \jump{\frac{\partial v_{\pw}}{\partial \nu_E}} \ds \right|^2 
\]  
is the contribution from $T$ and its neighbourhood  $\Omega(T)$  to the full jump term 
$j_h(v_\pw)^2$ with the spider $\E(z):=\{  F\in\E : z\in \V(F)\}$ of edges with one end-point $z\in\V(T)$. 

{\it The second step } reduces the analysis to piecewise quadratic functions. The first obervation
is that the averaging of the degrees of freedom in the definition of $I_\M$ merely employs the data 
of  $v_2 =I_\M^{\rm loc} v_\pw$ in the sense that $v_\M=I_\M v_\pw= I_\M v_2$. This explains why 
$ j_h(v_\pw,T)=  j_h(v_2,T)$ in the asserted estimate 
\eqref{eqn:cc:morleyinterpolationerrorestimateproof1}. The second observation is that 
the left-hand side of \eqref{eqn:cc:morleyinterpolationerrorestimateproof1} involves the polyonomial
$ (v_2- v_\M)|_T\in P_2(T)$ that allows for inverse estimates
\[
\sum_{m=0}^2 h_T^{m-2} |  v_2- I_\M v_2 |_{H^m(T)} \lesssim   h_T^{-2} \|  v_2- I_\M v_2\|_{L^2(T)}.
\] 
The overall conclusion is that it suffices to prove, for all $v_2\in P_2(\T)$, that 
\begin{equation}\label{eqn:cc:morleyinterpolationerrorestimateproof2}
h_T^{-4} \|  v_2- I_\M v_2\|^2_{L^2(T)} \lesssim  j_h(v_2,T)^2. 
\end{equation}
In fact,  \eqref{eqn:cc:morleyinterpolationerrorestimateproof2} and the aforementioned 
arguments lead to a localised form of the assertion. The sum over all $T\in\T$ and the bounded
overlap of $(\Omega(T):T\in\T)$ then conclude the proof of the theorem.

{\it The third step } reduces the proof of  \eqref{eqn:cc:morleyinterpolationerrorestimateproof2}
to six coefficients. The six degrees of freedom 
on a triangle $T\in\T$ are the three point evaluations $\delta_z$ at the three vertices $z\in \V(T) $
and the three integral means of the normal derivatives 
$ \fint_E \partial_{\nu_E}\bullet \ds$ along the three edges $E\in\E(T)$. 
The six dual basis functions  $\psi_z$ for $z\in \V(T)$ and $\psi_E$ for $E\in\E(T)$    
in $P_2(T)$ are defined by the duality relations $\psi_E(z)=0=\fint_E \partial_{\nu_E}\psi_z \ds$
and  $\psi_z(z)=1=\fint_E \partial_{\nu_E}\psi_E \ds$
for all $z\in\V(T)$ and $E\in \E(T)$, while  $\psi_y(z)=0=\fint_E \partial_{\nu_E}\psi_F \ds$
for all vertices $z\ne y\in \V(T)$ and edges $E\ne F\in \E(T)$. Those functions are known and given
explicitly (e.g., in \cite{CCDGJH14} in the context of a short implementation of the Morley 
FEM in 30 lines of Matlab)  with a scaling (which is generally understood and follows from the explicit formulas)
\[
\| \psi_z \|_{L^2(T)}\approx |T|^{1/2}\approx h_T \quad\text{and}\quad \| \psi_E \|_{L^2(T)}
\approx h_T |T|^{1/2}\approx h_T^2
\]
for  all $z\in\V(T)$ and $E\in \E(T)$. On the other hand, given the dual basis of $P_2(T)$,
any function  $w_2:= v_2-  v_\M  \in P_2(T)$ for  $ v_\M:=I_\M v_2$
allows for a representation 
\[
w_2= v_2-  v_\M =\sum_{z\in \V(T)} w(z)\,  \psi_z  + \sum_{E\in \E(T)} w(E) \,  \psi_E \quad\text{in }T
\]
with the real coefficients  $w(z):= v_2|_T(z)-v_\M(z)$ 
and $w(E):= \fint_E \partial_{\nu_E} v_2|_T  \ds - \fint_E \partial_{\nu_E} v_\M  \ds$
for $z\in\V(T)$ and $E\in \E(T)$. Notice that the contributions of the piecewise quadratic 
$v_2$ are taken from $T\in\T$
and this is written explicitly by $v_2|_T$ in the coefficients, while the corresponding values 
of the Morley function $v_\M\in \M(\T)$ are  independent of $T$ as long as $z\in\V(T)$
or $E\in\E(T)$.  Given the coefficients $w(z)$ and $w(E)$, the triangle inequality in $L^2(T)$
and the scaling  of the dual basis functions lead to
\begin{equation}\label{eqn:cc:morleyinterpolationerrorestimateproof3}
\|  v_2- v_\M\|_{L^2(T)} \lesssim 
h_T \sum_{z\in \V(T)} |w(z)|     + h_T^2  \sum_{E\in \E(T)} |w(E)|.
\end{equation}
{\it The fourth step } analyses the coefficients  in 
 \eqref{eqn:cc:morleyinterpolationerrorestimateproof3}.
Let the triangles    $\T(z):=\{ T\in \T :z\in\V(T)\}  = \{T(1),\dots, T(J)\}$  at the vertex $z\in\V$ 
be enumerated such that  $T(j)$ and $T(j+1)$ share an edge 
$\partial T(j)\cap\partial T(j+1)=:E(j)\in \E(z)$ for  $j=1,\dots, J$. For an interior vertex $z\in\V(\Omega)$,
the patch is closed and then $T(1)$ and $T(J)$ share an edge $\partial T(1)\cap\partial T(J)=:E(J)\in \E(z)$
as well.   Define $x_j:=  ((v_2- v_\M)|_{T(j)})(z)$ for $j=1,\dots,J$ and observe for an interior vertex
$z\in\V(\Omega)$ that $\sum_{j=1}^J x_j=0$ (from the choice of $v_\M(z)$ as the arithmetic mean 
of the ${v_2}|_{T(j)}$) and  that 
\[
\sum_{E\in \E(z)} |\jump{v_2}(z)|= \sum_{j=1}^J  |x_{j+1}-x_j|
\]
with $x_{J+1}:=x_1$ (recall $z \in \V(\Omega)$ here). Since the arithmetic mean of the real numbers 
$x_1,\dots,x_J$ vanishes, zero belongs to their convex hull;
whence $\underline{m}:= \min_{j=1,\dots,J} x_j\le 0\le \max_{j=1,\dots,J} x_j=:\overline{m}$. A triangle inequality in this  
sequence $x_1,\dots,x_J$ shows that $\overline{m}-\underline{m} \le \sum_{j=1}^J  |x_{j+1}-x_j|$ (even with an omitted  factor $1/2$).
It follows $|x_1|,\dots, |x_J|\le  \sum_{j=1}^J  |x_{j+1}-x_j|$ and so, for a triangle $T\in\T(z)$ 
in the notation of \eqref{eqn:cc:morleyinterpolationerrorestimateproof3},
\begin{equation}\label{eqn:cc:morleyinterpolationerrorestimateproof4}
|w(z)|\le \sum_{E\in \E(z)} |\jump{v_2}(z)|\le J^{1/2} \sqrt{  \sum_{E\in \E(z)} |\jump{v_2}(z)|^2}
\end{equation}
follows (with a Cauchy inequality in $\mathbb{R}^J$ in the end).
This is suboptimal and the best constant in a 
squared version of this argument is contained in 
\cite[Appendix C]{aCCP}. 
Observe that $J\lesssim 1$ is bounded from above by the shape regularity of the triangulation $\T$.

In the remaining case of a vertex $z\in\V(\partial\Omega)$ on the boundary, $v_\M(z)=0$ and, 
in the above notation   $\T(z)=\{ T \in \T :z\in\V(T)\}  = \{T(1),\dots, T(J)\}$   and 
 $x_j =  ((v_2- v_\M)|_{T(j)})(z)= (v_2|_{T(j)})(z)$ for $j=1,\dots,J$. The homogeneous 
 boundary conditions enter in the jump terms for $ E(1):=T_1\cap\partial\Omega $ and 
 $E(J):=T_J\cap\partial\Omega $ and 
 \[
\sum_{E\in \E(z)} |\jump{v_2}(z)|=   |x_1|+ |x_J|+ \sum_{j=1}^{J-1}  |x_{j+1}-x_j| . 
\] 
Triangle inequalities show  $|x_1|,\dots, |x_J|\le  |x_1|+ |x_J|+ \sum_{j=1}^{J-1}  |x_{j+1}-x_j| $ 
(even with an omitted  factor $1/2$) and the above arguments lead to \eqref{eqn:cc:morleyinterpolationerrorestimateproof4} as well. 
(The optimal constant  for this argument may be found in 
\cite[Lemma 4.2]{CH17}.)
Recall the design of the Morley interpolation in Definition~\ref{defccMorleyinterpolation} with
the arithmetic mean 
$ \fint_E \partial_{\nu_E} v_\M  \ds=\fint_E \mean{\frac{\partial v_2}{\partial \nu_E}} \ds$
of the two normal traces for an interior edge $E=\partial T_+\cap \partial T_- \in\E(\Omega)$. This  leads 
to the edge contribution
\[
w(E)= \fint_E \partial_{\nu_E} v_2|_T  \ds - \fint_E  \mean{\frac{\partial v_2}{\partial \nu_E}}   \ds
= \pm \frac{1}{2} \fint_E  \jump{\partial_{\nu_E} v_2}  \ds
\]
in  \eqref{eqn:cc:morleyinterpolationerrorestimateproof3} with a sign $\pm$ for $T=T_\pm$.
The boundary conditions for a  boundary edge $E\in \E(\partial \Omega) $ and the jump convention
for $[\bullet]_E$ (recall that  $\nu_E$ points outwards for $E\subset\partial\Omega$) 
 directly show $w(E)=  \fint_E  \jump{\partial_{\nu_E} v_2}  \ds$.
It follows  
\begin{equation}\label{eqn:cc:morleyinterpolationerrorestimateproof5}
|w(E)|\le | \fint_E \jump {\partial_{\nu_E} v_2} \ds | \text{ for any }E\in\E(T).
\end{equation}
{\it The fitfh step } finishes the proof.  Recall that the coefficients $w(z)$ for $z\in\V(z)$  and 
$w(E)$ for any $E\in\E(T)$  in 
\eqref{eqn:cc:morleyinterpolationerrorestimateproof3}
satisfy 
\eqref{eqn:cc:morleyinterpolationerrorestimateproof4}-\eqref{eqn:cc:morleyinterpolationerrorestimateproof5}.
The resulting estimate reads 
\[ 
h_T^{-4} \|  v_2- v_\M\|_{L^2(T)}^2   \lesssim 
h_T^{-2}  \sum_{z\in \V(T)}   \sum_{E\in \E(z)} |[v_2]_E(z)|^2
+ \sum_{E\in \E(T)}  | \fint_E [{\partial_{\nu_E} v_2}]_E \ds |^2  \approx  j_h(v_2,T)^2
\]
with  the shape regularity $h_F\approx h_T$ for $F\in \E(z)$ and $z\in\V(T)$ in the end. 
This concludes the proof of  \eqref{eqn:cc:morleyinterpolationerrorestimateproof2} and 
thus that of (a)  as outlined at the end of the second step. 
\end{proof}

\begin{proof}[Proof of (b)] Given any $w_\M\in \M(\T)$, part (a) shows that the first term $T_1$ in 
the equivalence (b) is 
\(
T_1\lesssim  \trinl  v_{\rm pw} - w_\M  \trinr_{\text{pw}} + j_h(v_{\rm pw}- w_\M)
\le \| v_{\rm pw} - w_\M\|_h=: T_2
\)
with $j_h(v_{\rm pw})= j_h(v_{\rm pw}- w_\M)$ in the last step. 
Theorem~\ref{lemmaonhnormisanorm} applies to $v_{\rm pw} - w_\M \in H^2(\T)$
and proves $T_2\lesssim  \sum_{m=0}^2     | h_\T^{m-2}  (v_{\rm pw}-w_\M) |_{H^m(\T)} =:T_3$.
The estimates $T_1\lesssim T_2\lesssim T_3$ hold for all $w_\M\in \M(\T)$ and so for the respective 
minima  as well. Since $I_\M v_\pw \in \M(\T)$, the remaining estimate $\min_{w_\M\in \M(\T)} T_3
\le T_1$ is obvious. 
\end{proof}

\begin{rem}[$I_\M J  I_\M = I_\M$ { in $H^2(\T)$}]
Let $J$ be any  right-inverse of $I_\M$ in  the sense of \eqref{eq:right_inverse}. 
Since  $I_\M J $ is identity in $\M(\T)$, $I_\M J  I_\M v_\pw= I_\M v_\pw$ holds for any $v_\pw \in H^2(\T)$.
\end{rem}

\subsection{Approximation errors} \label{sec:approximation_errors}
The subsequent theorem discusses the approximation properties of $J \circ I_\M$ for piecewise smooth and 
piecewise quadratic functions. It is formulated in terms of $\|\bullet\|_h\approx \dgnorm{\bullet}$ and 
the norm equivalence implies an (undisplayed) analog  
for $\dgnorm{\bullet}$ as well.

\begin{thm}[approximation]\label{lemmaccapproximationproperties}
Any $v_\pw\in H^2(\T)$ and $v_2\in P_2(\T)$ satisfy (a)-(d).
\\
(a) \quad$\|  v_\pw - J I_\M v_\pw\|_h\lesssim \|   (1-\Pi_0)D^2_\pw v_\pw\|_{L^2(\Omega)}+
\min_{v\in H^2_0(\Omega)}  \|  v_\pw-v \|_h$;\\
(b)\quad  $\sum_{m=0}^2 |  h_\T^{m-2} (v_\pw - J I_\M v_\pw)|_{H^m(\T)}
\lesssim  \|   (1-\Pi_0)D^2_\pw v_\pw\|_{L^2(\Omega)}$ \\
\hspace*{79mm} $+
\min_{v\in H^2_0(\Omega)}  \sum_{m=0}^2 |  h_\T^{m-2} (v_\pw - v)|_{H^m(\T)} $; \\
(c)\quad 
$\|  v_2 - J I_\M v_2 \|_h\approx  \min_{v\in H^2_0(\Omega)}  \|  v_2-v \|_h $\\
 \hspace*{2.7cm} $ \approx 
 \sum_{m=0}^2 |  h_\T^{m-2} (v_2 - J I_\M v_2)|_{H^m(\T)} 
\approx  \min_{v\in H^2_0(\Omega)}  \sum_{m=0}^2 |  h_\T^{m-2} (v_2 - v)|_{H^m(\T)} $;
(d)\quad $ \|  v_2 - J I_\M v_2 \|_{ H^s(\T)} \lesssim  h_{\max}^{2-s} \min_{v\in H^2_0(\Omega)}  \|  v_2-v \|_h$ \, holds for any $0\le s \le 2$.
\end{thm}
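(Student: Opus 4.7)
\emph{Plan.} The four claims follow one from another by triangle inequalities once (a) is established via the companion operator and the Morley interpolation estimates of Theorem \ref{int_err}.

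\emph{Proof of (a).} For any $v\in V$, the triangle inequality in the norm $\|\bullet\|_h$ gives
$\|v_\pw-JI_\M v_\pw\|_h\le \|v_\pw-v\|_h+\|v-JI_\M v_\pw\|_h$.
Since $v-JI_\M v_\pw\in V$ is jump-free, the second summand equals $\trinl v-JI_\M v_\pw\trinr$. Set $w_\M:=I_\M v_\pw\in \M(\T)$ and split
$\trinl v-Jw_\M\trinr\le \trinl v-w_\M\trinr_\pw+\trinl w_\M-Jw_\M\trinr_\pw$.
Apply Lemma \ref{companion}(d) to the second summand (minorizing by $v$), and then Theorem \ref{int_err}(a) to $\trinl v_\pw-I_\M v_\pw\trinr_\pw$ after inserting $v$ in a further triangle inequality; the emerging jump term $j_h(v_\pw)=j_h(v_\pw-v)\le \|v_\pw-v\|_h$ is absorbed in $\|v_\pw-v\|_h$. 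Taking the infimum over $v\in V$ proves (a).

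\emph{Proof of (b).} The same triangle-inequality strategy is carried through with the norm $\sum_{m=0}^2|h_\T^{m-2}\bullet|_{H^m(\T)}$. The Morley piece is again handled by Theorem \ref{int_err}(a), where the parasitic term $j_h(v_\pw)$ is dominated by $\|v_\pw-v\|_h$ and hence by $\sum_{m=0}^2|h_\T^{m-2}(v_\pw-v)|_{H^m(\T)}$ via Theorem \ref{lemmaonhnormisanorm}. The main technical step is to show, for any $w_\M\in \M(\T)$, the local equivalence
\[
\sum_{m=0}^2 h_T^{m-2}|w_\M-Jw_\M|_{H^m(T)}\lesssim |w_\M-Jw_\M|_{H^2(T)}\quad\text{for every }T\in\T.
\]
This is a triangle-wise Poincar\'e--Friedrichs inequality: by Lemma \ref{companion}(a), $w_\M-Jw_\M$ vanishes at the three vertices of $T$, so the usual scaling argument on a piecewise polynomial of bounded degree on $T$ (or on its HCT refinement $\mathcal K(T)$, using an inverse estimate to pass from the three subtriangles to $T$) gives the $L^2$ and $H^1$ bounds. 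Summing over $T$ and invoking Lemma \ref{companion}(d) again bounds the resulting $\trinl w_\M-Jw_\M\trinr_\pw$ by $\trinl v_\pw-I_\M v_\pw\trinr_\pw+\trinl v_\pw-v\trinr_\pw$, and the proof of (b) concludes exactly as in (a).

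\emph{Proof of (c).} For $v_2\in P_2(\T)$ the Hessian $D^2 v_2$ is piecewise constant, so $(1-\Pi_0)D^2 v_2=0$ and the first summand on the right-hand sides of (a)-(b) disappears. The reverse bounds
\[
\min_{v\in V}\|v_2-v\|_h\le \|v_2-JI_\M v_2\|_h,\quad \min_{v\in V}\sum_{m=0}^2|h_\T^{m-2}(v_2-v)|_{H^m(\T)}\le \sum_{m=0}^2|h_\T^{m-2}(v_2-JI_\M v_2)|_{H^m(\T)}
\]
follow from the admissible choice $v:=JI_\M v_2\in V$. The four quantities in (c) are therefore pairwise equivalent; the implicit bridge between the two types of minima is provided by Theorem \ref{lemmaonhnormisanorm} (one direction) and by the previously established chain $\|\bullet-JI_\M\bullet\|_h \lesssim \min \|\bullet-v\|_h \lesssim \min\sum_m|h_\T^{m-2}(\bullet-v)|_{H^m(\T)}$ (the other).

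\emph{Proof of (d).} The cases $s\in\{0,1,2\}$ are immediate corollaries of (c): the $L^2$, $H^1$, and $H^2$ contributions therein yield the prefactors $h_{\max}^2$, $h_{\max}$, and $1$ respectively (after estimating local factors $h_T\le h_{\max}$). For a non-integer $0<s<2$ one interpolates: applying the standard Sobolev interpolation inequality $\|w\|_{H^s(T)}\lesssim \|w\|_{L^2(T)}^{1-s/2}|w|_{H^2(T)}^{s/2}$ triangle-wise (or its integer-integer analogue in $H^1$) to $w:=v_2-JI_\M v_2$, using the bounds from (c), yields the prefactor $h_{\max}^{2(1-s/2)}\cdot 1^{s/2}=h_{\max}^{2-s}$, after a H\"older inequality in the sum over $T\in\T$.

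\emph{Anticipated obstacle.} The one non-routine ingredient is the triangle-wise Poincar\'e inequality in the proof of (b): it requires identifying enough vanishing data for $w_\M-Jw_\M$ on each $T$ (here, three vertex values via Lemma \ref{companion}(a)) to control $L^2$ and $H^1$ norms by $|w_\M-Jw_\M|_{H^2(T)}$ with a constant depending only on the shape regularity. Everything else is bookkeeping with triangle inequalities and the results already established.
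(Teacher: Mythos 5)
Your outline for (a), (c), (d) follows the paper's route closely, but in (b) you take a genuinely different path, and in (c) your logic is incomplete as stated.

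For (b), the paper's key observation is that the right-inverse property $I_\M J = \mathrm{id}$ on $\M(\T)$ turns $w_\M - Jw_\M$ into the \emph{standard} Morley interpolation error of $Jw_\M \in H^2_0(\Omega)$, i.e.\ $w_\M - Jw_\M = I_\M(Jw_\M) - Jw_\M$. Then Lemma~\ref{interpolationerrorestimatesI}.b gives $\sum_{m=0}^2 h_T^{m-2}|w_\M - Jw_\M|_{H^m(T)} \le 2\,|w_\M - Jw_\M|_{H^2(T)}$ directly, with an explicit constant $2$ and no recourse to finite-dimensionality. Your alternative scaling/Poincar\'e argument is also valid in substance, but it is more work and slightly under-specified: vanishing at the three vertices alone suffices (the kernel of $|\cdot|_{H^2(T)}$ in the fixed space $(P_2+HCT+P_8)|_T$ restricted to vertex-vanishing functions is trivial, since an affine function with three zeros vanishes), but you should state why this is enough and that the space is fixed on the reference triangle after scaling; the parenthetical remark about ``using an inverse estimate to pass from the three subtriangles to $T$'' is a red herring, since an inverse estimate runs in the wrong direction. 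Note also that Lemma~\ref{companion}(c) gives vanishing integral edge-normal-derivatives, which—combined with the vertex values—says precisely that the local Morley interpolation of $w_\M - Jw_\M$ vanishes; following that thread would have recovered the paper's one-line argument.

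For (c), you correctly establish $A\approx B$ and $C\approx D$ for the four quantities (in the order written in the statement) together with $A\lesssim C$ and $B\lesssim D$ (Theorem~\ref{lemmaonhnormisanorm}). But both ``bridges'' you cite run in the same direction, so as written you do not get the reverse bound $C\lesssim A$ (or $C\lesssim B$), which is what closes the loop. This link is available from the intermediate form of your own proof of (b): for $v_\pw = v_2 \in P_2(\T)$ the extra term $\|(1-\Pi_0)D^2_\pw v_2\|$ vanishes and $j_h(v_2)=j_h(v_2-v)\le\|v_2-v\|_h$, so one directly gets $\sum_m|h_\T^{m-2}(v_2-JI_\M v_2)|_{H^m(\T)}\lesssim \|v_2-v\|_h$ for any $v\in V$; the paper writes this out explicitly and you should too. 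Finally, for (d) you gloss over the dependence of the Sobolev-interpolation constants on the triangle shape (the paper handles this by a transformation to a reference triangle), and your stated $L^2$--$H^2$ interpolation inequality should use the full $H^2$ norm on the right rather than the seminorm; these are minor and easily repaired.
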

{ \begin{rem} Theorem~\ref{lemmaccapproximationproperties}  implies that $P=Q= J \circ I_\M$ is a  quasi-optimal smoother with constant $\Lambda_{\rm P}=\Lambda_{\rm Q}$  that depends only on the
shape regularity of the triangulation. 
\end{rem}}
\begin{rem}[remainder in (a)-(b)]
The extra term  $\|   (1-\Pi_0)D^2_\pw v_\pw\|_{L^2(\Omega)}$ in the upper bound will vanish
for piecewise quadratic functions but cannot be omitted  in (a)-(b). For a proof of the latter statement by contradiction 
consider some $v_\pw \in H^2_0(\Omega)\setminus (HCT(\T)+P_8(\T) )$. Since  
$ J I_\M v_\pw\in HCT(\T)+P_8(\T)$, the left-hand side in (a)-(b) is positive, while 
$v=v_\pw  \in H^2_0(\Omega)$ leads to a right-hand side zero if the  term 
$\|   (1-\Pi_0)D^2_\pw v_\pw\|_{L^2(\Omega)}$ was neglected. 
\end{rem}

\begin{proof}[Proof of (a)] 
Theorem~\ref{lemmaonhnormisanorm} implies the first estimate 
(\ref{eqccproofofapart1}.a) below and 
Theorem~\ref{int_err}.a  asserts the second (\ref{eqccproofofapart1}.b) for the Morley interpolation 
$v_\M:= I_\M v_\pw$ of $v_\pw\in H^2(\T)$ 
in  
\begin{equation}\label{eqccproofofapart1}
\|  v_\pw- v_\M\|_h \lesssim  \sum_{m=0}^2     | h_\T^{m-2}  (v_{\rm pw} - v_\M)|_{H^m(\T)} 
\lesssim  \| (1-\Pi_0)D^2_{\rm pw} v_{\rm pw}\|_{L^2(\Omega)}+ j_h(v_{\rm pw}).
\end{equation}
Notice that  $j_h(v_\M-Jv_\M)=0$ implies $\|  v_\M- J v_\M\|_h= \trinl  v_\M - J v_\M  \trinr_{\text{pw}}
\lesssim  \trinl  v_{\rm \M} - v  \trinr_{\text{pw}}$ for any $v\in V$ from  Lemma~\ref{companion}.d   
 in the last step. This and  a triangle inequality,   \eqref{eqccproofofapart1}, 
 and  $ \trinl  v_{\text{pw}} - v  \trinr_{\text{pw}}+j_h(v_{\rm pw})\le  \sqrt{2}\|  v_{\pw} - v  \|_h $ show
\begin{align}
\|  v_\M- J v_\M  \|_h\lesssim   \trinl  v_{\pw} - v  \trinr_{\text{pw}}+ \trinl  v_{\rm pw} - v_\M  \trinr_{\text{pw}} 
 \label{eqccproofofapart1a}
\lesssim   \| (1-\Pi_0)D^2_{\rm pw} v_{\rm pw}\|_{L^2(\Omega)}+     \|  v_{\pw} - v  \|_h .
 \end{align}
This and a triangle inequality   $\|  v_\pw - J I_\M v_\pw\|_h\le \|  v_\pw - v_\M\|_h+\|  v_\M - J v_\M\|_h$ and 
 \eqref{eqccproofofapart1}-\eqref{eqccproofofapart1a} conclude the proof of (a). 
\end{proof}

\begin{proof}[Proof of (b)]  Adapt the notation of part (a) and recall that 
Theorem~\ref{int_err}.a provides (\ref{eqccproofofapart1}.b), the second
estimate in  \eqref{eqccproofofapart1}.  Since $v_\M-Jv_\M= I_\M Jv_\M-Jv_\M$ 
(from \eqref{eq:right_inverse}),  Lemma~\ref{interpolationerrorestimatesI} controls this interpolation error of 
$Jv_\M\in H^2_0(\Omega)$ and shows 
\[
\sum_{m=0}^2     | h_\T^{m-2}  (v_\M - J v_\M)|_{H^m(\T)}     \le 2   \trinl  v_\M - Jv_\M  \trinr_{\text{pw}}
\lesssim    \| (1-\Pi_0)D^2_{\rm pw} v_{\rm pw}\|_{L^2(\Omega)}+ \|  v_\pw - v  \|_h
\]
with    \eqref{eqccproofofapart1a}  in the last step.
Theorem~\ref{lemmaonhnormisanorm}  applies to $v_\pw - v\in H^2(\T)$. The combination  of the resulting estimate 
with the previous one concludes the proof of (b).
\end{proof}

\begin{proof}[Proof of (c)] The assertions (a)-(b) apply to $v_\pw:=v_2\in P_2(\T)$ and the extra term $\| (1-\Pi_0)D^2_{\rm pw} v_{\rm pw}\|_{L^2(\Omega)}$ vanishes. The resulting
estimates allow for obvious converse inequalities and so prove, for $v_2\in P_2(\T)$ 
and $v_\M:= I_\M v_2\in \M(\T)$,   that 
\begin{align*}
&\|  v_2 - J v_\M \|_h\approx 
\min_{v\in H^2_0(\Omega)}  \|  v_2-v \|_h \\  &\quad \lesssim 
 \sum_{m=0}^2 |  h_\T^{m-2} (v_2 - J v_\M)|_{H^m(\T)} 
\approx
\min_{v\in H^2_0(\Omega)}  \sum_{m=0}^2 |  h_\T^{m-2} (v_2 - v)|_{H^m(\T)} 
 \end{align*}
with  Theorem~\ref{lemmaonhnormisanorm} in between the two  equivalences. 
{ A triangle inequality, the estimate (\ref{eqccproofofapart1}.b), the estimate for $(1-J)v_\M$ in the proof of $(b)$ and \eqref{common:norm}
 applies to $v_\pw:=v_2\in P_2(\T)$  and shows 
 $ \sum_{m=0}^2 |  h_\T^{m-2} (v_2 - J v_\M)|_{H^m(\T)} \lesssim j_h(v_2) +  \|  v_2-v \|_h= j_h(v_2-v)+\|  v_2-v \|_h \le 2 \|  v_2-v \|_h $ for any  $v\in H^2_0(\Omega)$.}
This concludes the proof of (c). 
\end{proof}

\begin{proof}[Proof of (d)]  
The equivalence of the Sobolev-Slobodeckii norm and the norm by interpolation of Sobolev spaces \cite[Remark 9.1]{LM72}, for instance for a fixed 
reference triangle   $T=T_{\rm ref}$ with 
$\constant{}\label{ccint3}(s) =\constant[\ref{ccint3}](s,T_{\rm ref})$, provides for  ${w}:=(v_2-JI_\M v_2 )|_T\in H^2(T)$  the estimate 
\begin{equation}\label{eqininterpolationofSobolevnormosonTc}
 \|{w} \|_{H^s(T)}   \le \constant[\ref{ccint3}](s)\,   \|{w} \|_{ H^{1}(T)}^{2-s}  \|{w} \|_{H^{2}(T)}^{s-1} \quad\text{for }1< s< 2  .
 \end{equation}
 A straightforward transformation of Sobolev norms \cite[Theorem 3.1.2]{Ciarlet}  show
\eqref{eqininterpolationofSobolevnormosonTc} for any triangle  $T\in\T $ with 
$\constant[\ref{ccint3}]  (s)=\constant[\ref{ccint3}]  (s,T) 
= \kappa^{1+s} \constant[\ref{ccint3}](s,T_{\rm ref}) $ for the condition number $\kappa=\sigma_1/\sigma_2 $ of the affine transformation $a+Bx$
of $T_{\rm ref}$ to $T$ with the $2 \times 2 $ matrix $B$ and its positive singular values $\sigma_2 \le \sigma_1$. 
A more detailed analysis \cite{book_nncc} reveals that $\constant[\ref{ccint3}](s)$ exclusively depends on $s$ (but exploits
singularities as $s$ approaches the end-points $0$ and $1$). The estimate \eqref{eqininterpolationofSobolevnormosonTc} shows the first inequality in 
\[
\constant[\ref{ccint3}] (s)^{-2}\, 
\|{w} \|_{H^s(T)}^2   \le    \|{w} \|_{ H^{1}(T)}^{2(2-s)}  \|{w} \|_{H^{2}(T)}^{2(s-1)} 
\le   \|{w} \|_{ H^{1}(T)}^{2} +  \|{w} \|_{ H^{1}(T)}^{2(2-s)}  |{w} |_{H^{2}(T)}^{2(s-1)}
\]
with the subadditivity $(a+b)^p\le a^p + b^p $ for $a,b\ge 0$ and $0<p=s-1<1$ (e.g. from the concavity of $x\mapsto x^p$ for non-negative $x$) in the last step. 
An elementary estimate is followed by the  Young inequality $ab\le a^p/p+b^q/q$ for $p=(2-s)^{-1}$, $q=(s-1)^{-1}$, $a=  \| h_T^{-1} {w} \|_{ H^{1}(T)}^{2(2-s)}$, and
$b= |{w} |_{H^{2}(T)}^{2(s-1)}$ 
to prove
\[
 h_{\max} ^{ 2(s-2)} \|{w} \|_{ H^{1}(T)}^{2(2-s)}  |{w}|_{H^{2}(T)}^{2(s-1)}\le  \|h_T^{-1} {w} \|_{ H^{1}(T)}^{2(2-s)}|{w} |_{H^{2}(T)}^{2(s-1)}\le 
 \|h_T^{-1}  {w} \|_{ H^{1}(T)}^{2}  +     |{w} |_{H^{2}(T)}^{2} .
\]
This and the trivial estimate ${h_{\max} ^{ 2(s-1)} \le \text{\rm diam}(\Omega)^{2(s-1)}}$ leads to 
\[
 \|{w} \|_{H^s(T)}^2 \le \constant[\ref{ccint4}] h_{\max} ^{ 2(2-s)} \left( \|h_T^{-1}  {w} \|_{ H^{1}(T)}^{2}  +   |{w} |_{H^{2}(T)}^{2} \right)
\]
for $\constant{}\label{ccint4}=\constant[\ref{ccint3}] (s)^{2} (1+\text{\rm diam}(\Omega)^{2(s-1)})$. The sum over all those contributions over $T\in\T$ proves
\begin{align*}
\| v_2-JI_\M v_2 \|_{H^s(\T)}&\le  \constant[\ref{ccint4}]^{1/2}  h_{\max}^{ 2-s}  \left(  \| h_\T^{-1}(v_2-JI_\M v_2) \|_{H^1(\T)}+ 
\trinl  v_2-JI_\M v_2\trinr_\pw\right)\\
&\lesssim  h_{\max}^{2-s}  \min_{ v\in V }   \|v_2-v\|_h\
\end{align*}
with Theorem~\ref{lemmaccapproximationproperties}.c in the last step.  This concludes the proof of (d) for $1<s<2$. The assertion (d) is included in 
Theorem~\ref{lemmaccapproximationproperties}.c for $s=0,1,2$. The remaining case $0<s<1$ is similar to the above analysis with 
$\|{w} \|_{H^s(T)}   \le  \constant[\ref{ccint3}] (s) \|{w} \|_{ L^{2}(T)}^{1-s}  \|{w} \|_{H^{1}(T)}^{s} $  replacing \eqref{eqininterpolationofSobolevnormosonTc}
and analogous arguments; hence  further details are omitted.
\end{proof}

\section[dG4plates]{Abstract framework for best-approximation of lower-order methods}\label{sec:abstract}
\subsection{Discretisation}\label{sec:abstractSeconddiscretisation}
Suppose that $V_h\subset H^2(\T)$ is the finite-dimensional trial  and test space of an abstract (discontinuous Galerkin) scheme with a
bilinear form 
\[
A_h:(V_h+\M(\T))\times (V_h+\M(\T))\rightarrow {\mathbb R}
\]
 that is coercive and continuous  with respect to some  norm 
$\|\bullet \|_h$   in  $H^2(\T)$ in the sense that, \text{for all } $v_h,w_h \in V_h$,
\begin{equation}\label{eq:a_ellipticity} 
\alpha \| v_h \|_h^2 \le A_h(v_h,v_h) \text{ and } A_h(v_h, w_h) \le M \|v_h \|_h \|w_h\|_h  
\end{equation}
hold for some universal constants $ 0< \alpha, M<\infty $.
Suppose that $\|\bullet \|_h$ is a norm in  $H^2(\T)$ and equal to the norm $ \trinl \bullet \trinr_\pw:=  a_\pw (\bullet,\bullet)^{1/2}$ in $V+\M(\T)$ 
and stronger in general, i.e., 
\begin{equation}\label{eq:comparabilityofnorms} 
 \text{ (a) } \trinl \bullet \trinr_\pw \le \|\bullet \|_h \text{ in }   H^2(\T)
\quad\text{and}\quad
 \text{ (b) } \trinl \bullet \trinr_\pw=\|\bullet \|_h \text{ in } V+\M(\T).
\end{equation} 
Given a linear operator $JI_\M : V_h \to V $ with the companion operator   $J$  from Lemma~\ref{companion} and the (extended) linear interpolation operator $I_\M$ 
from  Subsection \ref{sec:interpol} 
the discrete problem reads: Given $F\in V^*=H^{-2}(\Omega)$  seek the   discrete solution $u_h \in V_h $ to
\begin{equation}\label{eq:discrete2} 
 A_h(u_h, v_h) = F(J I_\nc v_h) \quad\text{for all  } v_h \in V_h.
\end{equation}
The  Lax-Milgram lemma assures the existence of a unique discrete solution $u_h$ to \eqref{eq:discrete2}. 

\begin{rem}[$\|\bullet \|_h$] The examples of Sections~\ref{sec:DGFEM} and \ref{sec:C0IP} utilize $\|\bullet \|_h$ given in \eqref{common:norm}-\eqref{eq:jh}, but the abstract framework allows more general $A_h$ and $\|\bullet \|_h$ with \eqref{eq:a_ellipticity}-\eqref{eq:comparabilityofnorms} in \eqref{eq:discrete2}.
\end{rem}

{ \subsection{First glance at the analysis}\label{sec:motivation}}

This subsection motivates the abstract conditions and emphasises the relevance of the discrete consistency condition (dcc)
\begin{align} \label{a1}
a_\pw(I_\M u, e_h - I_\M e_h)+ b_h(I_\M u , e_h - I_\M e_h) & \le \Lambda_{\rm dc} \trinl u - I_\M u\trinr_{\pw} \|e_h\|_h
\end{align}
that leads to the best-approximation in terms of $\trinl u- I_\M u \trinr_\pw = \min_{v_2 \in P_2(T)} \trinl u-v_2 \trinr_{\pw}$ from \eqref{eq:Pythogoras}. The test function $e_h:= I_h I_\M u - u_h \in V_h \subset H^2(\T)$ is the discrete approximation of the error $u-u_h$ with $I_\M: H^2(\T) \rightarrow \M(\T)$ from Definition~\ref{defccMorleyinterpolation} and a transfer operator $I_h: \M(\T) \rightarrow V_h$ from Subsection~\ref{sec:transfer} below. For the dGFEM of Section \ref{sec:DGFEM} and the WOPSIP scheme of Section~\ref{sec:wopsip}, $I_h$ is the identity $1$ and otherwise it is controlled nicely (cf. \eqref{eq:mod_interpolation_estimate} below for details) \cite{CarstensenGallistlNataraj2015}. So we may neglect the difference $1-I_h$  for the sake of this first look at the analysis and suppose $I_h=1$. The key identity from the continuous problem \eqref{eq:weakabstract} and the discrete one \eqref{eq:discrete2} reads
\begin{equation} \label{a2}
a(u, JI_\M e_h) =F(J I_\M e_h) =A_h(u_h, e_h).
\end{equation}
\noindent The stability of the scheme $\alpha \|e_h\|^2_h \le A_h(e_h,e_h)\le M \|e_h\|_h^2$ motivates the investigation~of 
\begin{align} \label{a4}
A_h(e_h,e_h) & = a_\pw(e_h,e_h)+b_h(e_h,e_h) +c_h(e_h,e_h)
\end{align}
for the three bilinear forms that define the class of problems in \eqref{eq:bilinear} displayed in Table~\ref{table:summary}. The stability term $c_h(\bullet,\bullet)$ is controlled nicely in harmony with the discrete norm
$\|\bullet \|_h$, while $b_h(\bullet,\bullet)$ drives the method and completes the leading term 
$a_\pw(\bullet,\bullet).$

{
\noindent The definition of $e_h$ in \eqref{a4} leads to 
\begin{align} \label{a4a}
A_h(e_h,e_h) & = a_\pw(I_\M u,e_h)- A_h(u_h,e_h) + b_h(I_\M u,e_h) +c_h(I_\M u ,e_h)
\end{align}}
\medskip \noindent
Since $J$ is a the right-inverse of $I_\M$, \eqref{eq:best_approx} implies $ a_\pw(I_\M u,  I_\M e_h -J I_\M e_h)~=~0$.
This and elementary algebra show
\begin{align}
a_\pw(I_\M u, e_h) & = a_\pw(I_\M u, e_h -I_\M e_h) + a_\pw(I_\M u,  J I_\M e_h). \notag 
\end{align}
This in  combination with \eqref{a2} leads in \eqref{a4a} to
\begin{align} \label{a5}
A_h(e_h,e_h) & = a_\pw( I_\M u, e_h -I_\M e_h) +a_\pw(I_\M u -u,  J I_\M e_h)  \nonumber \\& \quad +b_h( I_\M u, e_h) +c_h( I_\M u, e_h).
\end{align}
The second term in the right-hand side of \eqref{a5} is equal to $a_\pw(I_\M u -u,  J I_\M e_h-I_\M e_h)$ and the stabilisation term is equal to $c_h(I_\M u - u, e_h)$. They are controlled by $\trinl u -I_\M u\trinr_\pw \|e_h\|_h$.  The bilinear form $b_h$ enjoys the miraculous property $b_h(I_\M u, I_\M e_h) =0$ for the discontinuous Galerkin schemes of this paper.  The remaining term on the right-hand side of \eqref{a5} is
$a_\pw(I_\M u, e_h -I_\M e_h) +b_h(I_\M u, e_h -I_\M e_h)$ and  in fact controlled by the dcc \eqref{a1}. The proof of dcc in Section~\ref{sec:C0IP} is one key argument in this paper.
{

\begin{rem} The arguments in this section applies to the case  where $A_h(\bullet,\bullet)$ satisfies an inf-sup condition; (and not the coercivity condition).  The key idea is to estimate the consistency error $F(JI_\M v_h) -A_h(I_\M u, v_h)$ using \eqref{a2}-\eqref{a4} and the orthogonality of the interpolation operator.
\end{rem}
}

\bigskip

\subsection{Transfer operators between $V_h$ and $\M(\T)$} \label{sec:transfer}
Recall $I_\M: H^2(\T) \rightarrow \M(\T)$ from Definition \ref{defccMorleyinterpolation}, 
 and suppose the existence of some constant $\Lambda_\M \ge 0 $ with
\begin{align} \label{eq:Inc_bound}
\|v_h- I_\M v_h \|_h & \le \Lambda_\M \|v_h - v \|_h\qquad\text{for all }v_h \in V_h \text{ and all }v \in V.
\end{align}
Suppose the existence of constants $\Lambda_\nc',M_\nc \ge 0$, and  boundedness  in the sense that 
\begin{align}
 \|v_h- I_\nc v_h \|_h & \le \Lambda_\nc' \|v_h  \|_h\qquad\text{for all }v_h \in V_h ,\notag \\
\label{eq:continuityconstantsccMnc}
 \trinl I_\nc v_h   \trinr_\pw & \le M_\nc  \| v_h \|_h\; \quad\text{ for all }v_h \in V_h .
\end{align}
Apparently $\Lambda_\nc'\le \Lambda_\M$ (with $v=0$) and  $M_\nc\le 1+ \Lambda_\nc'$ (with (\ref{eq:comparabilityofnorms}.a) and a  triangle inequality). 
The possibly smaller constant $M_\nc$ enters in  Theorem \ref{thm:abstract_main}.a., while  $\Lambda_\M $ appears  in 
 Theorem \ref{thm:abstract_main}.b.  and Theorem \ref{thm:aux1} below.  
The above conditions control the transfer from $V_h$ into $\M(\T)$ via $I_\nc: V+V_h+\M(\T)\to \M(\T)$. 

The transfer from  $\M(\T)$ into $V_h$ is modeled by some linear map  $I_h: \M(\T) \rightarrow V_h$  
that  is bounded in the sense that there exists some constant $\Lambda_h>0$ such that
\begin{equation} \label{eq:mod_interpolation_estimate}
\|v_\nc - I_h v_\nc\|_h \le \Lambda_h \trinl v_\nc - v \trinr_\pw
\quad\text{for all }v_\nc \in \M(\T) \text{ and for all }  v \in V.
\end{equation} 

The examples of this paper concern the discrete norm from  \eqref{common:norm}-\eqref{eq:jh} and then the estimates of this subsection follow for 
 piecewise quadratic discrete spaces. 

\begin{example}[\eqref{eq:Inc_bound}-\eqref{eq:continuityconstantsccMnc} hold for  $V_h \subseteq  P_2(\T)$ and  \eqref{common:norm}-\eqref{eq:jh}]\label{ex:new1} \label{ex:new2} 
Suppose that the discrete norm $\|\bullet\|_h$  is defined by \eqref{common:norm}-\eqref{eq:jh} and  $V_h \subseteq  P_2(\T)$. 
Then \eqref{eq:comparabilityofnorms} and  \eqref{eq:Inc_bound}-\eqref{eq:continuityconstantsccMnc} follow. 
\end{example} 

\begin{proof}[Proof of \eqref{eq:Inc_bound}-\eqref{eq:continuityconstantsccMnc}]
Given any $v_2\in P_2(\T)$ and any $v\in V$, a  triangle inequality shows
\[
\|v_2- I_\M v_2 \|_h \le \|v_2 - JI_\M v_2 \|_h + \| I_\M v_2 - JI_\M v_2 \|_h=: t_1+t_2.
\]
Theorem~\ref{lemmaccapproximationproperties}.c controls the first term $t_1:=\|v_2 - JI_\M v_2 \|_h \lesssim \|v_2 -v\|_h$ on the right-hand side.
 Since $j_h (v_\M)=0$ in \eqref{eq:jh} vanishes for $v_\M:=I_\M v_2 \in \M(\T)$, the second term 
$ t_2:=\trinl  v_\M  - Jv_\M \trinr_\pw\lesssim  \trinl v_\M -  v \trinr_\pw$ with  Lemma~\ref{companion}.d  in the last step.
Since  $\|(1-\Pi_0) D^2_\pw v_2\|=0$ vanishes for $v_2 \in P_2(\T)$, 
Theorem \ref{int_err}.a shows 
\(\trinl v_\M -  v_2\trinr_\pw \lesssim j_h (v_2) = j_h (v_2-v)   \le   \| v_2 -  v \|_h \)
with  \eqref{common:norm}-\eqref{eq:jh} in the last two steps.
This and a triangle inequality prove $ t_2\lesssim   \| v_2 -  v \|_h$.
The combination of the estimates for $t_1+t_2\lesssim   \| v_2 -  v \|_h$
proves  \eqref{eq:Inc_bound}; and  \eqref{eq:Inc_bound} immediately  implies \eqref{eq:continuityconstantsccMnc} as discussed above.
\end{proof}

\subsection{Sufficient conditions for best-approximation}\label{sec:abstractSufficientconditionsforbest-approximation}
The bilinear forms $A_h, a_\pw, b_h, c_h:(V_h+\M(\T))  \times  ( V_h+\M(\T))  \rightarrow {\mathbb R}$ in  the discrete problem \eqref{eq:discrete2}, 
(all bounded because $ V_h+\M(\T)$ is finite dimensional) 
read
\begin{align} \label{eq:bilinear}
A_h(\widehat{v},\widehat{w}):=
 a_\pw (\widehat{v},\widehat{w}) +  b_h(\widehat{v}, \widehat{w}) 
  + c_h(\widehat{v},    \widehat{w}) \quad\text{for all }\widehat{v},\widehat{w}\in  V_h+\M(\T).
 \end{align} 
The key assumption in abstract form is the {\it discrete consistency condition} with a  constant $ 0< \Lambda_\dc <\infty$: 
All functions  $v_\nc \in \M(\T)$, $ w_h \in V_h$, and all $v ,w\in V$ satisfy 
\begin{equation}\label{eq:dis_consistency}
 a_\pw(v_\nc, w_h - I_\nc w_h) + b_h(v_\nc, w_h- I_\nc w_h) \le
\Lambda_\dc   \trinl v_\nc-v \trinr_\pw \|w_h -w\|_h .
\end{equation}
(This is a straightforward generalization of \eqref{a1} from Subsection~\ref{sec:motivation}.)
Assume  that $b_h :(V_h + \M(\T))\times (V_h + \M(\T))\to\mathbb{R} $ is bounded in $V_h + \M(\T)$
by  a  constant $  0< M_{\rm b} <\infty$ and 
vanishes in $\M(\T)\times \M(\T)$ in the sense that
all $ v_h, w_h \in V_h $ and all $v_\nc,w_\nc \in \M(\T)$ satisfy 
\begin{align}
b_h(v_\nc,w_\nc)&=0, \label{eqnanihilationpropertyonbh} \\
b_h(v_h + v_\nc, w_h+ w_\nc)  &\le M_{\rm b} \|v_h+v_\nc\|_h \|w_h+ w_\nc\|_h. 
\label{eq:b_bound}
\end{align}
Suppose that the bilinear form $c_h:(V_h + \M(\T))\times (V_h + \M(\T))\to\mathbb{R}$ and  a constant  $0<\Lambda_{\rm c} <\infty$ satisfy 
\begin{equation} \label{eq:c_bound}
c_h(v_h, w_h) \le \Lambda_{\rm c} \| v - v_h\|_h  \|w- w_h\|_h \quad\text{for all }v_h, w_h \in V_h \text{ and } v,w \in V.
\end{equation}

\begin{thm}[best-approximation] \label{thm:abstract_main} 
Suppose \eqref{eq:a_ellipticity}-\eqref{eq:comparabilityofnorms} and  { \eqref{eq:Inc_bound}}- 
\eqref{eq:c_bound}. Let 
 $u \in V$ solve \eqref{eq:weakabstract} and let $u_h \in V_h$ solve \eqref{eq:discrete2}.  Then 
\[
(a) \; \|u-u_h\|_h \le C_{\qo} \trinl u - I_\nc u \trinr_\pw \text{ and }
(b) \;  \trinl  u- J I_\nc u_h \trinr \le   (1+\Lambda_\nc)(1+\Lambda_{\rm J} ) \| u - u_h \|_h
\]
hold with the constant 
$C_\qo:= 1+ \Lambda_h + \alpha^{-1}(
 \Lambda_0 M_\nc + \Lambda_h  (1+M_{\rm b})+ \Lambda_\dc +\Lambda_{\rm c} (1+\Lambda_h)) $.
\end{thm}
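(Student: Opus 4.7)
I would introduce $v_\M := I_\M u \in \M(\T)$ and the discrete error surrogate $e_h := I_h v_\M - u_h \in V_h$. The triangle inequality
\[
\|u-u_h\|_h \le \|u-v_\M\|_h + \|v_\M - I_h v_\M\|_h + \|e_h\|_h
\]
reduces part (a) to a bound on $\|e_h\|_h$: the first summand equals $\trinl u - I_\M u\trinr_\pw$ by \eqref{eq:comparabilityofnorms}.b since $u - v_\M \in V + \M(\T)$, and the second is at most $\Lambda_h\trinl u - I_\M u\trinr_\pw$ via \eqref{eq:mod_interpolation_estimate} with $v=u$. This already yields the leading factor $1+\Lambda_h$ in $C_{\rm qo}$.

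\textbf{Coercivity and the fundamental identity.} Coercivity \eqref{eq:a_ellipticity} gives $\alpha\|e_h\|_h^2 \le A_h(e_h,e_h) = A_h(I_h v_\M, e_h) - A_h(u_h, e_h)$, and \eqref{eq:discrete2} together with \eqref{eq:weakabstract} delivers the key identity $A_h(u_h, e_h) = F(JI_\M e_h) = a(u, JI_\M e_h)$, since $JI_\M e_h \in V$. Splitting $A_h(I_h v_\M, e_h) = A_h(I_h v_\M - v_\M, e_h) + A_h(v_\M, e_h)$, the three components of $A_h(I_h v_\M - v_\M, e_h)$ are bounded termwise with \eqref{eq:comparabilityofnorms}.a, \eqref{eq:mod_interpolation_estimate}, \eqref{eq:b_bound}, and \eqref{eq:c_bound} (applied with $v=u$, $w=0$), producing the constants $\Lambda_h$, $M_{\rm b}\Lambda_h$, and $\Lambda_{\rm c}(1+\Lambda_h)$ multiplying $\trinl u-I_\M u\trinr_\pw \|e_h\|_h$.

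\textbf{The crux: reduction of $A_h(v_\M, e_h) - a(u, JI_\M e_h)$.} The heart of the argument mirrors the miraculous algebra outlined in Subsection~\ref{sec:motivation}. I would split
\[
a_\pw(v_\M, e_h) = a_\pw(v_\M, e_h - I_\M e_h) + a_\pw(v_\M, I_\M e_h)
\]
and combine the right-inverse property \eqref{eq:right_inverse} with the orthogonality \eqref{eq:best_approx} applied to $JI_\M e_h \in V$ (whose Morley interpolant is $I_\M e_h$) to get $a_\pw(v_\M, I_\M e_h) = a_\pw(v_\M, JI_\M e_h)$. Invoking \eqref{eq:best_approx} a second time with $v=u$ and test $I_\M e_h \in P_2(\T)$ yields $a_\pw(v_\M - u, I_\M e_h) = 0$, so
\[
a_\pw(v_\M - u, JI_\M e_h) = a_\pw(v_\M - u, JI_\M e_h - I_\M e_h) \le \Lambda_0 M_\nc \trinl u-I_\M u\trinr_\pw \|e_h\|_h
\]
by the Cauchy inequality, \eqref{eq:lambda0}, and \eqref{eq:continuityconstantsccMnc}. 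For the $b_h$-contribution, the annihilation \eqref{eqnanihilationpropertyonbh} gives $b_h(v_\M, I_\M e_h) = 0$, so $a_\pw(v_\M, e_h - I_\M e_h) + b_h(v_\M, e_h - I_\M e_h) \le \Lambda_\dc \trinl u-I_\M u\trinr_\pw \|e_h\|_h$ via the discrete consistency condition \eqref{eq:dis_consistency} (with $v=u$, $w=0$). Collecting all contributions and dividing by $\alpha\|e_h\|_h$ produces precisely the announced $\alpha^{-1}(\Lambda_0 M_\nc + \Lambda_h(1+M_{\rm b}) + \Lambda_\dc + \Lambda_{\rm c}(1+\Lambda_h))$ piece of $C_{\rm qo}$.

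\textbf{Part (b) and the main obstacle.} Two triangle inequalities together with Lemma~\ref{companion}.d applied with comparison $v = u \in V$ yield
\[
\trinl u - JI_\M u_h\trinr \le (1+\Lambda_\jc)\trinl u - I_\M u_h\trinr_\pw \le (1+\Lambda_\jc)(1+\Lambda_\M)\|u-u_h\|_h,
\]
where \eqref{eq:comparabilityofnorms}.a and \eqref{eq:Inc_bound} bound $\trinl u - I_\M u_h\trinr_\pw \le \trinl u - u_h\trinr_\pw + \trinl u_h - I_\M u_h\trinr_\pw \le (1+\Lambda_\M)\|u-u_h\|_h$. The principal obstacle is in the third paragraph: producing exactly the sharp constant $\Lambda_0 M_\nc$ (and not the weaker $(1+\Lambda_0)M_\nc$ that a naive triangle estimate would give) hinges on invoking the orthogonality \eqref{eq:best_approx} a second time to kill $a_\pw(v_\M - u, I_\M e_h)$; verifying \eqref{eq:dis_consistency} in the concrete applications will then be the core content of Sections~\ref{sec:DGFEM} and \ref{sec:C0IP}.
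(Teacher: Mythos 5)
Your proposal is correct and follows essentially the same route as the paper's own proof: same definition of $e_h=I_h I_\M u - u_h$, same coercivity, same key identity $a(u,JI_\M e_h)=F(JI_\M e_h)=A_h(u_h,e_h)$, both applications of the orthogonality \eqref{eq:best_approx}, the annihilation \eqref{eqnanihilationpropertyonbh}, and the discrete consistency condition \eqref{eq:dis_consistency}; you simply perform the algebra inline where the paper isolates it as Lemma~\ref{lem:key_identity} plus three estimate lemmas, and the resulting constant $C_{\qo}$ matches exactly. One small bookkeeping slip: the third "component" of $A_h(I_h v_\M-v_\M,e_h)$, namely $c_h(I_h v_\M - v_\M, e_h)$, cannot be bounded by \eqref{eq:c_bound} in isolation because $I_h v_\M - v_\M\notin V_h$; what actually produces the announced factor $\Lambda_{\rm c}(1+\Lambda_h)$ is the undivided term $c_h(I_h v_\M, e_h)=c_h(I_h v_\M - v_\M,e_h)+c_h(v_\M,e_h)$, to which \eqref{eq:c_bound} does apply since $I_h v_\M\in V_h$ — this is exactly why the paper's Lemma~\ref{lem:key_identity} keeps $c_h(I_h I_\nc u,e_h)$ as a single unsplit term, and the easy fix in your writeup is to recombine the $c_h$-contributions before invoking \eqref{eq:c_bound}.
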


{
\begin{rem} The Morley FEM is included in the (non-symmetric) abstract framework of Theorem \ref{thm:abstract_main}  and leads 
to a sub-optimal best-approximation constant  $C_{\rm qo} = 1+ \Lambda_0$.
\end{rem}}

The error analysis of a post-processing dates back at least to \cite{BS05}  with a design of an enrichment operator for C$^0$IP functions
replaced here by the smoother $J I_\M$. For $F\in H^{-s}(\Omega)$ with $2-\sigma \le s \le 2$ (and   $u\in H^{4-s}(\Omega)$ from elliptic  regularity),
Theorem~\ref{thm:abstract_main} verifies
 \begin{align*}
 \trinl  u- J I_\nc u_h \trinr \le 
 C_{\rm qo}  (1+\Lambda_\nc)(1+\Lambda_{\rm J} )\trinl u - I_\nc u\trinr_\pw\lesssim h_{\max }^{2-s} \| F \|_{H^{-s}(\Omega)}. 
\end{align*}

\subsection{Proofs}
\label{subsectionProofoftheoremthm:abstract_main}
Abbreviate  $e_h:= I_h I_\nc u - u_h \in V_h$ and $(a_\pw+b_h)(\bullet,\bullet):= a_\pw(\bullet,\bullet)+b_h(\bullet,\bullet)$ for the sum of the  bilinear forms.

\begin{lem}[Key identity] \label{lem:key_identity}  It holds 
\begin{align*}
A_h(e_h,e_h) & = a_\pw(u, (1-J)I_\nc e_h ) + (a_\pw+b_h)  ((I_h-1) I_\nc u , e_h)
\\ & \quad +   (a_\pw+b_h)(I_\nc u, e_h-I_\nc e_h) +  c_h(I_h I_\nc u, e_h).
\end{align*}
\end{lem}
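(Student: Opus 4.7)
The plan is to rewrite $A_h(e_h, e_h) = A_h(I_h I_\nc u, e_h) - A_h(u_h, e_h)$, replace $A_h(u_h, e_h)$ using the discrete equation \eqref{eq:discrete2}, and then manipulate the remaining term $A_h(I_h I_\nc u, e_h)$ by two successive decompositions: one that splits off $(I_h-1)I_\nc u$, the other that splits off $e_h - I_\nc e_h$. The identity of the exact problem \eqref{eq:weakabstract} together with the orthogonality \eqref{eq:best_approx} of $I_\nc$ and the annihilation property \eqref{eqnanihilationpropertyonbh} of $b_h$ should then produce exactly the four terms on the right-hand side.

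I would proceed as follows. First, from $e_h = I_h I_\nc u - u_h$ and \eqref{eq:discrete2}, together with $JI_\M e_h \in V$ and \eqref{eq:weakabstract},
\[
A_h(u_h, e_h) = F(J I_\nc e_h) = a(u, J I_\nc e_h) = a_\pw(u, J I_\nc e_h).
\]
Next, expand $A_h(I_h I_\nc u, e_h)$ by \eqref{eq:bilinear} and split off the transfer error through
\[
A_h(I_h I_\nc u, e_h) = (a_\pw + b_h)(I_\nc u, e_h) + (a_\pw + b_h)\bigl((I_h-1) I_\nc u, e_h\bigr) + c_h(I_h I_\nc u, e_h).
\]
In the first term I would write $e_h = I_\nc e_h + (e_h - I_\nc e_h)$; the annihilation \eqref{eqnanihilationpropertyonbh} gives $b_h(I_\nc u, I_\nc e_h) = 0$ since both arguments lie in $\M(\T)$, so only $a_\pw(I_\nc u, I_\nc e_h)$ survives on the $I_\nc e_h$ half.

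The key algebraic step is to use the integral-mean property \eqref{eq:best_approx} of $I_\nc = I_\M$. With $I_\nc e_h \in \M(\T) \subset P_2(\T)$,
\[
a_\pw(I_\nc u, I_\nc e_h) = a_\pw(u, I_\nc e_h) = a_\pw(u, J I_\nc e_h) + a_\pw\bigl(u, (1-J) I_\nc e_h\bigr).
\]
The term $a_\pw(u, J I_\nc e_h)$ cancels exactly against $-A_h(u_h, e_h) = -a_\pw(u, J I_\nc e_h)$ above, leaving $a_\pw(u, (1-J) I_\nc e_h)$. Collecting the remaining pieces yields
\[
A_h(e_h, e_h) = a_\pw(u, (1-J) I_\nc e_h) + (a_\pw + b_h)(I_\nc u, e_h - I_\nc e_h) + (a_\pw + b_h)\bigl((I_h-1) I_\nc u, e_h\bigr) + c_h(I_h I_\nc u, e_h),
\]
which is the asserted identity.

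I do not expect a serious obstacle here: the proof is a careful bookkeeping of cancellations, and the only non-trivial inputs are the three structural facts \eqref{eq:weakabstract}, \eqref{eq:best_approx}, and \eqref{eqnanihilationpropertyonbh}. The subtlest point is keeping track of the distinction between $a$ (on $V \times V$) and $a_\pw$ (on the piecewise space), and noticing that $JI_\nc e_h \in V$ lets them be identified precisely where needed.
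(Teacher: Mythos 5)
Your proof is correct and takes essentially the same route as the paper: both start from the key identity $a(u,JI_\nc e_h)=F(JI_\nc e_h)=A_h(u_h,e_h)$, use $e_h=I_hI_\nc u-u_h$ to reduce to $A_h(I_hI_\nc u,e_h)-a(u,JI_\nc e_h)$, and then invoke \eqref{eq:best_approx} and \eqref{eqnanihilationpropertyonbh} to organize the cancellations. Your bookkeeping splits off $(I_h-1)I_\nc u$ before handling the $I_\nc e_h$ part, whereas the paper does it in the reverse order, but the ingredients and structure are the same.
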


\begin{proof} 
The test function $v:=J I_\nc e_h\in V$ in  \eqref{eq:weakabstract} and the test 
function $v_h:= e_h\in V_h$ in \eqref{eq:discrete2} lead to
\[
a(u, J I_\nc e_h) =F(J I_\nc e_h)  = A_h(u_h,e_h).
\]
This and the definition $e_h= I_h I_\nc u - u_h$ result in 
\[
A_h(e_h,e_h) = A_h(I_h I_\nc u, e_h)- a(u, J I_\nc e_h). 
\]
The identity  $a_\pw(u, I_\nc e_h)= a_\pw(I_\nc u, I_\nc e_h )$ from \eqref{eq:best_approx} shows that this is equal to 
\[
a_\pw(u, (1-J)I_\nc e_h ) + A_h(I_h I_\nc u, e_h) -a_\pw(I_\nc u, I_\nc e_h ).
\]
The last term $a_\pw(I_\nc u, I_\nc e_h )$ is part of $A_h(I_\nc u, I_\nc e_h )$ by  \eqref{eq:bilinear}, while 
 $b_h(I_\nc u, I_\nc e_h )=0$ owing to  \eqref{eqnanihilationpropertyonbh}. 
This and  elementary algebra conclude the proof. 
\end{proof}

\begin{lem} \label{lem:term1} 
The assumptions  \eqref{eq:best_approx}, \eqref{eq:right_inverse}, \eqref{eq:lambda0},
 and 
\eqref{eq:continuityconstantsccMnc} imply
$$a_\pw(u, (1-J)I_\nc e_h ) \le  \Lambda_0 M_\nc  \trinl u - I_\nc u \trinr_\pw  \| e_h\|_h.$$ 
\end{lem}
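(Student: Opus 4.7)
The plan is to exploit a key orthogonality that makes $I_\M u$ disappear from the argument, reducing the bound to a clean Cauchy--Schwarz estimate followed by the two given bounds \eqref{eq:lambda0} and \eqref{eq:continuityconstantsccMnc}.

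First, I would split the left-hand side via the linearity of $a_\pw$:
\begin{equation*}
a_\pw(u, (1-J)I_\nc e_h) = a_\pw(u - I_\M u,\, (1-J)I_\nc e_h) + a_\pw(I_\M u,\, (1-J)I_\nc e_h).
\end{equation*}
The heart of the argument is to show the second term vanishes. Apply \eqref{eq:best_approx} (with the roles swapped by symmetry of $a_\pw$): for any $v\in V$, the function $v - I_\M v$ is $a_\pw$-orthogonal to every $w_2\in P_2(\T)$. Since $I_\M u \in P_2(\T)$, this yields $a_\pw(I_\M u, v) = a_\pw(I_\M u, I_\M v)$ for every $v\in V$. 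Applying this to $v:= J I_\M e_h \in V$ and invoking the right-inverse property \eqref{eq:right_inverse} $I_\M J = \operatorname{id}$ on $\M(\T)$, I get $a_\pw(I_\M u, J I_\M e_h) = a_\pw(I_\M u, I_\M J I_\M e_h) = a_\pw(I_\M u, I_\M e_h)$. Therefore
\begin{equation*}
a_\pw(I_\M u,\, (1-J)I_\M e_h) = a_\pw(I_\M u, I_\M e_h) - a_\pw(I_\M u, I_\M e_h) = 0.
\end{equation*}

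Having established $a_\pw(u,(1-J)I_\M e_h) = a_\pw(u - I_\M u,\,(1-J)I_\M e_h)$, the rest is a one-line estimation. A Cauchy--Schwarz inequality in $a_\pw$ followed by \eqref{eq:lambda0} applied to $v_\M := I_\M e_h \in \M(\T)$ and then \eqref{eq:continuityconstantsccMnc} produces
\begin{equation*}
a_\pw(u - I_\M u,\,(1-J)I_\M e_h) \le \trinl u - I_\M u \trinr_\pw \, \trinl (1-J) I_\M e_h \trinr_\pw \le \Lambda_0 M_\nc \trinl u - I_\M u \trinr_\pw \|e_h\|_h,
\end{equation*}
which is the asserted bound. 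There is no serious obstacle; the only subtle point is recognising that the orthogonality \eqref{eq:best_approx} can be iterated with the right-inverse identity to kill the $I_\M u$-piece, which is precisely what allows the "too-large" factor $\trinl u\trinr$ to be replaced by the best-approximation quantity $\trinl u - I_\M u\trinr_\pw$.
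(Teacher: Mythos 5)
Your proof is correct and follows essentially the same route as the paper: it establishes $a_\pw(I_\M u,(1-J)I_\M e_h)=0$ by combining \eqref{eq:best_approx} (applied to $JI_\M e_h\in V$) with the right-inverse identity $I_\M J=\mathrm{id}$ from \eqref{eq:right_inverse}, and then closes with Cauchy--Schwarz, \eqref{eq:lambda0}, and \eqref{eq:continuityconstantsccMnc}. The paper phrases the orthogonality step slightly more tersely, but the underlying argument is identical.
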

\begin{proof}
Set $v_\nc:= I_\nc e_h$ and recall $I_\nc (v_\nc - J v_\nc)=0$ from \eqref{eq:right_inverse}. 
The orthogonality  \eqref{eq:best_approx}   and the Cauchy inequality with respect to  $a_\pw(\bullet,\bullet)$ imply 
\[
 a_\pw(u, (1-J) v_\nc ) = a_\pw(u - I_\nc u, (1-J) v_\nc  ) \le  \trinl u - I_\nc u \trinr_\pw \trinl (1-J) v_\nc \trinr_\pw. 
\]
The boundedness of $1-J$ in \eqref{eq:lambda0} and  the boundedness of $I_\nc$ in \eqref{eq:continuityconstantsccMnc} show
\[
\trinl (1-J) v_\nc \trinr_\pw   \le \Lambda_0  \trinl v_\nc \trinr_\pw \le \Lambda_0 M_\nc \| e_h\|_h. 
\]
The combination of the two displayed inequalities  concludes the proof. 
\end{proof}

\begin{lem}\label{lem:term2} 
The assumptions \eqref{eq:comparabilityofnorms}, \eqref{eq:mod_interpolation_estimate},  and   \eqref{eq:b_bound}  imply 
$$ 
 (a_\pw+b_h)  ((I_h-1) I_\nc u , e_h)
\le  \Lambda_h  (1+M_{\rm b}) \trinl u - I_\nc u\trinr_\pw \|e_h\|_h. 
$$  
\end{lem}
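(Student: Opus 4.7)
The plan is to isolate the quantity $w := (I_h - 1)\, I_\nc u = I_h I_\nc u - I_\nc u$, which lies in $V_h + \M(\T)$, and estimate its two bilinear contributions separately before invoking \eqref{eq:mod_interpolation_estimate} at the end.

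First I would split the left-hand side as $a_\pw(w, e_h) + b_h(w, e_h)$. For the symmetric part $a_\pw(w, e_h)$, a Cauchy--Schwarz inequality for the semi-scalar product $a_\pw$ gives $a_\pw(w, e_h) \le \trinl w \trinr_\pw \trinl e_h \trinr_\pw$, and then \eqref{eq:comparabilityofnorms}(a) upgrades both factors to $\|w\|_h \|e_h\|_h$. For $b_h(w, e_h)$, since $w \in V_h + \M(\T)$ and $e_h \in V_h \subset V_h + \M(\T)$, the boundedness assumption \eqref{eq:b_bound} directly yields $b_h(w, e_h) \le M_{\rm b} \|w\|_h \|e_h\|_h$. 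Adding the two estimates gives a prefactor $(1 + M_{\rm b})$ and reduces the task to controlling $\|w\|_h$.

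The final step is to apply \eqref{eq:mod_interpolation_estimate} to $v_\nc := I_\nc u \in \M(\T)$ with the admissible choice $v := u \in V$. This yields
\[
\|w\|_h = \|I_\nc u - I_h I_\nc u\|_h \le \Lambda_h \trinl I_\nc u - u \trinr_\pw = \Lambda_h \trinl u - I_\nc u \trinr_\pw,
\]
and combining with the previous display proves the asserted bound $\Lambda_h (1 + M_{\rm b}) \trinl u - I_\nc u \trinr_\pw \|e_h\|_h$.

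There is no real obstacle here: the lemma is a bookkeeping exercise that chains together the triangle-style Cauchy--Schwarz on $a_\pw$, the boundedness \eqref{eq:b_bound} of $b_h$, the norm dominance \eqref{eq:comparabilityofnorms}(a), and the transfer estimate \eqref{eq:mod_interpolation_estimate}; the only tiny point to check is that $w = I_h I_\nc u - I_\nc u$ indeed belongs to $V_h + \M(\T)$ so that \eqref{eq:b_bound} is applicable, which is immediate from the mapping properties $I_h : \M(\T) \to V_h$ and $I_\nc u \in \M(\T)$.
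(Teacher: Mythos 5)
Your proof is correct and matches the paper's argument exactly: Cauchy--Schwarz on $a_\pw$ combined with \eqref{eq:comparabilityofnorms}(a), the $b_h$-boundedness from \eqref{eq:b_bound}, and then \eqref{eq:mod_interpolation_estimate} with $v_\nc = I_\nc u$ and $v = u$. The observation that $(I_h-1)I_\nc u \in V_h + \M(\T)$ is the right sanity check, though it is left implicit in the paper.
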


\begin{proof}
For  $e_h \in V_h$ and  $I_\nc u \in \M(\T)$, the Cauchy inequality  plus (\ref{eq:comparabilityofnorms}.a) show
\[ a_\pw((I_h-1) I_\nc u , e_h)\le  \|(1-I_h) I_\nc u \|_h \|e_h\|_h .\]
This  and the   boundedness of $b_h$ in   \eqref{eq:b_bound} result in 
\begin{align*}
 (a_\pw+b_h)  ((I_h-1) I_\nc u , e_h) \le     (1+M_{\rm b}) \|(1-I_h) I_\nc u \|_h \|e_h\|_h.
\end{align*}
The inequality \eqref{eq:mod_interpolation_estimate} with $v_\nc=I_\nc u$ and $v = u$ reads 
$$\|(1-I_h) I_\nc u \|_h \le \Lambda_h \trinl u - I_\nc u\trinr_\pw.$$ 
The combination of the last two displayed inequalities concludes the proof. 
\end{proof}

\begin{lem}\label{lem:term3} 
The assumptions \eqref{eq:comparabilityofnorms}, \eqref{eq:mod_interpolation_estimate}, \eqref{eq:dis_consistency}, and \eqref{eq:c_bound} imply
$$ 
  (a_\pw+b_h)(I_\nc u, e_h-I_\nc e_h) +  c_h(I_h I_\nc u, e_h)
 \le \left(\Lambda_\dc +\Lambda_{\rm c} (1+\Lambda_h)    \right) \trinl u - I_\nc u\trinr_\pw \| e_h\|_h.
$$
\end{lem}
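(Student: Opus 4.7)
The plan is to estimate the two summands separately using exactly the three hypotheses listed in the statement, then add them.

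For the first summand, I would apply the discrete consistency condition \eqref{eq:dis_consistency} directly with the choice $v_\nc := I_\nc u\in\M(\T)$, $w_h:= e_h\in V_h$, $v:=u\in V$ and $w:=0\in V$. Since $\trinl I_\nc u - u\trinr_\pw = \trinl u- I_\nc u\trinr_\pw$ and $\|e_h-0\|_h=\|e_h\|_h$, this yields
\[
 a_\pw(I_\nc u, e_h - I_\nc e_h) + b_h(I_\nc u, e_h - I_\nc e_h)\le \Lambda_\dc \,\trinl u - I_\nc u\trinr_\pw \,\|e_h\|_h.
\]

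For the second summand, the continuity \eqref{eq:c_bound} of $c_h$ with $v_h:= I_h I_\nc u\in V_h$, $w_h:= e_h\in V_h$, and the trivial choices $v:=u\in V$, $w:=0\in V$ gives
\[
 c_h(I_h I_\nc u, e_h) \le \Lambda_{\rm c}\,\|u- I_h I_\nc u\|_h\,\|e_h\|_h.
\]
The triangle inequality $\|u-I_h I_\nc u\|_h\le \|u-I_\nc u\|_h+\|I_\nc u - I_h I_\nc u\|_h$ reduces matters to two bounds. By (\ref{eq:comparabilityofnorms}.b), $\|u-I_\nc u\|_h = \trinl u-I_\nc u\trinr_\pw$ because $u-I_\nc u\in V+\M(\T)$; and \eqref{eq:mod_interpolation_estimate} applied with $v_\nc:= I_\nc u$ and $v:= u$ gives $\|I_\nc u - I_h I_\nc u\|_h\le \Lambda_h\trinl u-I_\nc u\trinr_\pw$. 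Consequently $\|u- I_h I_\nc u\|_h\le (1+\Lambda_h)\trinl u-I_\nc u\trinr_\pw$, hence
\[
c_h(I_hI_\nc u,e_h)\le \Lambda_{\rm c}(1+\Lambda_h)\,\trinl u-I_\nc u\trinr_\pw\,\|e_h\|_h.
\]

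Adding the two estimates yields the asserted inequality with the constant $\Lambda_\dc+\Lambda_{\rm c}(1+\Lambda_h)$. No step is genuinely hard here: the argument is essentially a routine bookkeeping exercise once one recognizes that (\ref{eq:dis_consistency}) and (\ref{eq:c_bound}) each admit the harmless test choice $w=0\in V$, which is the only small point to notice; the only other ingredient is controlling $\|u- I_hI_\nc u\|_h$ by $\trinl u-I_\nc u\trinr_\pw$, for which (\ref{eq:comparabilityofnorms}.b) and (\ref{eq:mod_interpolation_estimate}) combine cleanly via a triangle inequality.
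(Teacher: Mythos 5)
Your proof is correct and takes essentially the same route as the paper: the same choices $v_\nc=I_\nc u$, $w_h=e_h$, $v=u$, $w=0$ in \eqref{eq:dis_consistency}, the same choices in \eqref{eq:c_bound}, and the same triangle-inequality split of $\|u-I_hI_\nc u\|_h$ handled by (\ref{eq:comparabilityofnorms}.b) and \eqref{eq:mod_interpolation_estimate}.
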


\begin{proof}
The   discrete consistency condition \eqref{eq:dis_consistency} for $v_\nc:=I_\nc u$, $w_h:= e_h$, $v:=u$, and $w:=0$ 
lead to the upper bound 
\[
 (a_\pw+b_h)(I_\nc u, e_h-I_\nc e_h)\le \Lambda_\dc \trinl u - I_\nc u\trinr_\pw \|e_h\|_h
\]
for  the first term on the left-hand side of the asserted estimate.
The remaining contribution $c_h(I_h I_\nc u, e_h)$  is controlled in \eqref{eq:c_bound} with $v_h:=I_hI_\nc u$,  $w_h:= e_h $, $v=u$, and $w=0$ by 
\[
c_h(I_h I_\nc u, e_h)\le\Lambda_{\rm c} \| u - I_h I_\nc u\|_h \|e_h\|_h .
\]
 A triangle inequality in $\|\bullet\|_h$, \eqref{eq:mod_interpolation_estimate} with 
$v_\nc:= I_\nc u$ and $v=u$, and   (\ref{eq:comparabilityofnorms}.b) 
show 
\[
 \| u - I_h I_\nc u\|_h\le (1+\Lambda_h)  \trinl u - I_\nc u\trinr_\pw.
 \]
The combination of the resulting inequalities concludes the proof.
\end{proof}

\begin{proof}[Proof of best-approximation in Theorem \ref{thm:abstract_main}.a]
The discrete ellipticity \eqref{eq:a_ellipticity}  is followed by Lemma~\ref{lem:key_identity} with terms controlled in 
 Lemmas \ref{lem:term1}-\ref{lem:term3}. This leads (after a division by  $ \|e_h\|_h$, if positive) to 
\[
\alpha \|e_h\|_h \le
\left(  \Lambda_0 M_\nc + \Lambda_h  (1+M_{\rm b})+ \Lambda_\dc +\Lambda_{\rm c} (1+\Lambda_h)    \right)
 \trinl u - I_\nc u \trinr_\pw.
\]
 On the other hand,  $ \|(I_h -1) I_\nc u \|_h \le \Lambda_h \trinl u - I_\nc u\trinr_\pw$ from 
 \eqref{eq:mod_interpolation_estimate} 
 for $v_\nc:= I_\nc u$ and  $v:=u$.  Triangle inequalities in $\|\bullet\|_h$,  (\ref{eq:comparabilityofnorms}.b),  and the last two inequalities result in 
\begin{align*}
\|u-u_h\|_h & \le \trinl u-I_\nc u \trinr_\pw + \|I_\nc u - I_h I_\nc u \|_h + \|e_h \|_h \le  C_{\qo} \trinl u - I_\nc u \trinr_\pw
\end{align*}
with the constant $C_{\qo}$ displayed in the assertion. 
 \end{proof}

\noindent{\it Proof for post-processing in Theorem~\ref{thm:abstract_main}.b.}  The assertion (b) is formulated in terms of  $u$ and $u_h$ but holds for general 
 $v_h\in V_h$ and $v\in V$ and the abbreviation  $v_\M:= I_\M v_h\in M(\T)$. 
A triangle inequality and Lemma~\ref{companion}.d prove
\[
 \trinl v - J v_\nc  \trinr 
 \le \trinl v-v_\nc \trinr_\pw + \trinl (1- J)v_\nc \trinr_\pw
 \le(1+ \Lambda_{\rm J})\trinl v-v_\nc \trinr_\pw .
\]
A triangle inequality, (\ref{eq:comparabilityofnorms}.a) twice, 
and \eqref{eq:Inc_bound} show
\[
\trinl v-v_\nc \trinr_\pw \le \trinl v-v_h \trinr_\pw+ \| v_h-v_\nc  \|_h
\le(1+\Lambda_\M)\|  v-v_h \|_h .
\]
The combination of the two displayed estimates reads 
$$
\trinl v - J v_\nc  \trinr \le (1+ \Lambda_{\rm J})(1+\Lambda_\M)\|  v-v_h \|_h .  \qquad \qquad \qquad 
\qed
$$


\section{Weaker and piecewise Sobolev norm error estimates}\label{sec:lower_order}

\subsection{Assumptions and result}
This subsection presents one further condition sufficient for a lower-order a priori error estimate for the discrete problem \eqref{eq:discrete2} 
beyond the hypotheses of Subsections~\ref{sec:abstractSeconddiscretisation}-\ref{sec:abstractSufficientconditionsforbest-approximation}:
The {\em dual discrete consistency}  with a constant $ 0\le \Lambda_{\rm ddc} <\infty$ asserts that 
 any  $v_h \in V_h$, $w_\nc \in \M(\T)$, and any $v,w \in V$ satisfy 
\begin{equation}
 a_\pw ( v_h -I_\nc v_h, w_\nc)+b_h( v_h, w_\nc)      \le  \Lambda_{\rm ddc}\| v-v_h \|_h   \trinl w- w_\nc \trinr_\pw .
\label{eq:dual_dis_consistency} 
\end{equation}


{ \begin{rem}[symmetry]\label{exampleoneq:dual_dis_consistency} 
If the bilinear form $ (a_\pw + b_h)(\bullet,\bullet) $   is symmetric, then 
\eqref{eq:dis_consistency}-\eqref{eqnanihilationpropertyonbh} imply 
\eqref{eq:dual_dis_consistency}.  ( Rewrite the left-hand side in \eqref{eq:dual_dis_consistency} with \eqref{eqnanihilationpropertyonbh} and symmetry 
into the left-hand side of \eqref{eq:dis_consistency} with $v_h$ and $w_\M$ replacing $w_h$ and $v_\M$ to establish \eqref{eq:dual_dis_consistency}.)
\end{rem}}


\noindent Since $u_h\in V_h$ may not belong to $H^{s}(\Omega)$ for $2-\sigma \le s \le 2$  in general, the post-processing $J I_\nc u_h \in V$ arises in 
the duality argument with $0 < \sigma \le 1$   from Example~\ref{rem:dual}.

\begin{thm}[lower-order error estimates]\label{thm:aux1}
Suppose  the assumptions of Theorem~\ref{thm:abstract_main}, 
 \eqref{eq:dual_dis_consistency}, and   $2-\sigma \le s \le 2$. Then there exist constants 
  $\constant{}\label{2}(s), \constant{}\label{3}(s)>0$ such that (a)-(b) hold for any 
$F\in H^{-s}(\Omega) $ with solution $u \in V$ to \eqref{eq:weakabstract} and the solution $u_h \in V_h$ to \eqref{eq:discrete2}. (a)  $\|u - J I_\nc u_h \|_{H^{s}(\Omega)}  \le\constant[\ref{2}](s)  h_{\max}^{2-s} \|u-u_h\|_h $ and
(b),  if   $u_h \in  P_2(\T)$, then $   \|u -  u_h \|_{{H^s}(\T)} \le  \constant[\ref{3}](s) h_{\max}^{2-s} \|u-u_h\|_h$.
\end{thm}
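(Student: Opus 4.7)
For part (a), my plan is to perform an Aubin--Nitsche duality. Set $e := u - J I_\nc u_h \in V$. A corollary of the Hahn--Banach extension theorem (as in the proof of Theorem~\ref{weakapriori}) furnishes a functional $G \in H^{-s}(\Omega)$ with $\|G\|_{H^{-s}(\Omega)} \le 1$ and $\|e\|_{H^s(\Omega)} = G(e)$. Let $y \in V$ solve the dual problem $a(v, y) = G(v)$ for all $v \in V$; elliptic regularity (Example~\ref{rem:dual}) yields $y \in H^{4-s}(\Omega)$ with $\|y\|_{H^{4-s}(\Omega)} \le C_{\rm reg}(s)$ because $2-\sigma \le s \le 2$. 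Hence $\|e\|_{H^s(\Omega)} = a(e, y)$, and the entire task reduces to bounding $a(u - J I_\nc u_h,\, y)$ by $\|u - u_h\|_h$ times an approximation error of $y$ that decays like $h_{\max}^{2-s}$.

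The main work is to derive a key identity for $a(u - J I_\nc u_h, y)$ that mirrors Lemma~\ref{lem:key_identity} with the dual variable $y$ taking the role of the discrete test function $e_h$. Choose $y_h := I_h I_\nc y \in V_h$, combine the continuous equation $a(u, J I_\nc y) = F(J I_\nc y)$ from \eqref{eq:weakabstract} with the discrete equation $A_h(u_h, y_h) = F(J I_\nc y_h)$ from \eqref{eq:discrete2}, and insert $\pm I_\nc u$, $\pm I_\nc y$ together with the orthogonality \eqref{eq:best_approx} and the right-inverse property \eqref{eq:right_inverse}. This produces an identity
\begin{align*}
a(u - J I_\nc u_h, y) & = a_\pw\bigl(u - I_\nc u - (J I_\nc u_h - I_\nc u_h),\; y - I_\nc y\bigr) \\
& \quad + \bigl[a_\pw(u_h - I_\nc u_h,\, I_\nc y) + b_h(u_h,\, I_\nc y)\bigr] \\
& \quad + (a_\pw+b_h)(I_\nc u_h,\, I_\nc y - I_h I_\nc y) - c_h(u_h, y_h)
\end{align*}
(or an equivalent rearrangement). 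The symmetric first line is controlled by Cauchy--Schwarz, Lemma~\ref{companion}(d), and \eqref{eq:Inc_bound}. The nonconforming bracket is bounded via the dual discrete consistency \eqref{eq:dual_dis_consistency} with $v_h \leftarrow u_h$, $w_\nc \leftarrow I_\nc y$, $v \leftarrow u$, $w \leftarrow y$. The transfer residual involving $I_h - 1$ is handled by \eqref{eq:mod_interpolation_estimate} together with \eqref{eq:b_bound}, and the stabilization $c_h$ by \eqref{eq:c_bound}. Each resulting bound contains a factor $\|u - u_h\|_h$ multiplying one of $\trinl y - I_\nc y\trinr_\pw$, $\trinl y - J I_\nc y\trinr_\pw$, or $\|y - I_h I_\nc y\|_h$, and all three of these are estimated by $\lesssim h_{\max}^{2-s} \|y\|_{H^{4-s}(\Omega)} \le h_{\max}^{2-s} C_{\rm reg}(s)$ via Lemma~\ref{interpolationerrorestimatesI}(c) applied to $y \in H^{4-s}(\Omega)$, together with \eqref{eq:mod_interpolation_estimate} and Theorem~\ref{lemmaccapproximationproperties}. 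This establishes (a).

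Part (b) is immediate from part (a) and Theorem~\ref{lemmaccapproximationproperties}(d). Since $u - J I_\nc u_h \in V$, its $H^s(\T)$ seminorm equals the global $H^s(\Omega)$ norm, and the triangle inequality gives
\[
\|u - u_h\|_{H^s(\T)} \le \|u - J I_\nc u_h\|_{H^s(\Omega)} + \|u_h - J I_\nc u_h\|_{H^s(\T)}.
\]
The first summand is bounded by part (a). For the second, the hypothesis $u_h \in P_2(\T)$ allows Theorem~\ref{lemmaccapproximationproperties}(d) (with $v_2 = u_h$) to yield $\|u_h - J I_\nc u_h\|_{H^s(\T)} \lesssim h_{\max}^{2-s} \min_{v \in V}\|u_h - v\|_h \le h_{\max}^{2-s} \|u - u_h\|_h$. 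The main obstacle is the algebraic bookkeeping in the derivation of the key identity in part (a): the operators $I_\nc$, $I_h$, and $J$ must be inserted and rearranged in just such a way that every residual admits a clean pairing to invoke the dual discrete consistency \eqref{eq:dual_dis_consistency} or one of the other standing assumptions, so that only jumps and interpolation errors of $y$ remain on the right-hand side.
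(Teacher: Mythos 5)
Your overall plan—Aubin--Nitsche duality with dual solution $y=z\in H^{4-s}(\Omega)$, a key algebraic identity for $a(u-JI_\M u_h,z)$, then control of each piece by the standing assumptions and Lemma~\ref{interpolationerrorestimatesI}.c—is exactly the paper's route (Lemmas~\ref{ref:key_identity}--\ref{lem:term5}), and your part (b) is essentially the paper's argument via Theorem~\ref{lemmaccapproximationproperties}.d (with the one slip that the broken $H^s(\T)$ norm of $u-JI_\M u_h\in V$ is \emph{bounded by}, not \emph{equal to}, the global $H^s(\Omega)$ norm; this does not affect the direction you need).

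The gap is in the proposed key identity and its estimation. First, a sign check in the simplest case $I_h=\mathrm{id}$ (dGFEM/WOPSIP) shows the stabilisation contribution must enter with $+c_h(u_h,y_h)$, not $-c_h(u_h,y_h)$. More importantly, your third line $(a_\pw+b_h)(I_\nc u_h, I_\nc y - I_h I_\nc y)$ is not of the form $(a_\pw+b_h)(v_\nc, w_h-I_\nc w_h)$ that the discrete consistency condition \eqref{eq:dis_consistency} accepts: with $w_h:=I_h I_\nc y$ one has $w_h-I_\nc w_h = I_h I_\nc y - I_\nc I_h I_\nc y$, and $I_\nc I_h$ is not the identity on $\M(\T)$ for the $C^0$IP (so $I_\nc I_h I_\nc y \neq I_\nc y$). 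Worse, the tools you name for this term---\eqref{eq:mod_interpolation_estimate} plus \eqref{eq:b_bound}---give a bound of the form $\lesssim\|I_\nc u_h\|_h\,\|I_\nc y-I_h I_\nc y\|_h$, whose first factor is $O(1)$ rather than $O(\|u-u_h\|_h)$; without the discrete consistency \eqref{eq:dis_consistency} (which lets one replace $I_\nc u_h$ by the difference $I_\nc u_h - JI_\nc u_h$ via the freedom to subtract any $v\in V$), the required factor $\|u-u_h\|_h$ does not appear. The paper's Lemma~\ref{ref:key_identity} is engineered precisely so that the analogous term reads $(a_\pw+b_h)(I_\nc u_h, z_h-I_\nc z_h)$ with $z_h=I_h I_\nc z$, and Lemma~\ref{lem:term5} then applies \eqref{eq:dis_consistency} (with $v=JI_\nc u_h$, $w=z$) to this term and \eqref{eq:dual_dis_consistency} (with $w_\nc=I_\nc z_h$, $w=\zeta=JI_\nc z_h$) to the companion term $a_\pw(u_h-I_\nc u_h,I_\nc z_h)+b_h(u_h,I_\nc z_h)$. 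So the ingredients you cite are the right ones, but you would need to land on that seven-term decomposition (or a genuine equivalent): as written your identity fails, and your estimation recipe for the $I_h$-residual cannot produce the needed factor of $\|u-u_h\|_h$.
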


\subsection{Duality and algebra}\label{sec:dual}
The duality of  $H^{-s}(\Omega)$ and  $H^{s}_0(\Omega)$ reveals for the exact solution $u\in V $ to  \eqref{eq:weakabstract}
and the post-processing $v:= J I_\nc u_h \in V$ of the discrete solution  $u_h \in V_h$ to \eqref{eq:discrete2} that
\[
\|u -v\|_{H^{s}(\Omega)} = \sup_{0 \ne G \in H^{-s}(\Omega)} \frac{G(u-v)}{\|G\|_{H^{-s}(\Omega)}}= G(u-v).
\]
The supremum is attained for some  $G\in H^{-s}(\Omega)\subset V^*$ with norm  $\|G\|_{H^{-s}(\Omega)} =1$ 
owing to a corollary of the Hahn-Banach theorem. The functional $a(z,\bullet)=G\in V^*$ has a 
 unique Riesz representation $z\in V$ in the Hilbert space $(V,a)$;  
 $z\in V$  is the weak solution to the PDE  $\Delta^2 z=G $. 
The   elliptic  regularity (as in Example \ref{rem:dual}) leads to $ z \in V \cap H^{4-s}(\Omega) $ with $2\le 4-s\le 2+\sigma$ and \eqref{eq:extrareg}; hence
\[
\|u -v\|_{H^{s}(\Omega)}=a(u-v,z)\quad\text{and}\quad
\|z \|_{ H^{4-s}(\Omega)}\le C_{\rm reg}.
\]
The proof of Theorem \ref{thm:aux1} consists of  a series of lemmas to establish an upper bound  of
$a( u-v , z)$ for the above $ z \in V \cap H^{4-s}(\Omega) $. The  notation 
\[
v:= J I_\nc u_h \in V , \qquad  z_h :=I_h I_\nc z \in V_h\quad\text{and}\quad\zeta:= J I_\nc z_h \in V
\]
for the discrete, exact, and dual solution $u_h,u,$ and $z$ applies throughout this section.

\begin{lem}[Key identity]\label{ref:key_identity}  It holds
\begin{align*}
&a(u-v,z) = a(u-v,z - \zeta) +a_\pw(u_h-v, \zeta-z_h) + a_{\pw} (I_\nc u_h-v, z_h -I_\nc z_h) \notag \\
& \quad   +  a_{\pw} ( u_h-I_\nc u_h , I_\nc z_h -\zeta)+ A_h(u_h,z_h) - a_\pw (u_h, I_\nc z_h) + a_\pw (u_h- I_\nc u_h,z_h).
\end{align*}
\end{lem}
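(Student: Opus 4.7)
The identity is purely algebraic: the goal is to rewrite $a(u-v,z)$ by inserting the smoothed dual quantities and then absorbing the source terms via the continuous and discrete equations. The plan is to proceed in four steps.

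First, split $a(u-v,z)=a(u-v,z-\zeta)+a(u-v,\zeta)$ by bilinearity; the first summand is the boundary term $T_1:=a(u-v,z-\zeta)$ that remains on the right-hand side. Since $v,\zeta\in V$, $a(v,\zeta)=a_\pw(v,\zeta)$, and the exact problem \eqref{eq:weakabstract} together with the discrete problem \eqref{eq:discrete2} yield the crucial link
\[
a(u,\zeta)=a(u,JI_\M z_h)=F(JI_\M z_h)=A_h(u_h,z_h).
\]
Hence $a(u-v,\zeta)=A_h(u_h,z_h)-a_\pw(v,\zeta)$, and the task reduces to the identity
\[
A_h(u_h,z_h)-a_\pw(v,\zeta)=T_2+T_3+T_4+T_5,
\]
with the abbreviations $T_2,T_3,T_4$ and $T_5:=A_h(u_h,z_h)-a_\pw(u_h,I_\M z_h)+a_\pw(u_h-I_\M u_h,z_h)$ from the statement.

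Second, expand $T_2,T_3,T_4$ by bilinearity of $a_\pw$ into their four-term forms. Cancelling the obvious pairs ($\pm a_\pw(u_h,\zeta)$ and $\pm a_\pw(v,z_h)$) and then combining with $T_5$ makes the terms involving $z_h$ (respectively $I_\M z_h$) collapse, because
$-a_\pw(u_h,z_h)+a_\pw(I_\M u_h,z_h)+a_\pw(u_h-I_\M u_h,z_h)=0$ and the two $a_\pw(u_h,I_\M z_h)$ contributions cancel. After this bookkeeping, the required identity reduces to
\[
a_\pw(v,I_\M z_h)+a_\pw(I_\M u_h,\zeta)=2\,a_\pw(I_\M u_h,I_\M z_h).
\]

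Third, establish this last identity from the two structural properties of the smoother. The right-inverse property \eqref{eq:right_inverse} gives $I_\M v=I_\M JI_\M u_h=I_\M u_h$ and $I_\M\zeta=I_\M z_h$; the orthogonality \eqref{eq:best_approx} then yields $a_\pw(v,w_2)=a_\pw(I_\M v,w_2)=a_\pw(I_\M u_h,w_2)$ for any $w_2\in P_2(\T)$, and symmetrically $a_\pw(w_2,\zeta)=a_\pw(w_2,I_\M z_h)$. Applying these to $w_2=I_\M z_h$ and $w_2=I_\M u_h$ respectively gives both terms on the left equal to $a_\pw(I_\M u_h,I_\M z_h)$, and the identity follows.

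The proof is essentially a bookkeeping exercise; the only non-trivial point is the simultaneous use of $J$ being a right-inverse of $I_\M$ (via \eqref{eq:right_inverse}) and the Pythagoras-type orthogonality (via \eqref{eq:best_approx}) to reduce the residual terms to a single common expression. The main obstacle is thus not any estimate but keeping the seven-fold sign bookkeeping straight, which is why the identity is isolated as a lemma to be used in the subsequent duality argument for Theorem~\ref{thm:aux1}.
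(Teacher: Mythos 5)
Your proposal is correct and follows essentially the same route as the paper: both begin from the link $a(u,\zeta)=F(\zeta)=A_h(u_h,z_h)$ (testing the continuous equation with $\zeta=JI_\M z_h$ and the discrete equation with $z_h$), both reduce the claim to a purely algebraic bookkeeping step, and both ultimately invoke the two zero identities $a_\pw\bigl((1-J)I_\M u_h,\,I_\M z_h\bigr)=0=a_\pw\bigl(I_\M u_h,\,(1-J)I_\M z_h\bigr)$, which come from \eqref{eq:best_approx} and \eqref{eq:right_inverse} — your residual identity $a_\pw(v,I_\M z_h)+a_\pw(I_\M u_h,\zeta)=2\,a_\pw(I_\M u_h,I_\M z_h)$ is exactly those two facts rewritten. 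The only difference is presentational: you expand all bilinear terms explicitly and cancel, whereas the paper passes through a compressed intermediate identity.
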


\begin{proof} 
Let $\zeta \equiv J I_\nc z_h \in V$ substitute the test function $v$ in  \eqref{eq:weakabstract}. This and the test function $v_h:= z_h$ in \eqref{eq:discrete2} lead to
\[
a(u, \zeta) = F(\zeta) =A_h(u_h,z_h).
\]
This identity    and elementary algebra result in 
\begin{align*}
a(u-v,\zeta) & = a_\pw(u_h-v, \zeta-z_h) + a_{\pw} (I_\nc u_h-v, z_h) +a_{\pw} ( u_h , (1-J) I_\nc z_h)\notag \\
& \quad  +  A_h(u_h,z_h) - a_\pw (u_h, I_\nc z_h) +  a_{\pw} ( u_h-I_\nc u_h , z_h ).
\end{align*}
The formulas  $a_\pw((1-J) I_\nc u_h, I_\nc z_h) =0= a_\pw(I_\nc u_h, (1-J)I_\nc z_h) $  (from \eqref{eq:best_approx} and \eqref{eq:right_inverse})
and   elementary algebra conclude the proof.
\end{proof}

\subsection{Elementary bounds}
\begin{lem} \label{lem:terms1to4}  
Each of the following  terms  
(a)~$a(u-v,z - \zeta)$,   (b)~$  a_\pw(u_h-v, \zeta-z_h) $,
 (c)~$a_{\pw} (I_\nc u_h-v, z_h -I_\nc z_h) $, and  
(d)~$a_{\pw} ( u_h-I_\nc u_h , I_\nc z_h -\zeta) $ 
is bounded in modulus by a constant 
 $ \le {\left (1+ (1+\Lambda_\jc) (1+\Lambda_\nc) \right) ^2} (1+ \Lambda_h)$ 
 times $  \|u-u_h\|_h \trinl  z- I_\nc z \trinr_\pw$. 
\end{lem}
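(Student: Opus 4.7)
The plan is to bound each of the four terms by a Cauchy inequality on $a$ or $a_\pw$ and then estimate the two resulting factors separately. Factors involving $u_h$, $v = JI_\nc u_h$, or $I_\nc u_h$ shall produce $\|u-u_h\|_h$, and factors involving $z_h = I_h I_\nc z$, $I_\nc z_h$, or $\zeta = JI_\nc z_h$ shall produce $\trinl z - I_\nc z\trinr_\pw$, with an additional factor $(1+\Lambda_h)$ coming from \eqref{eq:mod_interpolation_estimate} applied to $v_\nc := I_\nc z$ and $v := z$, followed by a triangle inequality and (\ref{eq:comparabilityofnorms}.b) to obtain $\|z - z_h\|_h \le (1+\Lambda_h)\trinl z - I_\nc z\trinr_\pw$.

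For (a), a Cauchy inequality on $a$ yields $a(u-v, z-\zeta) \le \trinl u-v\trinr \, \trinl z-\zeta\trinr$. Theorem~\ref{thm:abstract_main}.b applied to the pair $(u,u_h)$ controls $\trinl u-v\trinr \le (1+\Lambda_\nc)(1+\Lambda_\jc)\|u-u_h\|_h$, and the same statement applied to $(z,z_h)$ combined with the above bound on $\|z-z_h\|_h$ gives $\trinl z-\zeta\trinr \le (1+\Lambda_\nc)(1+\Lambda_\jc)(1+\Lambda_h)\trinl z - I_\nc z\trinr_\pw$. The product is dominated by the target constant. For (b), Cauchy--Schwarz on $a_\pw$ followed by triangle inequalities around $u$ and around $z$, together with (\ref{eq:comparabilityofnorms}.a) to pass from $\trinl\bullet\trinr_\pw$ to $\|\bullet\|_h$, reduces the two factors to $\trinl u_h - v\trinr_\pw \le (1+(1+\Lambda_\nc)(1+\Lambda_\jc))\|u-u_h\|_h$ and $\trinl \zeta - z_h\trinr_\pw \le (1+(1+\Lambda_\nc)(1+\Lambda_\jc))(1+\Lambda_h)\trinl z - I_\nc z\trinr_\pw$.

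For (c) and (d), after Cauchy--Schwarz on $a_\pw$ one factor is of the form $w_\M - Jw_\M$ for $w_\M = I_\nc u_h$ or $w_\M = I_\nc z_h$ and is controlled by Lemma~\ref{companion}.d with a triangle inequality, \eqref{eq:Inc_bound}, and (\ref{eq:comparabilityofnorms}.a) against $u$ or $z$ respectively; the other factor is of the form $w_h - I_\nc w_h$ and is controlled directly by \eqref{eq:Inc_bound} and (\ref{eq:comparabilityofnorms}.a). The two resulting constants are bounded by $\Lambda_\jc(1+\Lambda_\nc)\Lambda_\nc(1+\Lambda_h)$, well below the target.

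The main obstacle is not analytic but the careful bookkeeping: every triangle inequality is tuned so that each of the four partial bounds collapses to the single uniform constant $(1 + (1+\Lambda_\jc)(1+\Lambda_\nc))^2(1+\Lambda_h)$, without invoking anything beyond Theorem~\ref{thm:abstract_main}.b, Lemma~\ref{companion}.d, the hypotheses \eqref{eq:Inc_bound} and \eqref{eq:mod_interpolation_estimate}, and the norm comparison \eqref{eq:comparabilityofnorms}.
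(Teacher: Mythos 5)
Your proof is correct and follows essentially the same route as the paper: Cauchy inequalities on $a$ or $a_\pw$ reduce each term to a product of two factors, which are then estimated via Lemma~\ref{companion}.d, \eqref{eq:Inc_bound}, \eqref{eq:mod_interpolation_estimate}, and the norm comparison \eqref{eq:comparabilityofnorms}, with the chain $\|z-z_h\|_h \le (1+\Lambda_h)\trinl z - I_\nc z\trinr_\pw$ supplying the $(1+\Lambda_h)$. The only cosmetic difference is that you invoke Theorem~\ref{thm:abstract_main}.b (which the paper itself notes holds for arbitrary pairs, not just the solutions) as a shortcut for the bounds $\trinl u - v\trinr$ and $\trinl z - \zeta\trinr$, whereas the paper's proof re-derives these inline as \eqref{eq:u-v} and \eqref{eq:z-zeta}.
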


\begin{proof}
The assumption \eqref{eq:Inc_bound} (with $(v,v_h)$ replaced by $(u,u_h)$ and $(z,z_h)$)  implies 
\begin{align} \label{eq:uh-I_nc uh}
\|u_h- I_\nc u_h\|_h & \le \Lambda_\nc \|u-u_h\|_h\quad \text{and}\quad \|z_h- I_\nc z_h\|_h  \le \Lambda_\nc \|z-z_h\|_h.
\end{align}
Recall $v\equiv J I_\nc u_h$ and $\zeta \equiv J I_\nc z_h \in V$ to deduce from  Lemma~\ref{companion}.d  that
\begin{align} \label{eq:diff_interpolation}
\trinl v- I_\nc u_h\trinr_\pw & \le \Lambda_\jc \trinl u- I_\nc u_h\trinr_\pw  \quad \text{and}\quad
\trinl \zeta- I_\nc z_h\trinr_\pw  \le \Lambda_\jc \trinl z- I_\nc z_h\trinr_\pw.
\end{align}
The combination of \eqref{eq:uh-I_nc uh}-\eqref{eq:diff_interpolation}  with   (\ref{eq:comparabilityofnorms}.a) 
and triangle inequalities lead to 
\begin{align} \label{eq:u-v}
\trinl u-v\trinr & \le \trinl u-I_\nc u_h \trinr_{\pw} + \trinl v-I_\nc u_h  \trinr_{\pw} \le (1+\Lambda_\jc ) \trinl  u-I_\nc u_h\trinr_\pw \nonumber \\
& \le (1+\Lambda_\jc ) (\trinl  u- u_h\trinr_\pw + \| u_h - I_\nc u_h\|_h) \le (1+\Lambda_\jc )  (1+\Lambda_\nc ) \|u-u_h\|_h.  
\end{align}
The above arguments have not utilized any  solution property and hence also apply for  $(z,\zeta,z_h)$ replacing $(u,v,u_h)$  to reveal (\text{instead of } \eqref{eq:u-v})
\begin{align} \label{eq:z-zeta}
\trinl z-\zeta\trinr & \le (1+\Lambda_\jc )  (1+\Lambda_\nc )  \|z-z_h\|_h.
\end{align}
Consider  $v_\nc:=I_\nc z \in \M(\T)$  with $z_h \equiv I_h I_\nc z=I_h v_\nc $  in \eqref{eq:mod_interpolation_estimate} to show
\begin{equation}\label{eq:z-zetaintermediateccnew}
 \|z_h - I_\nc z\|_h \le \Lambda_h \trinl z- I_\nc z\trinr_\pw. 
 \end{equation}
 This, a  triangle inequality, and  (\ref{eq:comparabilityofnorms}.b)  result in 
\begin{align}\label{eq:intermediate}
 \|  z-z_h\|_h\le (1+ \Lambda_h) \trinl  z- I_\nc z\trinr_\pw.  
 \end{align}
The combination of  \eqref{eq:z-zeta} and  \eqref{eq:intermediate}  proves
\begin{align}\label{eq:intermediatepart2}
 \trinl z-\zeta\trinr \le (1+\Lambda_\jc )  (1+ \Lambda_\nc) (1+ \Lambda_h) \trinl  z- I_\nc z \trinr_\pw. 
\end{align}
  
\begin{proof}[Proof of (a)]
This follows from a Cauchy inequality  plus \eqref{eq:u-v} and \eqref{eq:intermediatepart2}.
\end{proof}

\begin{proof}[Proof of (b)]
A triangle inequality, (\ref{eq:comparabilityofnorms}.a), and \eqref{eq:u-v} verify
\[
\trinl u_h-v\trinr_\pw \le \| u_h-v\|_h  \le (1+(1+\Lambda_\jc )  (1+\Lambda_\nc )) \|u-u_h\|_h.
\]
The triangle inequality with (\ref{eq:comparabilityofnorms}.a) 
and \eqref{eq:intermediate}-\eqref{eq:intermediatepart2} show
\[
\trinl \zeta-z_h \trinr_\pw \le \| \zeta-z_h \|_h\le (1+ \Lambda_h)\left(1+  (1+\Lambda_\jc )  (1+ \Lambda_\nc)\right) \trinl  z- I_\nc z \trinr_\pw. 
\]
A Cauchy inequality and the preceding estimates conclude the proof of (b).
\end{proof}
 
\begin{proof}[Proof of (c)]
 The estimate   \eqref{eq:diff_interpolation}, a triangle inequality, (\ref{eq:comparabilityofnorms}.a),  and \eqref{eq:uh-I_nc uh}  show
 \begin{equation}\label{eqcclowerorderestimatesproofofpartc}
\trinl  v-  I_\nc u_h\trinr_\pw \le \Lambda_{\rm J}   \trinl  u -   I_\nc u_h \trinr_\pw \le \Lambda_\jc (1+ \Lambda_\nc) \|u-u_h\|_h.
 \end{equation}
The combination of \eqref{eq:uh-I_nc uh}  and  \eqref{eq:intermediate} after (\ref{eq:comparabilityofnorms}.a) leads to 
\begin{align}\label{eq:partc}
\trinl z_h- I_\nc z_h\trinr_\pw \le  \|z_h- I_\nc z_h\|_h  \le \Lambda_\nc  (1+ \Lambda_h) \trinl  z- I_\nc z\trinr_\pw.  
\end{align}
 A Cauchy inequality and the preceding estimates conclude the proof of (c).
\end{proof}

\begin{proof}[Proof of (d)]
This follows from a Cauchy inequality  with (\ref{eq:comparabilityofnorms}.a) and the estimates for  $ \| u_h-I_\nc u_h \|_h$ in \eqref{eq:uh-I_nc uh}
and  $ \trinl   \zeta - I_\nc z_h  \trinr_\pw$ in \eqref{eq:diff_interpolation}.
\end{proof}
\renewcommand{\endproof}{}
\end{proof}

\subsection{Discrete consistency bounds}

\begin{lem} \label{lem:term5} 
It holds
\begin{align*}
& A_h(u_h,z_h) - a_\pw (u_h, I_\nc z_h) + a_\pw (u_h- I_\nc u_h,z_h) \\
& \qquad \le  (1+\Lambda_h) ( ( 2+ M_{\rm b}) \Lambda_\nc^2  + (\Lambda_\dc  + \Lambda_{\rm ddc})(1+ \Lambda_\nc) \Lambda_{\rm J}+  \Lambda_{\rm c})  \|u-u_h\|_h \trinl z- I_\nc z\trinr_\pw.
\end{align*}
\end{lem}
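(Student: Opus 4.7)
\medskip\noindent\emph{Proof plan.} The plan is to expand $A_h=a_\pw+b_h+c_h$ via \eqref{eq:bilinear} so that the $a_\pw(u_h,I_\nc z_h)$ contribution cancels, leaving
\[
\mathrm{LHS}=a_\pw(u_h,z_h-I_\nc z_h)+a_\pw(u_h-I_\nc u_h,z_h)+b_h(u_h,z_h)+c_h(u_h,z_h).
\]
I then insert $u_h=I_\nc u_h+(u_h-I_\nc u_h)$ in the first $a_\pw$, $z_h=I_\nc z_h+(z_h-I_\nc z_h)$ in the second $a_\pw$, and $b_h(u_h,z_h)=b_h(u_h,I_\nc z_h)+b_h(I_\nc u_h,z_h-I_\nc z_h)+b_h(u_h-I_\nc u_h,z_h-I_\nc z_h)$ (using \eqref{eqnanihilationpropertyonbh} is not needed here but will be elsewhere). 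Reassembling produces the four groups
\begin{align*}
\text{(i)} \;& a_\pw(I_\nc u_h,z_h-I_\nc z_h)+b_h(I_\nc u_h,z_h-I_\nc z_h),\\
\text{(ii)} \;& a_\pw(u_h-I_\nc u_h,I_\nc z_h)+b_h(u_h,I_\nc z_h),\\
\text{(iii)} \;& 2\,a_\pw(u_h-I_\nc u_h,z_h-I_\nc z_h)+b_h(u_h-I_\nc u_h,z_h-I_\nc z_h),\\
\text{(iv)} \;& c_h(u_h,z_h),
\end{align*}
matching respectively the $\Lambda_\dc$, $\Lambda_{\rm ddc}$, $(2+M_{\rm b})\Lambda_\nc^2$, and $\Lambda_{\rm c}$ contributions in the target constant.

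\medskip\noindent Group (i) has exactly the shape of the discrete consistency condition \eqref{eq:dis_consistency}; I apply it with $v_\nc=I_\nc u_h$, $w_h=z_h$, and the \emph{auxiliary} choices $v=J I_\nc u_h\in V$ and $w=z$. The non-obvious choice $v=JI_\nc u_h$ (rather than $v=u$) is what injects the factor $\Lambda_\jc$: Lemma~\ref{companion}.d gives $\trinl I_\nc u_h-J I_\nc u_h\trinr_\pw\le \Lambda_\jc\trinl I_\nc u_h-u\trinr_\pw$, and then \eqref{eq:comparabilityofnorms}.a combined with a triangle inequality and \eqref{eq:Inc_bound} yields $\trinl I_\nc u_h-u\trinr_\pw\le (1+\Lambda_\nc)\|u-u_h\|_h$. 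Together with $\|z_h-z\|_h\le (1+\Lambda_h)\trinl z-I_\nc z\trinr_\pw$ from \eqref{eq:intermediate}, group (i) is bounded by $\Lambda_\dc\Lambda_\jc(1+\Lambda_\nc)(1+\Lambda_h)\|u-u_h\|_h\trinl z-I_\nc z\trinr_\pw$. Group (ii) is treated symmetrically via \eqref{eq:dual_dis_consistency} with $v_h=u_h$, $w_\nc=I_\nc z_h$, $v=u$, and $w=JI_\nc z_h$; Lemma~\ref{companion}.d now controls $\trinl JI_\nc z_h-I_\nc z_h\trinr_\pw$ by $\Lambda_\jc\trinl z-I_\nc z_h\trinr_\pw\le \Lambda_\jc(1+\Lambda_\nc)(1+\Lambda_h)\trinl z-I_\nc z\trinr_\pw$ (triangle and \eqref{eq:intermediate} again), producing the $\Lambda_{\rm ddc}\Lambda_\jc(1+\Lambda_\nc)(1+\Lambda_h)$ contribution.

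\medskip\noindent Group (iii) is purely routine: a Cauchy inequality with \eqref{eq:comparabilityofnorms}.a for the two $a_\pw$ terms and \eqref{eq:b_bound} for the $b_h$ term, combined with $\|u_h-I_\nc u_h\|_h\le \Lambda_\nc\|u-u_h\|_h$ from \eqref{eq:Inc_bound} and $\|z_h-I_\nc z_h\|_h\le \Lambda_\nc\|z-z_h\|_h\le \Lambda_\nc(1+\Lambda_h)\trinl z-I_\nc z\trinr_\pw$ from \eqref{eq:Inc_bound} and \eqref{eq:intermediate}, delivers the clean bound $(2+M_{\rm b})\Lambda_\nc^2(1+\Lambda_h)\|u-u_h\|_h\trinl z-I_\nc z\trinr_\pw$. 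Group (iv) is bounded immediately by \eqref{eq:c_bound} with $v_h=u_h$, $w_h=z_h$, $v=u$, $w=0$ (or equivalently $w=z$), followed by \eqref{eq:intermediate}, yielding $\Lambda_{\rm c}(1+\Lambda_h)\|u-u_h\|_h\trinl z-I_\nc z\trinr_\pw$. Summing the four contributions reproduces exactly the constant $(1+\Lambda_h)\bigl((2+M_{\rm b})\Lambda_\nc^2+(\Lambda_\dc+\Lambda_{\rm ddc})(1+\Lambda_\nc)\Lambda_\jc+\Lambda_{\rm c}\bigr)$ asserted in the lemma.

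\medskip\noindent The main obstacle is identifying the right regrouping in the first paragraph: the add-and-subtract of $I_\nc u_h$ and $I_\nc z_h$ must be done so that (i) and (ii) land \emph{precisely} in the template of \eqref{eq:dis_consistency} and \eqref{eq:dual_dis_consistency} respectively, with no leftover cross terms that only the $\Lambda_\jc$-free Cauchy estimates could handle. The second subtlety, which is what produces the $\Lambda_\jc$ factor in the target (rather than a sharper $\Lambda_\jc$-free bound), is the deliberate choice of the auxiliary elements $v=JI_\nc u_h$ and $w=JI_\nc z_h$ in the two consistency conditions, which trades a slightly larger constant for a uniform and symmetric presentation aligned with the estimates of Lemma~\ref{lem:terms1to4}.
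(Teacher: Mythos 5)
Your regrouping into (i)--(iv) is identical to the paper's decomposition in \eqref{eq:key_part1a}--\eqref{eq:key_part1b}, and the choice of auxiliary elements $v=JI_\M u_h$, $w=JI_\M z_h$ in the two consistency conditions --- together with the routine Cauchy/stabilisation bounds for groups (iii) and (iv) --- reproduces the paper's estimates term by term, yielding the exact same constant. One nit: in group (iv), $w=0$ in \eqref{eq:c_bound} would leave the factor $\|z_h\|_h$ rather than $\|z-z_h\|_h$, so $w=z$ is the correct (and \emph{not} equivalent) choice; you do arrive there via \eqref{eq:intermediate}, but the parenthetical equivalence claim is misleading.
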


\begin{proof} 
Recall  $b_h( I_\nc u_h, I_\nc z_h)=0$ from  \eqref{eqnanihilationpropertyonbh} and exploit \eqref{eq:bilinear} with elementary (but lengthy) algebra 
to check that the  left-hand side {\rm LHS} of the assertion is equal to 
 \begin{align}\label{eq:key_part1a}
 \text{\rm LHS}&=  (2a_\pw+b_h)(u_h- I_\nc u_h,z_h-I_\nc z_h)  + (a_\pw+ b_h)(I_\nc u_h,z_h-I_\nc z_h) \\
 \label{eq:key_part1b}
&\quad  + (a_\pw ( u_h -I_\nc u_h, I_\nc z_h  )+b_h( u_h, I_\nc z_h))  +c_h(u_h,z_h)
 \end{align}
with the short notation, e.g., $(2a_\pw+b_h)(\bullet,\bullet):= 2a_\pw(\bullet,\bullet)+b_h(\bullet,\bullet)$, for the sum of the  bilinear forms announced in Subsection \ref{subsectionProofoftheoremthm:abstract_main}.  The two lines \eqref{eq:key_part1a}-\eqref{eq:key_part1b} of expressions 
for the {\rm LHS} give rise to four estimates. The 
continuity of $a_\pw(\bullet,\bullet)$  and  $b_h(\bullet,\bullet)$ in \eqref{eq:b_bound}, \eqref{eq:uh-I_nc uh},  and \eqref{eq:partc} prove
 \begin{align*}
(2a_\pw+ b_h)(u_h -I_\nc u_h, z_h - I_\nc z_h)  
\le (2+ M_{\rm b})   \Lambda_\nc^2 (1+ \Lambda_h) \|u-u_h\|_h  \trinl z- I_\nc z \trinr_\pw.
\end{align*}
The discrete consistency \eqref{eq:dis_consistency} leads in the last term in  \eqref{eq:key_part1a} to a product of 
$\trinl  v-  I_\nc u_h\trinr_\pw$ controlled in \eqref{eqcclowerorderestimatesproofofpartc}
and  $ \|z-z_h\|_h$ controlled in  \eqref{eq:intermediate}. This results in
\begin{align*}
 (a_\pw+ b_h)(I_\nc u_h,z_h-I_\nc z_h) \le 
 \Lambda_\dc \Lambda_{\rm J} (1+\Lambda_\nc)(1+ \Lambda_h)\|u-u_h\|_h \trinl z- I_\nc z\trinr_\pw.
\end{align*}
The dual discrete consistency in \eqref{eq:dual_dis_consistency} applies to the first two terms
in \eqref{eq:key_part1b} and leads to   $\Lambda_{\rm ddc} \|u-u_h\|_h$ times $\trinl \zeta -I_\nc  z_h\trinr_\pw $
controlled with (\ref{eq:comparabilityofnorms}.b) in \eqref{eq:diff_interpolation}. {This with (\ref{eq:uh-I_nc uh}.b) and \eqref{eq:intermediate} result in
\begin{align*}
a_\pw ( u_h -I_\nc u_h, I_\nc z_h  )+b_h( u_h, I_\nc z_h)  
 \le  \Lambda_{\rm ddc} \Lambda_{\rm J}(1+\Lambda_\M )(1+\Lambda_h) \|u-u_h\|_h \trinl z- I_\nc z\trinr_\pw.
\end{align*}}
The last term in  \eqref{eq:key_part1b} is controlled in \eqref{eq:c_bound}. This and   \eqref{eq:intermediate}  show 
\[ c_h(u_h,z_h) \le \Lambda_{\rm c} \|u-u_h\|_h \|z-z_h\|_h  \le
\Lambda_{\rm c} (1+ \Lambda_h) \|u-u_h\|_h \trinl z- I_\nc z\trinr_\pw .\]
A combination of the preceding four estimates with \eqref{eq:key_part1a}-\eqref{eq:key_part1b} concludes the proof.
 \end{proof}

\subsection{Proof of Theorem \ref{thm:aux1}}\label{subsectProofofTheoremthm:aux1}

{
Given $2-\sigma \le s\le 2$, there exists a constant $ 0< C_{\rm int}(s)  <\infty$ (which exclusively depends on the shape regularity of $\T$ and $s$)
such that the solution $z \in V$  of the dual problem  in  Section~\ref{sec:dual} satisfies  (with Lemma \ref{interpolationerrorestimatesI}.c)  that
\begin{equation}
\trinl  z - I_\nc z \trinr _\pw \le C_{\rm int}(s)  h_{\rm max}^{2-s}  \|z \|_{ H^{4-s}(\Omega)} 
\le   C_{\rm int}(s)   C_{\rm reg}(s)  h_{\rm max}^{2-s} \| G \|_{H^{-s}(\Omega)} .  \label{eq:est}
\end{equation}}
\begin{proof}[Proof of (a)]
Recall $\|u -v\|_{H^{s}(\Omega)}=a(u-v,z)$ from Subsection \ref{sec:dual} and its formula
in Lemma \ref{ref:key_identity}. Lemma \ref{lem:terms1to4} applies to the first four terms and  Lemma  \ref{lem:term5} to the remaining three.
The resulting estimate reads 
\[
\|u -v\|_{H^{s}(\Omega)}\le \constant{} \label{theoremlowerorder}   \|u-u_h\|_h \trinl  z- I_\nc z \trinr_\pw
\]
with 
$\constant[\ref{theoremlowerorder}]=(1+\Lambda_h){\big(4(1+(1+ \Lambda_\jc)(1+\Lambda_\nc))^2} + ( 2+ M_{\rm b}) \Lambda_\nc^2  + (\Lambda_\dc  + \Lambda_{\rm ddc})(1+ \Lambda_\nc) \Lambda_{\rm J}+  \Lambda_{\rm c})\big).$ This and 
\(
\trinl z-I_\nc z\trinr_\pw  \le C_{\rm int}(s)C_{\rm reg }(s) h_{\max}^{2-s}
\)
from \eqref{eq:est} prove  Theorem \ref{thm:aux1}.a.
\end{proof}

\begin{proof}[Proof of (b)] The norm in ${H^s(\T)}=\prod_{T\in\T} H^s(T)$ is the $\ell^2$ norm of those contributions $ \|\bullet  \|_{H^s(T)}$ for all $T\in\T$. The 
Sobolev-Slobodeckii semi-norm over $\Omega$ involves double integrals over $\Omega\times\Omega$ and so is larger   than or equal to the sum of the 
contributions over $T\times T$ for all the triangles $T\in\T$, i.e.,    $ \sum_{T\in\T} |\bullet  |_{H^s(T)}^2\le  |\bullet  |_{H^s(\Omega)}^2$ for any $1<s<2$.
Hence  Theorem \ref{thm:aux1}.a implies 
\begin{equation}\label{eqrefccnewa1b}
 \|u -J I_\nc u_h\|_{H^{s}(\T)} \le \constant[\ref{2}](s)\, h_{\max}^{2-s}  \|u-u_h\|_h\quad\text{for all }s\text{ with }2-\sigma \le s\le2.
\end{equation}
Since  $u_h \in  P_2(\T)$, Theorem~\ref{lemmaccapproximationproperties}.d provides the estimate
\[
 \|u_h -J I_\nc u_h\|_{H^{s}(\T)}\lesssim  h_{\max}^{2-s}  \|u-u_h\|_h.
\]
The triangle inequality in the norm of $H^s(\T)$ concludes the proof of  Theorem~\ref{thm:aux1}.b. 
\end{proof}
{{
\subsection{Verification of {\bf (H)} and $\widehat{\text{\bf (H)}}$ }}
 For  the choice $P=Q= J \circ I_\M$,   Theorem \ref{lemmaccapproximationproperties}.c.  shows that \eqref{quasioptimalsmoother}-\eqref{quasioptimal} hold for all the lowest-order schemes $V_h \subseteq P_2(\T)$ considered in this paper.
This subsection verifies {\bf (H)}-${\widehat{\text{\bf (H)}}}$.
\begin{lem}[Verification of {\bf (H)}-${\widehat{\text{\bf (H)}}}$] \label{lemma:linear_bound} Suppose the assumptions of Theorems~\ref{thm:abstract_main} and \ref{thm:aux1}.  Any $v_h \in V_h$,  $v:= J I_\nc v_h \in V $, $w_h \in V_h$, and $w = JI_{\rm M} w_h \in V $ satisfy
\begin{align*}
{\text{\bf (H)}}  \; A_{h}(v_h,w_h) - a(v,w) & \le \Lambda_1   \|v_{h} - v\|_{h} \|w_{h}  \|_h, \\
{\widehat{\text{\bf (H)}}} \; A_{h}(v_h,w_h) - a(v,w) & \le \Lambda_2 
  \|v_{h} - v\|_{h} \|w_{h} - w \|_h
\end{align*}
with 
$\Lambda_1:= (1+ \Lambda_\M) (\Lambda_{\rm dc} +  \Lambda_{\text{\rm J}} M_{\rm M}  \|J\|  ) + (1 + M_{\rm b}) \Lambda_{\rm M} +\Lambda_{\rm c} $,  and
$\Lambda_2:=\Lambda_{\rm ddc} + (1 + M_{\rm b}) \Lambda_{\rm M}^2  +  (1 + \Lambda_{\rm M})(1+\Lambda_{\rm M}+ \Lambda_{\rm dc}  ) + \Lambda_{\rm c}$. 
\end{lem}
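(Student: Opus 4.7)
The plan is to establish a single algebraic identity for $A_h(v_h,w_h) - a(v,w)$ that exposes the four structural hypotheses of the framework, then bound each resulting term with the matching assumption. The derivation rests on three elementary observations: (i) $I_\M v = I_\M(J I_\M v_h) = I_\M v_h$ and $I_\M w = I_\M w_h$ by $I_\M J = \mathrm{id}$ on $\M(\T)$ from \eqref{eq:right_inverse}; (ii) $b_h(I_\M v_h, I_\M w_h) = 0$ by the anihilation \eqref{eqnanihilationpropertyonbh}; (iii) two applications of Morley orthogonality \eqref{eq:best_approx} together with $a = a_\pw$ on $V\times V$ give $a(v,w) = a_\pw(I_\M v_h, I_\M w_h) + a_\pw(v - I_\M v, w - I_\M w)$. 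Expanding $A_h = a_\pw + b_h + c_h$ from \eqref{eq:bilinear} via the splittings $v_h = I_\M v_h + (v_h - I_\M v_h)$ and $w_h = I_\M w_h + (w_h - I_\M w_h)$ and applying (ii)-(iii) yields
\begin{align*}
A_h(v_h, w_h) - a(v,w) &= [a_\pw(v_h - I_\M v_h, w_h) + b_h(v_h - I_\M v_h, w_h)] \\
&\quad + [a_\pw(I_\M v_h, w_h - I_\M w_h) + b_h(I_\M v_h, w_h - I_\M w_h)] \\
&\quad + c_h(v_h, w_h) - a_\pw(v - I_\M v, w - I_\M w).
\end{align*}

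The second bracket is precisely the LHS of the discrete consistency \eqref{eq:dis_consistency} with $v_\M = I_\M v$, producing the factor $\Lambda_{\mathrm{dc}}\trinl I_\M v - v\trinr_\pw\|w_h - w\|_h$, and $\trinl I_\M v - v\trinr_\pw \le (1+\Lambda_\M)\|v_h - v\|_h$ follows from \eqref{eq:Inc_bound} and a triangle inequality. The first bracket is controlled by Cauchy-Schwarz on $a_\pw$ (via (\ref{eq:comparabilityofnorms}.a)) combined with \eqref{eq:b_bound} on $b_h$, giving $(1+M_{\rm b})\Lambda_\M\|v_h - v\|_h\|w_h\|_h$; the $c_h$-term is bounded by \eqref{eq:c_bound}; and the last term $a_\pw(v - I_\M v, w - I_\M w)$ by Cauchy-Schwarz followed by the same triangle-inequality conversion in both arguments. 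For ${\bf (H)}$, the surviving factors $\|w_h - w\|_h$ and $\trinl w - I_\M w\trinr_\pw$ must be recast in $\|w_h\|_h$: \eqref{eq:c_bound} with $w = 0 \in V$ handles $c_h$ directly; Lemma \ref{companion}.d applied to $I_\M w_h \in \M(\T)$ with $v = 0 \in V$ combined with \eqref{eq:continuityconstantsccMnc} yields $\trinl J I_\M w_h - I_\M w_h\trinr_\pw \le \Lambda_{\rm J} M_\M\|w_h\|_h$, and $\|w_h - w\|_h \le \|w_h - I_\M w_h\|_h + \trinl I_\M w_h - J I_\M w_h\trinr_\pw$ is controlled likewise through \eqref{eq:Inc_bound} with $v = 0$.

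For ${\widehat{\text{\bf (H)}}}$, the strategy is analogous but refines the first bracket through the further split
\begin{align*}
&a_\pw(v_h - I_\M v_h, w_h) + b_h(v_h - I_\M v_h, w_h) \\
&\qquad = [a_\pw(v_h - I_\M v_h, w_h - I_\M w_h) + b_h(v_h - I_\M v_h, w_h - I_\M w_h)] \\
&\qquad \quad + [a_\pw(v_h - I_\M v_h, I_\M w_h) + b_h(v_h, I_\M w_h)],
\end{align*}
where $b_h(v_h - I_\M v_h, I_\M w_h) = b_h(v_h, I_\M w_h)$ by \eqref{eqnanihilationpropertyonbh}. The second bracket is exactly the LHS of the dual discrete consistency \eqref{eq:dual_dis_consistency} and contributes $\Lambda_{\mathrm{ddc}}\|v_h - v\|_h\trinl w - I_\M w\trinr_\pw$. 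All remaining factors $\trinl w - I_\M w\trinr_\pw$ and $\|w_h - I_\M w_h\|_h$ are then converted to $\|w_h - w\|_h$ via \eqref{eq:Inc_bound} and triangle inequalities (e.g., $\trinl w - I_\M w\trinr_\pw \le (1+\Lambda_\M)\|w_h - w\|_h$ after inserting $\pm w_h$). The main obstacle is not any single estimate but the careful bookkeeping of the constants so that every contribution assembles exactly into the claimed $\Lambda_1$ and $\Lambda_2$.
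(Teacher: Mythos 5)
Your decomposition (split $v_h$ and $w_h$ by $\pm I_\M$, use the annihilation \eqref{eqnanihilationpropertyonbh}, and peel off the discrete and dual discrete consistency terms plus the Morley orthogonality residual) is exactly the paper's argument, including the further split of $(a_\pw+b_h)(v_h-I_\M v_h, w_h)$ to isolate the dual consistency form for $\widehat{\mathbf{(H)}}$. The only cosmetic difference is that you keep the orthogonality residual as $-a_\pw(v-I_\M v, w-I_\M w)$ while the paper rewrites it as $a_\pw((1-J)v_\M, Jw_\M)$ (the two agree via $a_\pw((1-J)v_\M, w_\M)=0$), which only changes the bookkeeping of the constant in $\Lambda_1$ and is immaterial for the statement.
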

\begin{proof}[Proof of {\bf (H)}]
For $w_{\rm M} := I_{\rm M} w, $ and $v_{\rm M} := I_{\rm M} v $,  \eqref{eq:right_inverse} implies  $w_{\rm M} = I_{\rm M} w_h$,  and
$v_{\rm M} = I_{\rm M} v_h$.   The definition of $A_h(\bullet,\bullet)$, algebraic manipulations, and \eqref{eqnanihilationpropertyonbh}  result in
	\begin{align}
A_{h}(v_h,w_h) - a(v,w) 
	={}& a_{\rm pw}(v_h,w_h) + b_{h}(v_h,w_h) + c_{h}(v_h,w_h) - a(v,w)  \nonumber\\
	&\hspace{-4cm}={} a_{\rm pw}(v_h - v_{\rm M},w_{h}) +
b_{h}(v_h -v_\M,w_{h}) + a_{\rm pw}( v_{\rm M},w_{h} - w_{\rm M})    + b_h( v_{\rm M},w_h - w_{\rm M})  \nonumber \\
&\hspace{-3.6cm}
	+ c_{h}(v_h,w_h)+ a_{\pw}(v_\M, w_\M) - a(v,w) . \label{eqn:algebra_split1}
	\end{align}
The boundedness of $a_{\rm pw}(\bullet,\bullet)$,  \eqref{eq:b_bound},  \eqref{eq:Inc_bound}, and  \eqref{eq:comparabilityofnorms} prove
	\begin{align*} 
	a_{\rm pw}(v_{h} - v_{\rm M},w_{h} ) + b_h(v_h - v_{\rm M},w_h ) \le{}& 
	(1 + M_{\rm b}) \Lambda_{\rm M} \|v_{h} - v\|_{h} \|w_{h}  \|_h.
	\end{align*}
The discrete consistency condition \eqref{eq:dis_consistency} (with $w=0$),  a triangle inequality, \eqref{eq:Inc_bound},  and \eqref{eq:comparabilityofnorms} show
	\begin{align*} 
	a_{\rm pw}(v_{\rm M},w_h - w_{\rm M}) + b_{h}(v_{\rm M},w_{h} - w_{\rm M}) \le  \Lambda_{\rm dc}(1+ \Lambda_\M) \|v_h - v\|_{h} \|w_{h}  \|_h.
	\end{align*}
The bound in \eqref{eq:c_bound} with the choice $w=0$ implies
	\begin{align*} 
	c_{h}(v_h,w_h) \le \Lambda_{\rm c} \|v_{h} - v\|_{h} \|w_{h}  \|_h.
	\end{align*}
The orthogonality condition \eqref{eq:best_approx}, Lemma \ref{lem:MorleyCompanion}.d.,    \eqref{eq:Inc_bound},  and \eqref{eq:continuityconstantsccMnc},   result in 
	\begin{align*} 
	a_{\rm pw}(v_{\rm M},w_{\rm M}) - a(v,w)  &= a_{\pw}((1-J)v_\M, J w_\M) 
\le \Lambda_{\text{J}}(1+\Lambda_\M) M_{\rm M}  \|J\| \|v_{h} - v\|_{h}\|w_{h}  \|_h.
	\end{align*}
A combination of the last four displayed estimates in \eqref{eqn:algebra_split1} leads to the desired result.
\end{proof}
\begin{proof}[Proof of ${\widehat{\text{\bf (H)}}}$]
An alternate split of the left-hand side of the desired estimate leads to 
	\begin{align}
A_{h}(v_h,w_h) - a(v,w) 
	&={} a_{\rm pw}(v_h - v_{\rm M},w_{\rm M}) + b_{h}(v_h,w_{\rm M}) + a_{\rm pw}(v_{h} - v_{\rm M},w_{h} - w_{\rm M}) \nonumber \\ 
&\hspace{-1cm}+ b_h(v_h - v_{\rm M},w_h - w_{\rm M})  
	+ a_{\rm pw}(v_{\rm M},w_h - w_{\rm M}) + b_{h}(v_{\rm M},w_{h} - w_{\rm M}) \nonumber \\
&\hspace{-1cm} +  a_{\rm pw}(v_{\rm M},w_{\rm M}) - a(v,w) + c_{h}(v_h,w_h). \label{eqn:algebra_split}
	\end{align}
	The discrete consistency condition \eqref{eq:dual_dis_consistency} shows
	\begin{align*} 
	a_{\rm pw}(v_h - v_{\rm M},w_{\rm M}) + b_{h}(v_h,w_{\rm M}) \le \Lambda_{\rm ddc} \|v - v_{h}\|_h \trinl w - w_{\rm M} \trinr_{\rm pw}.
	\end{align*}
	The boundedness of $a_{\rm pw}(\bullet,\bullet)$,  \eqref{eq:b_bound},  and \eqref{eq:Inc_bound} prove
	\begin{align*} 
	a_{\rm pw}(v_{h} - v_{\rm M},w_{h} - w_{\rm M}) + b_h(v_h - v_{\rm M},w_h - w_{\rm M}) \le{}& 
	(1 + M_{\rm b}) \Lambda_{\rm M}^2 \|v_{h} - v\|_{h} \|w_{h} - w \|_h.
	\end{align*}
	The discrete consistency condition \eqref{eq:dis_consistency} and \eqref{eq:comparabilityofnorms} lead to 
	\begin{align*} 
	a_{\rm pw}(v_{\rm M},w_h - w_{\rm M}) + b_{h}(v_{\rm M},w_{h} - w_{\rm M}) \le  \Lambda_{\rm dc}(1+ \Lambda_\M) \|v_{h} - v\|_{h} \|w_{h} - w \|_h.
	\end{align*}
	The orthogonality condition \eqref{eq:best_approx} and \eqref{eq:Inc_bound}  result in 
	\begin{align*}
	a_{\rm pw}(v_{\rm M},w_{\rm M}) - a(v,w)  &= -a_{\rm pw}(v-v_{\rm M},w - w_{\rm M}) 
	\le (1 + \Lambda_{\rm M})^2\|v_{h} - v\|_{h} \|w_{h} - w \|_h. 
	\end{align*}
The bound in \eqref{eq:c_bound} implies
	\begin{align*} 
	c_{h}(v_h,w_h) \le \Lambda_{\rm c} \|v_{h} - v\|_{h} \|w_{h} - w \|_h.
	\end{align*}
A substitution of the last five displayed estimates 
in~\eqref{eqn:algebra_split} leads to the desired result.
\end{proof}

\begin{rem}[Theorem \ref{thm:qo2} implies Theorem \ref{thm:aux1}]
For $v:= JI_\M u_h \in V$, recall $\|u -v\|_{H^{s}(\Omega)}=a(u-v,z)$ from Subsection \ref{sec:dual}.
Theorem~\ref{thm:qo2} applies as Lemma \ref{lemma:linear_bound} holds and with \eqref{eq:comparabilityofnorms} leads to 
$a(u-v,z) \le \widehat{C_{\rm qo}} \|u-u_h\|_h \|z-I_\M z\|_h$.  
\end{rem}

}


\section{Modified dGFEM} \label{sec:DGFEM}
\noindent   The bilinear form $A_h(\bullet,\bullet):=A_\dg(\bullet,\bullet)$  \cite{Baker:1977:DG, FengKarakashian:2007:CahnHilliard} is 
defined, for all $v_2,w_2 \in V_h:=P_2(\T)$, by
\begin{align}
 A_\dg(v_2,w_2)
  &:=
   a_\NC (v_2,w_2)  +b_h(v_2, w_2) + c_\dg(v_{2},w_{2}), \label{eq:AhindG} 
   \end{align}
\begin{subequations}
\begin{align}
   b_h(v_2, w_2)& := -\Theta {\cal J}(v_2, w_2) -   {\cal J}(w_2, v_2), \label{eq:bhindG} \\
     {\cal J}(v_2, w_2) & :=  \sum_{E\in\E} 
    \int_E \jump{\nabla_\NC v_2}  \cdot \mean{D^2_\NC w_2}\nu_E  \ds
\end{align}
\end{subequations}
with  $c_\dg(\bullet,\bullet)$  from \eqref{eq:ccdefineJsigma1sigma2}  and given $-1 \le \Theta \le 1$. Let the jumps $[\bullet]_E$ across and the averages $\mean{\bullet}$  at  an edge $E\in\E$ from Subsection~\ref{subsectionEquivalentnorms}
act componentwise. Recall from Theorem~\ref{lemmaonhnormisanorm} the equivalent discrete norms $\|\bullet\|_h\approx \|\bullet\|_\dg$ in $H^2(\T)$ defined in \eqref{common:norm}-\eqref{eq:jh} 
and  \eqref{eq:ccdefinedGnorm}. Set $\Theta=1$ (resp. $\Theta=-1$) to obtain the symmetric (resp. non symmetric) interior penalty Galerkin formulation; 
see \cite{MozoSuli07} for an alternative  formulation.
Appropriate positive parameters $\sigma_1,\sigma_2$  in  \eqref{eq:ccdefineJsigma1sigma2} guarantee \eqref{eq:a_ellipticity}.
 
\begin{lem}[Boundedness and ellipticity of $A_{\dg}(\bullet,\bullet)$] { \cite{FengKarakashian:2007:CahnHilliard,MozoSuli07}} \label{ellipticity}
(a) Any $v_2, w_2 \in P_2(\T)$ satisfy
$ A_{\dg}(v_{2},w_{2}) \lesssim \|v_{2}\|_\dg \|w_{2} \|_\dg.$ 
(b) For  $\Theta =-1$ and any  $\sigma_\dg= \sigma_1=\sigma_2>0$, $\|v_{2}\|_{\dg}^2 \leq A_{\dg}(v_{2},v_{2}) $ holds for all $ v_{2}\in\St$. 
(c)  For $-1 < \Theta \le 1$ and  a sufficiently large parameter  $\sigma_\dg= \sigma_1=\sigma_2>0$, there exists  $\alpha>0$ (which depends on $\sigma_\dg$ and the shape regularity of $\T$) such that $\alpha \|v_{2}\|_{\dg}^2 \leq A_{\dg}(v_{2},v_{2})$ for all  $v_{2}\in\St.$ 
\end{lem}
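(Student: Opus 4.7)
My plan is to prove all three parts through a single master estimate on the consistency form $\mathcal{J}(v_2, w_2)$ and then to specialize it. The target inequality is
$$|\mathcal{J}(v_2,w_2)| \lesssim \trinl w_2\trinr_\pw\, \sqrt{c_\dg(v_2,v_2)/\min(\sigma_1,\sigma_2)}\quad\text{for all } v_2,w_2\in P_2(\T).$$
First, I would apply a Cauchy--Schwarz inequality on each edge $E\in\E$ to separate $[\nabla_\pw v_2]_E$ and $\langle D^2_\pw w_2\rangle_E\nu_E$. Since $D^2 w_2|_T$ is constant for $w_2|_T\in P_2(T)$, a trivial scaling gives $\|\langle D^2 w_2\rangle_E\nu_E\|_{L^2(E)}^2 \lesssim h_E^{-1}\|D^2 w_2\|_{L^2(\omega(E))}^2$. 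To handle $[\nabla v_2]_E$, I would split it into its $\nu_E$-component, which is already penalised by the $\sigma_2 h_E^{-1}$ term in $c_\dg$, and its $\tau_E$-component: since the tangential derivative along $E$ commutes with the jump, $[\partial_{\tau_E} v_2]_E = \partial_{\tau_E}[v_2]_E$ is the derivative of the edge polynomial $[v_2]_E \in P_2(E)$, so a one-dimensional inverse estimate yields $\|[\partial_{\tau_E} v_2]_E\|_{L^2(E)}^2 \lesssim h_E^{-2}\|[v_2]_E\|_{L^2(E)}^2$, which is absorbed by the $\sigma_1 h_E^{-3}$ term in $c_\dg$.

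Given this master estimate, part (a) is an assembly of three Cauchy--Schwarz type bounds: $a_\pw(v_2,w_2)\le\trinl v_2\trinr_\pw\trinl w_2\trinr_\pw$ and $c_\dg(v_2,w_2)\le c_\dg(v_2,v_2)^{1/2}c_\dg(w_2,w_2)^{1/2}$ are immediate, while $b_h(v_2,w_2)=-\Theta\mathcal{J}(v_2,w_2)-\mathcal{J}(w_2,v_2)$ is controlled by applying the master estimate twice (once with arguments swapped), leading to a bound $\lesssim\|v_2\|_\dg\|w_2\|_\dg$.

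For part (b), the identity $b_h(v_2,v_2) = \mathcal{J}(v_2,v_2)-\mathcal{J}(v_2,v_2)=0$ when $\Theta=-1$ collapses $A_\dg(v_2,v_2)$ to $\trinl v_2\trinr_\pw^2 + c_\dg(v_2,v_2) = \|v_2\|_\dg^2$ for any positive $\sigma_\dg$, which is the required lower bound with optimal constant $1$; this is the easy case and requires no absorption.

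Part (c) is the main obstacle. Here $b_h(v_2,v_2) = -(1+\Theta)\mathcal{J}(v_2,v_2)$ with $1+\Theta>0$, and the master estimate combined with the weighted Young inequality yields, for any $\varepsilon>0$,
$$(1+\Theta)|\mathcal{J}(v_2,v_2)| \le \varepsilon\,\trinl v_2\trinr_\pw^2 + \frac{C(1+\Theta)^2}{4\varepsilon\,\sigma_\dg}\,c_\dg(v_2,v_2),$$
where $C$ depends only on the shape regularity of $\T$. I would then pick $\varepsilon<1$ and subsequently $\sigma_\dg$ large enough to force $C(1+\Theta)^2/(4\varepsilon\sigma_\dg)<1$, so that
$$A_\dg(v_2,v_2) \ge (1-\varepsilon)\,\trinl v_2\trinr_\pw^2 + \bigl(1- C(1+\Theta)^2/(4\varepsilon\sigma_\dg)\bigr)\, c_\dg(v_2,v_2) \ge \alpha\|v_2\|_\dg^2$$
with $\alpha:=\min\{1-\varepsilon,\, 1- C(1+\Theta)^2/(4\varepsilon\sigma_\dg)\}>0$. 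The delicate point is to quantify the threshold for $\sigma_\dg$ purely in terms of the shape-regularity constant of $\T$ (via the constant hidden in the trace and inverse estimates); this is the standard bookkeeping carried out in the cited references \cite{FengKarakashian:2007:CahnHilliard,MozoSuli07} and need not be repeated in full.
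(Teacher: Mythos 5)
The paper does not reprove this lemma; it is stated with a citation to \cite{FengKarakashian:2007:CahnHilliard,MozoSuli07}. Your argument is correct and is essentially the standard proof found in those references: the decisive technical step — which you identify correctly — is the decomposition of $[\nabla_\pw v_2]_E$ into normal and tangential parts, with the tangential jump $[\partial_{\tau_E}v_2]_E=\partial_{\tau_E}[v_2]_E$ controlled by a one-dimensional inverse estimate on the edge polynomial $[v_2]_E\in P_2(E)$, so that it is absorbed by the $\sigma_1 h_E^{-3}$ penalty while the normal jump is handled directly by the $\sigma_2 h_E^{-1}$ penalty. Parts (b) and (c) then follow exactly as you describe: the identity $b_h(v_2,v_2)=-(1+\Theta)\mathcal{J}(v_2,v_2)$ gives exact cancellation for $\Theta=-1$ and, for $-1<\Theta\le 1$, a Young-inequality absorption whose threshold on $\sigma_\dg$ depends only on shape regularity.
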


Throughout this paper, the parameter $\sigma_\dg$ is chosen to guarantee the ellipticity of $A_\dg(\bullet,\bullet)$ in Lemma \ref{ellipticity} 
with the short notation $\sigma_\dg \approx 1 \approx \alpha$.  
The modified dGFEM  \eqref{eq:discrete2} seeks the solution $u_\dg \in P_2(\T)$ to
\begin{equation} \label{eq:dg}
 A_\dg (u_\dg,v_2) = F(JI_\M v_2) 
 \quad\text{for all } v_2 \in P_2(\T).
\end{equation}
 
\begin{thm}[error estimates]\label{thm:dg}  The  solution $u \in V$ to \eqref{eq:weakabstract}  and  the solution $u_\dg \in P_2(\T)$  to \eqref{eq:dg}   satisfy
(a) $\|u- u_\dg\|_h \lesssim  \trinl u- I_\M u \trinr_\pw$ and 
(b)  if $\Theta=1$ and  $F\in H^{-s}(\Omega) $ for  $2-\sigma \le s \le 2$, then  $\|u- J I_\M u_\dg\|_{H^s(\Omega)} + \|u- u_\dg\|_{H^s(\T)}  \lesssim h_{\max}^{2-s}    \| u- u_\dg \|_h$.
\end{thm}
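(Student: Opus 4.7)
The plan is to apply the abstract framework: Theorem \ref{thm:abstract_main} for part (a) and Theorem \ref{thm:aux1} for part (b). The set-up is $V_h := P_2(\T)$, $A_h := A_\dg$, with $a_\pw$, $b_h$, $c_h := c_\dg$ as in \eqref{eq:AhindG}--\eqref{eq:bhindG} and \eqref{eq:ccdefineJsigma1sigma2}. Since $\M(\T)\subset P_2(\T)$, the transfer operator $I_h:\M(\T)\to V_h$ is simply the identity and \eqref{eq:mod_interpolation_estimate} holds with $\Lambda_h=0$. The norm equivalence $\|\bullet\|_h\approx \dgnorm{\bullet}$ from Theorem \ref{lemmaonhnormisanorm} reduces the verification of \eqref{eq:a_ellipticity}--\eqref{eq:comparabilityofnorms} to Lemma \ref{ellipticity} and the definition of $\|\bullet\|_h$ in \eqref{common:norm}. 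The bounds \eqref{eq:Inc_bound}--\eqref{eq:continuityconstantsccMnc} for $I_\M$ follow from Example \ref{ex:new1}. The stabilization estimate \eqref{eq:c_bound} for $c_\dg$ is routine: the jump semi-norm in $c_\dg$ is dominated by $\|\bullet\|_h$, and vanishes on $V\times V$ since $V\subset H^2_0(\Omega)$ has no jumps. The boundedness \eqref{eq:b_bound} of $b_h$ reduces to a Cauchy inequality on each edge together with the standard trace inverse estimate $h_E^{1/2}\|D^2_\pw w\|_{L^2(E)}\lesssim \|D^2_\pw w\|_{L^2(\omega(E))}$ for piecewise quadratics, absorbed in $\|\bullet\|_\dg\approx \|\bullet\|_h$.

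The first key point to check is the annihilation property \eqref{eqnanihilationpropertyonbh}: for any $v_\M,w_\M\in\M(\T)$, I would show $\mathcal{J}(v_\M,w_\M)=0$. Since $w_\M|_T\in P_2(T)$ the Hessian $\mean{D^2_\pw w_\M}\nu_E$ is a constant vector along each $E\in\E$; therefore
\[
\mathcal{J}(v_\M,w_\M) = \sum_{E\in\E}\mean{D^2_\pw w_\M}\nu_E\cdot\int_E[\nabla_\pw v_\M]_E\ds.
\]
The normal component $\int_E [\partial v_\M/\partial\nu_E]_E\ds$ vanishes by the Morley edge degree of freedom, while the tangential component $\int_E[\partial v_\M/\partial t_E]_E\ds=[v_\M]_E\big|_{\partial E}$ vanishes by continuity of $v_\M$ at the vertices (and by the boundary condition on $\partial\Omega$). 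Hence $\mathcal{J}(v_\M,w_\M)=0$, and symmetrically $\mathcal{J}(w_\M,v_\M)=0$, giving $b_h(v_\M,w_\M)=0$.

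The main obstacle is the discrete consistency condition \eqref{eq:dis_consistency}. For $v_\M\in\M(\T)$, $w_2\in P_2(\T)$, $v,w\in V$, I would compute $a_\pw(v_\M,w_2-I_\M w_2)$ via piecewise integration by parts (the piecewise $\Delta^2 v_\M =0$) to express it as a sum of edge integrals involving $[\nabla_\pw(w_2-I_\M w_2)]_E\cdot D^2v_\M\nu_E$, and then subtract the matching terms from $b_h(v_\M,w_2-I_\M w_2)$ using the annihilation above together with the integral-mean properties of $I_\M$ on edges. The residual is controlled by Cauchy--Schwarz on each edge plus the trace inequality and the approximation estimate of $I_\M$ (Theorem \ref{int_err}): it yields $\lesssim \trinl v_\M - v\trinr_\pw \|w_2-w\|_h$ after inserting $v\in V$ (whose jumps vanish) and passing to the jump seminorm in $\|\bullet\|_h$. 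This gives \eqref{eq:dis_consistency} and completes the hypotheses of Theorem \ref{thm:abstract_main}, proving (a).

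For (b), the choice $\Theta=1$ renders $a_\pw+b_h$ symmetric, so Remark \ref{exampleoneq:dual_dis_consistency} yields the dual discrete consistency \eqref{eq:dual_dis_consistency} from \eqref{eq:dis_consistency} and the annihilation property just established. All assumptions of Theorem \ref{thm:aux1} are then in force. Since $u_\dg\in P_2(\T)$, both conclusions of Theorem \ref{thm:aux1} apply and, combined with the best-approximation estimate (a) and Lemma \ref{interpolationerrorestimatesI}.c for $\trinl u-I_\M u\trinr_\pw\lesssim h_{\max}^{2-s}\|u\|_{H^{4-s}(\Omega)}\lesssim h_{\max}^{2-s}\|F\|_{H^{-s}(\Omega)}$, deliver the piecewise Sobolev and post-processed $H^s$ error bounds in (b).
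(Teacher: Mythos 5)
Your overall plan—apply the abstract framework of Theorems~\ref{thm:abstract_main} and \ref{thm:aux1} after verifying the hypotheses—is exactly the paper's approach, and your verifications of the transfer operator ($I_h=\mathrm{id}$, $\Lambda_h=0$), the norm equivalences, the interpolation bounds via Example~\ref{ex:new1}, the annihilation \eqref{eqnanihilationpropertyonbh}, the stabilization bound \eqref{eq:c_bound}, and the $\Theta=1$ symmetry reduction of \eqref{eq:dual_dis_consistency} to Remark~\ref{exampleoneq:dual_dis_consistency} are all correct and match the paper.

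The genuine gap is in your sketch of the discrete consistency condition \eqref{eq:dis_consistency}. After the piecewise integration by parts of $a_\pw$ and the product rule for the jump, the surviving residual is
\[
\sum_{E\in\E}\int_E \mean{\nabla_\pw(w_2 - I_\M w_2)}\cdot \jump{D^2_\pw v_\M}\nu_E\,\ds,
\]
and after the normal component drops (by the Morley edge integral-mean property), only the tangential contribution $\int_E\mean{\partial_s(w_2-I_\M w_2)}\jump{\partial^2_{\tau_E\nu_E} v_\M}\,\ds$ remains. You propose to handle this by ``inserting $v\in V$ (whose jumps vanish)'' so that $\jump{D^2_\pw v_\M}$ becomes $\jump{D^2_\pw(v_\M - v)}$. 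But for a general $v\in V=H^2_0(\Omega)$ the Hessian $D^2v$ has no well-defined $L^2$ trace on an edge, so ``the jump of $D^2 v$ vanishes'' is not a meaningful statement and cannot be used to pass to $\trinl v_\M-v\trinr_\pw$. The paper's key move is different and essential: it inserts the \emph{companion} $J v_\M\in (HCT(\T)+P_8(\T))\cap H^2_0(\Omega)$, a piecewise-polynomial $C^1$ function whose Hessian does have edge traces and which satisfies the \emph{Hadamard jump condition} $\jump{\partial^2_{\tau_E\nu_E}Jv_\M}=0$ across every $E\in\E$. This yields $\jump{\partial^2_{\tau_E\nu_E}v_\M}=\jump{\partial^2_{\tau_E\nu_E}(1-J)v_\M}$, which is then controlled on the HCT subtriangles by discrete trace inequalities and by Lemma~\ref{companion}.d in the form $\trinl(1-J)v_\M\trinr_\pw\le\Lambda_\jc\trinl v_\M-v\trinr_\pw$. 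Without this identification your ``Cauchy--Schwarz + trace inequality + interpolation estimate'' chain has no way to bound $\|\jump{\partial^2_{\tau\nu}v_\M}\|_{L^2(E)}$ by $\trinl v_\M - v\trinr_\pw$, so the estimate \eqref{eq:dis_consistency} (and hence part (a)) does not close. A secondary, minor point: in part (b) the right-hand side $h_{\max}^{2-s}\|u-u_\dg\|_h$ already appears in the claim, so the final appeal to Lemma~\ref{interpolationerrorestimatesI}.c and part (a) is superfluous; the conclusion is exactly Theorem~\ref{thm:aux1}.a--b applied to $u_\dg\in P_2(\T)$.
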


\noindent{\it Overview of the proof.}
The assertion (a) follows from  Theorem \ref{thm:abstract_main}  for the particular spaces, operators, norms, and  bilinear forms defined below. The application of Theorem \ref{thm:abstract_main}  requires the proof of the abstract conditions 
 \eqref{eq:a_ellipticity}-\eqref{eq:comparabilityofnorms}, 
 {\eqref{eq:Inc_bound}} 
 -\eqref{eq:c_bound}.
 The assertion (b) follows from Theorem \ref{thm:aux1} provided 
 \eqref{eq:dual_dis_consistency} holds. 
 
 \medskip
 
\noindent  {\it Setting and first consequences.} Recall  $V_h: = P_2(\T)$  and  the norms $\|\bullet\|_h$ and $\|\bullet\|_\dg$ in 
\eqref{common:norm}-\eqref{eq:jh}  and  \eqref{eq:ccdefinedGnorm}. 
Recall the Morley  interpolation  operator $I_\M$  from Definition \ref{defccMorleyinterpolation} and the 
companion operator $J$ from Lemma \ref{companion}.    
Recall that   Lemma \ref{ellipticity}  guarantees  \eqref{eq:a_ellipticity}-\eqref{eq:comparabilityofnorms}.  
The  dGFEM in \eqref{eq:dg} corresponds to  \eqref{eq:discrete2} with  the solution $u_h:=u_\dg$.  
Example \ref{ex:new2} implies  \eqref{eq:Inc_bound}-\eqref{eq:continuityconstantsccMnc}. 
Set  $I_h:=\text{id}$ and observe  \eqref{eq:mod_interpolation_estimate} holds for $\Lambda_h= 0$.
Recall $A_h(\bullet, \bullet) := A_\dg(\bullet, \bullet) $, $b_h(\bullet, \bullet) :=-\Theta {\cal J}(\bullet, \bullet)- {\cal J}^*(\bullet, \bullet)$, 
and $c_h(\bullet, \bullet) :=c_\dg(\bullet, \bullet) $ in \eqref{eq:bilinear}. 

\begin{proof}[Proof of \eqref{eq:dis_consistency}]  Since the integral $\int_E \jump{\nabla_\pw v_\M} \ds=0$ vanishes for   $v_\M \in \M(\T)$ 
and since   $\mean{D^2_\pw w_2}$ is constant  on any edge $E\in \E$ for  any $w_2 \in P_2(\T)$,
\begin{align}\label{jterm}
\J(v_\M, w_2 ) = \sum_{E \in \E} \int_E \jump{\nabla_\pw v_\nc} \cdot \mean{D^2_\pw w_2} \nu_E \ds=0.
\end{align}
Hence the term $\Theta\J(v_\M, w_2-I_\M w_2)$ disappears below in  definitions of $(a_\pw+b_h)(\bullet,\bullet )$, 
written in the short notation of Subsection \ref{subsectionProofoftheoremthm:abstract_main}; 
$ (a_\pw+ b_h)(v_\M, w_2-I_\M w_2)$ is equal to 
\[
\sum_{T \in \T} \int_K D^2_\pw v_\M : 
D^2_{\pw} (w_2-I_\M w_2) \dx - \sum_{E \in \E} \int_E \jump{\nabla_\pw (w_2-I_\M w_2)} \cdot \mean{D^2_{\pw} v_\M}\nu_E {\ds}  .
\]
A piecewise integration by parts of the term  $a_{\pw} (v_\M, w_2-I_\M w_2 )$ shows equality to
\[
\sum_{E \in \E} \int_E  \left(  \jump{\nabla_\pw (w_2-I_\M w_2) \cdot ( D^2_{\pw} v_\M \: \nu_E) } 
  - \jump{\nabla_\pw (w_2-I_\M w_2) } \cdot \mean{D^2_{\pw} v_\M} \nu_E  \right) \ds .
\]
The product rule for the jump terms results in 
\begin{align}
(a_\pw+ b_h)(v_\M, w_2-I_\M w_2)
 = \sum_{E \in \E} \int_E  \mean{\nabla_\pw (w_2-I_\M w_2) } \cdot \jump{D^2_{\pw} v_\M} \nu_E  \ds. \label{eq:internew}
\end{align}
The further analysis concerns the split of the vector  $\mean{\nabla_{\pw}(w_2-I_\M w_2) }$$ \in P_1(E; {\mathbb R}^2)\equiv  P_1(E)^{2}$   
into normal and tangential components, 
\[
\mean{\nabla_{\pw}(w_2-I_\M w_2) }= \langle \partial (w_2-I_\M w_2)/ \partial \nu_E \rangle_E \nu_E  + \langle \partial (w_2-I_\M w_2)/ \partial s   \rangle_E \tau_E.
\]  
The integral of the normal component $ \langle \partial (w_2-I_\M w_2)/ \partial \nu_E \rangle_E$ over an edge $E\in\E$ vanishes  by definition
of $I_\M w_2$  in Definition \ref{defccMorleyinterpolation}. Since the jump $\jump{\partial^2_{\nu_E \nu_E} v_\M}:= \nu_E \cdot  \jump{   D^2_\pw v_\M } \nu_E$ is constant 
along $E$, the integral 
$\int_E \langle \partial (w_2-I_\M w_2)/ \partial \nu_E \rangle_E \jump{\partial^2_{\nu_E \nu_E} v_\M}\ds=0 $ vanishes. The tangential components 
with  $\jump{\partial^2_{\tau_E \nu_E} v_\M}:=  \tau_E\cdot \jump{D^2 v_\M}\nu_E$
remain in 
\[
(a_\pw+ b_h)(v_\M, w_2-I_\M w_2)
 = \sum_{E \in \E} \int_E  \mean{ \partial (w_2-I_\M w_2) /\partial s }  \jump{ \partial^2_{\tau_E \nu_E}v_\M}  \ds.
\]
The  Hadamard jump condition asserts that the  jump in the derivative of a globally continuous function that is smooth up to the boundary on either side of an interface $E$
points merely  in the normal direction $\nu_E$ only.  The function $Jv_\M$ has a continuous gradient  $\nabla Jv_\M$
and  $\nabla Jv_\M$ is smooth on each triangle $\widehat{T}$ in the HCT  refinement of $\T$. Hence  $\jump{ \partial^2_{\tau_E \nu_E} Jv_\M}=0$ along $E$.
Consequently,
\begin{align} \label{eq:internewcc2}
(a_\pw+ b_h)(v_\M, w_2-I_\M w_2)
 = \sum_{E \in \E} \int_E  \mean{ \partial (w_2-I_\M w_2) /\partial s }  \jump{ \partial^2_{\tau_E \nu_E}(1-J)v_\M}  \ds.
\end{align}
For an interior edge $E=\partial T_+\cap \partial T_-=\partial \widehat T_+\cap \partial \widehat T_-$ with the neighbouring triangles 
$T_\pm\in \T$ and the two neighbouring sub-triangles  $\widehat T_\pm:=\text{\rm conv}(E,\mmid(T_\pm))$
from the HCT  refinement of $\T$ with patches
$\widehat \omega(E)=\text{int}( \widehat T_+\cup \widehat T_-)\subset \omega(E)=\text{int}( T_+\cup T_-)$,
Cauchy and  triangle inequalities show
\begin{align*}
 I(E)&:=\int_E  \mean{ \partial (w_2-I_\M w_2) /\partial s }  \jump{ \partial^2_{\tau_E \nu_E}(1-J)v_\M}  \ds\\
&\le  \frac 12 \left(\| \nabla(w_2-I_\M w_2)|_{T_+ }\|_{L^2(E)} + \| \nabla(w_2-I_\M w_2)|_{T_-} \|_{L^2(E)}\right)\\
& \hspace{3mm}\times  \left(\| D^2(1-J)v_\M|_{\widehat T_+ }\|_{L^2(E)}+\| D^2(1-J)v_\M|_{\widehat T_- }\|_{L^2(E)}\right).
\end{align*}
Since $(w_2-I_\M w_2)|_{\widehat  T_\pm}$ resp. $(1-J)v_\M|_{\widehat T_\pm }$ is a polynomial of degree at most
$2$ resp. $3$ in the triangle $\widehat T_\pm$, the discrete  trace inequalities  
\begin{align*}
 \| \nabla(w_2-I_\M w_2)|_{T_\pm }\|_{L^2(E)} &\le 
h_{E}^{1/2} \constant[\ref{standardshaperegularityconstantpart2}]  \|h_{T_\pm}^{-1} \nabla_\pw (w_2-I_\M w_2) \|_{L^2(\widehat  T_\pm)} \\
\| D^2(1-J)v_\M|_{\widehat T_\pm  }\|_{L^2(E)}&\le  h_{E}^{-1/2} \constant[\ref{standardshaperegularityconstantpart2}]  
  | (1-J)v_\M |_{H^2(\widehat T_\pm)}
\end{align*}
hold for a  constant $ \constant{}\label{standardshaperegularityconstantpart2} \approx 1$ that  solely depends on the shape regularity of 
$\widehat  T_\pm$ (and so on the shape regularity of  $\T$). 
This leads to
 \[
I(E)\le\constant[\ref{standardshaperegularityconstantpart2}]^2  \| h_\T^{-1}  \nabla_\pw(w_2-I_\M w_2) ||_{L^2(\widehat  \omega(E))}  
\| D^2_\pw (1-J)v_\M \|_{L^2(\widehat \omega(E))}
\]
for any interior edge $E\in\E(\Omega)$ with the reduced edge-patch $\widehat \omega(E)$. The same estimate follows for
a boundary edge $E\in\E(\partial\Omega)$ (the proof omits   $T_-$, $\widehat  T_-$, and some factor $1/2$ above). 
Since the reduced edge-patches $(\widehat \omega(E):E\in\E)$
have no overlap, the sum of all the above estimates of $I(E)$  in \eqref{eq:internewcc2}  and Cauchy  inequalities prove
\begin{align} \label{eq:internewcc2b}
(a_\pw+ b_h)(v_\M, w_2-I_\M w_2)
 \le \constant[\ref{standardshaperegularityconstantpart2}]^2  | h_\T^{-1}  (w_2-I_\M w_2) |_{H^1(\Omega)}   \trinl  (1-J)v_\M \trinr_\pw.
\end{align}
Recall from Theorem \ref{int_err}.a. (with $\|(1-\Pi_0) D^2_\pw w_2\|=0$ for $w_2 \in P_2(\T)$) that
 \begin{align*} 
  |h_\T^{-1}(w_2-I_\M w_2)|_{H^1(\T)} 
  \lesssim j_h (w_2-w)   
  \lesssim   \| w_2 -  w \|_h
  \end{align*}
for all $w\in V$. Lemma~\ref{lem:MorleyCompanion}.d shows 
$ \trinl  (1-J)v_\M \trinr_\pw\le \Lambda_\jc \trinl  v_\M - v \trinr_\pw $
for any $v\in V$. The combination of this with \eqref{eq:internewcc2b} concludes the proof of  \eqref{eq:dis_consistency}. 
\end{proof}

\begin{proof}[Proof of \eqref{eqnanihilationpropertyonbh}] This follows from \eqref{jterm}.
\end{proof}

\begin{proof}[Proof of \eqref{eq:b_bound}]
{ This follows from the boundedness of $b_h(\bullet, \bullet)$ (see \cite{FengKarakashian:2007:CahnHilliard,MozoSuli07}). }
\end{proof}
 
\begin{proof}[Proof of \eqref{eq:c_bound}] 
The jump contributions in \eqref{eq:ccdefineJsigma1sigma2} vanish for arguments in $V$,  $c_\dg(v_2,w_2)= c_\dg(v-v_2,w-w_2)$ on
the left-hand side of  \eqref{eq:c_bound}. 
Recall that $c_\dg(\bullet,\bullet)$ is a semi-norm scalar product and the Cauchy inequality with the induced semi-norm 
$|\bullet|_{c_\dg}:=c_\dg(\bullet,\bullet)^{1/2}\le \|\bullet \|_\dg$ is a part of the discrete norm $\|\bullet \|_\dg$.
This leads to  \eqref{eq:c_bound} with $\Lambda_{\rm c}=1$.
 \end{proof}
   
\begin{proof}[Proof of \eqref{eq:dual_dis_consistency} for $\Theta=1$]
This follows from Remark~\ref{exampleoneq:dual_dis_consistency}. 
 \end{proof}

\section{Modified $C^0$IP method} \label{sec:C0IP}
 For the right-hand side $F \in H^{-2}(\Omega)$,   the modified  $C^0$IP method is based on the continuous Lagrange $P_2$ finite element space $V_h:=S^2_0(\T):= P_2(\T) \cap H^1_0(\Omega)$ and penalty terms along edges. The scheme is a modification of the dGFEM in Section \ref {sec:DGFEM} but with trial and test functions restricted to $S^2_0(\T):=P_2(\T) \cap H^1_0(\Omega)$. The norm $\ipnorm{\bullet}$ is $\|\bullet \|_\dg$ with restriction to $S^2_0(\T)$ and excludes one of the penalty parameters of the modified dGFEM.

\medskip \noindent 
Given $\sigma_\ip>0$,
the bilinear forms  \cite{BS05,CCGMNN18}  for $v_\ip,w_\ip\in S^2_0(\T) $ are defined  by
\begin{align} 
A_\ip(v_\ip,w_\ip) :=
   &    a_\NC(v_\ip,w_\ip)  +b_h (v_\ip, w_\ip)+c_\ip(v_\ip,w_\ip),  \text{ where }  \label{eq:AhinC0IP} \\
    c_\ip(v_\ip,w_\ip)& :=  
    \sum_{E \in \E} \frac{\sigma_\ip}{h_E} \int_E \jump{\frac{\partial v_\ip}{\partial \nu_E}} \jump{\frac{\partial w_\ip}{\partial \nu_E}} \ds, \label{eq:chinC0IP} 
\end{align}
  and $ b_h(\bullet, \bullet)= b_h(\bullet, \bullet)|_{S^2_0(\T)}$ from \eqref{eq:bhindG}.

The modified $C^0$IP method is of the form \eqref{eq:discrete2}  
and seeks $u_\ip \in S^2_0(\T)$ such that 
\begin{align}\label{eq:C0IP}
 A_\ip(u_\ip,v_\ip) = F(JI_\M v_\ip)
 \quad\text{for all } v_\ip \in  S^2_0(\T).
\end{align}
 For all $v+ v_\ip \in V+ S_0^2(\T)$, the discrete norm reads
$ \displaystyle
  \ipnorm{v+v_{\ip}}
:= (\trinl{v+v_\ip}\trinr_\pw^2 + c_\ip(v_\ip, v_\ip))^{1/2}$ and  
$j_h(v_\ip) =  ( \sum_{E \in \E}  ( \fint_E  \jump{{\partial v_{\ip}}/{\partial \nu_E}} \ds )^2 )^{1/2}$.  Theorem~\ref{lemmaonhnormisanorm}  shows  $\|v_{\ip} \|_h \approx \|v_\ip\|_\ip$.
 The coercivity  $ \ipnorm{\cdot}^2 \lesssim A_\ip(\cdot,\cdot)$ on $S^2_0(\T)$ 
holds provided $\sigma_\ip$ is sufficiently large \cite{BS05,CCGMNN18}. The boundedness $A_{\ip}(v_{\ip},w_{\ip}) \lesssim  \|v_{\ip}\|_\ip \|w_{\ip} \|_\ip$ holds for all $v_\ip, w_\ip$ on $S^2_0(\T)$ and    \eqref{eq:C0IP} has a unique solution $u_\ip \in S^2_0(\T)$. 

\begin{thm}[error estimates]\label{thm:C0IP}  The  solution $u \in V$ to \eqref{eq:weakabstract}  and the solution  $u_\ip \in S^2_0(\T)$ to \eqref{eq:C0IP} 
satisfy (a) $ \|u- u_\ip\|_h \lesssim  \trinl u- I_\M u \trinr_\pw$ and
(b) if $\Theta=1$ and  $F\in H^{-s}(\Omega) $ for  $2-\sigma \le s \le 2$, then $\|u- J I_\M u_\ip\|_{H^s(\Omega)} +\|u- u_\ip\|_{H^s(\T)}   \lesssim h_{\max}^{2-s}    \| u- u_\ip \|_h$.
\end{thm}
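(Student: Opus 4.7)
The plan is to invoke Theorem~\ref{thm:abstract_main} for (a) and Theorem~\ref{thm:aux1} for (b), with the identifications $V_h := S^2_0(\T)$, $A_h := A_\ip$, $c_h := c_\ip$, $b_h$ the restriction of $-\Theta\mathcal{J} - \mathcal{J}^*$ to $S^2_0(\T)$, and $I_h := I_\C$ as in Table~\ref{table:summary}. Since $[v_\ip]_E$ vanishes for $v_\ip \in S^2_0(\T)$, the $C^0$IP norm $\|\cdot\|_\ip$ coincides on $S^2_0(\T)$ with the dGFEM norm $\|\cdot\|_\dg$ for matching penalty parameters, and Theorem~\ref{lemmaonhnormisanorm} then yields $\|\cdot\|_h \approx \|\cdot\|_\ip$ on $V + S^2_0(\T) + \M(\T)$. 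The known coercivity and boundedness of $A_\ip$ on $S^2_0(\T)$ deliver \eqref{eq:a_ellipticity}; (\ref{eq:comparabilityofnorms}.a) is immediate and (\ref{eq:comparabilityofnorms}.b) follows because $j_h \equiv 0$ on $V + \M(\T)$. Example~\ref{ex:new2} supplies \eqref{eq:Inc_bound}-\eqref{eq:continuityconstantsccMnc} because $V_h \subseteq P_2(\T)$, while \eqref{eq:mod_interpolation_estimate} for $I_\C : \M(\T) \to S^2_0(\T)$ is the standard enrichment estimate from \cite{CarstensenGallistlNataraj2015}.

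The heart of the matter is the discrete consistency condition \eqref{eq:dis_consistency}, whose proof I expect to copy the dGFEM argument essentially verbatim. For $v_\M \in \M(\T)$ and $w_\ip \in S^2_0(\T)$, the Morley DOF (vertex continuity yields a vanishing tangential integral of the jump, and midpoint continuity handles the normal component) give $\int_E [\nabla v_\M] \ds = 0$; since $\langle D^2_\pw(w_\ip - I_\M w_\ip) \rangle \nu_E$ is constant along $E$, this forces $\mathcal{J}(v_\M, w_\ip - I_\M w_\ip) = 0$. A piecewise integration by parts together with the product rule for jumps then reduces
\begin{equation*}
(a_\pw + b_h)(v_\M, w_\ip - I_\M w_\ip) = \sum_{E \in \E} \int_E \langle \nabla(w_\ip - I_\M w_\ip) \rangle \cdot [D^2_\pw v_\M] \nu_E \, \ds.
\end{equation*}
Splitting the average gradient into normal and tangential parts, the normal contribution vanishes because $[\partial^2_{\nu_E \nu_E} v_\M]$ is constant along $E$ and $\int_E \langle \partial(w_\ip - I_\M w_\ip)/\partial \nu_E \rangle \ds = 0$ by Definition~\ref{defccMorleyinterpolation}. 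The Hadamard continuity $[\partial^2_{\tau_E \nu_E} J v_\M] = 0$ for $Jv_\M \in V$ (Lemma~\ref{companion}) then replaces $[D^2 v_\M]$ by $[D^2(1-J)v_\M]$ in the tangential piece, leading to the analogue of \eqref{eq:internewcc2}. Discrete trace inequalities on the HCT sub-triangles bound the resulting expression by $\trinl (1 - J)v_\M \trinr_\pw |h_\T^{-1}(w_\ip - I_\M w_\ip)|_{H^1(\T)}$, and Lemma~\ref{companion}.d together with Theorem~\ref{int_err}.a control these factors by $\trinl v_\M - v \trinr_\pw$ and $\|w_\ip - w\|_h$, respectively.

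The remaining hypotheses are routine: \eqref{eqnanihilationpropertyonbh} follows from $\mathcal{J}(v_\M, w_\M) = 0$ as above; \eqref{eq:b_bound} is the boundedness of $\mathcal{J}$ in the discrete norm; and \eqref{eq:c_bound} holds with $\Lambda_{\rm c} = 1$ because normal-derivative jumps vanish on $V$, so $c_\ip(v_\ip, w_\ip) = c_\ip(v_\ip - v, w_\ip - w)$ is bounded by the Cauchy-Schwarz inequality for the semi-inner product $c_\ip$. Theorem~\ref{thm:abstract_main} then delivers (a). For (b), the choice $\Theta = 1$ makes $a_\pw + b_h$ symmetric, so Remark~\ref{exampleoneq:dual_dis_consistency} supplies the dual discrete consistency \eqref{eq:dual_dis_consistency}; Theorem~\ref{thm:aux1}.a yields the $H^s(\Omega)$ bound on $u - JI_\M u_\ip$ and Theorem~\ref{thm:aux1}.b gives the $H^s(\T)$ bound on $u - u_\ip$, the latter using $u_\ip \in S^2_0(\T) \subseteq P_2(\T)$. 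The main obstacle will be the discrete consistency step: even though the structure copies the dGFEM proof, care is required to check that the extra smoothness $w_\ip \in H^1_0(\Omega)$ does not spoil any of the cancellations and that the constants depend only on the shape regularity of $\T$.
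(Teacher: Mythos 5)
Your proof is correct and follows the same route as the paper, invoking Theorem~\ref{thm:abstract_main} and Theorem~\ref{thm:aux1} with the $C^0$IP data $V_h=S^2_0(\T)$, $A_h=A_\ip$, $I_h=I_\C$ and verifying the abstract hypotheses. One small observation: the paper avoids your re-derivation of the discrete consistency condition by noting that, since $S^2_0(\T)\subseteq P_2(\T)$ and the $C^0$IP bilinear forms are restrictions of the dGFEM ones, every hypothesis except \eqref{eq:mod_interpolation_estimate} is inherited directly from the dGFEM verification on the superspace, so that the enrichment bound for $I_\C$ is the only genuinely new step---which you handle correctly by citing \cite{CarstensenGallistlNataraj2015}, while the paper both reproduces that argument and gives an alternative self-contained proof via Theorem~\ref{lemmaccapproximationproperties}.
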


\begin{rem}
\noindent
A $C^0$IP discrete scheme  is analysed in \cite{BS05}  for a general $F \in H^{\sigma-2}(\Omega)$.  The consistency of the scheme allows a best approximation \cite[Lemma 8]{BS05} (since $V_h$ subset $H^{2-\sigma}(\Omega)$ in the pure Dirichlet problem for $C^0$IP).  
For $F \in H^{-2}(\Omega)$, a modifed scheme and error estimates for the post-processed solution are derived in \cite[(4.17) and Theorem 4]{BS05}. 

 \end{rem}

\noindent{\it Overview of the proof of Theorem~\ref{thm:C0IP}}.
The proof follows the lines of that of Theorem \ref{thm:dg} and partly from the analysis provided there. 
The bilinear forms 
in the  $C^0$IP are exactly the respective  bilinear forms of the  dGFEM when restricted to the subspace $S^2_0(\T)+M(\T)$. 
With the single exception of \eqref{eq:mod_interpolation_estimate}, all the estimates in 
\eqref{eq:Inc_bound}-\eqref{eq:c_bound}  and \eqref{eq:dual_dis_consistency} for $\Theta=1$ 
follow for $V_h=S^2_0(\T)$ in the  $C^0$IP  from the respective properties verified  in Section~\ref{sec:DGFEM} for $V_h=P_2(\T)$ in the  dGFEM.
The remaining detail is the analysis of the 
 operator $I_h \equiv I_\C: \M(\T) \rightarrow S^2_0(\T)$ (denoted by $I_2^*$ in  \cite[Lemma 3.2]{CarstensenGallistlNataraj2015})  
 defined  by averaging the values of a function { $v_\M \in  \M(\T)$ at the midpoint of an interior edge $E$, 
\begin{align} \label{eq:ic}
(I_\C v_\M)(z)=
\begin{cases}
v_\M(z) \quad \text{for all } z \in \V,\\
\mean{v_\M}(z)  \quad \text{for } z= \text{mid}(E), \; E \in \E(\Omega), \\
0 \quad \text{for } z= \text{mid}(E), \; E \in \E(\partial \Omega).
\end{cases}
\end{align}}

\begin{proof}[Proof of \eqref{eq:mod_interpolation_estimate}]   
This is included in \cite[Lemma 3.2f]{CarstensenGallistlNataraj2015} in a slightly different notation.
In the notation of this paper, Lemma 3.2 of \cite{CarstensenGallistlNataraj2015} shows 
\begin{equation} \label{eq:dgccproof1}
\|v_\M- I_\C v_\M\|^2_h  \lesssim \sum_{E \in \E} h_E^{-2} |\jump{v_\M}({\text{mid} (E)})|^2
\end{equation}
for any $v_\M \in \M(\T)$. Lemma 3.3 of \cite{CarstensenGallistlNataraj2015} controls the upper bound of \eqref{eq:dgccproof1} by the a~posteriori terms 
$\sum_{E \in \E} h_E \|\jump{D^2_\pw v_\M} \tau_E\|^2_{L^2(E)}$. The latter is 
efficient, i.e.,   $\lesssim \trinl v_\M - v \trinr_\pw$ for any $v \in H^2_0(\Omega)$. This leads to  \eqref{eq:mod_interpolation_estimate}. 
Theorem~\ref{lemmaccapproximationproperties} allows for an alternative proof that
departs at  \eqref{eq:dgccproof1} with the  quadratic function   $\jump{v_\M}$ along the edge $E\in\E$. 
Since   $\jump{v_\M}$ vanishes at each end point $z\in\V(E)$  (owing to the continuity of the Morley function $v_\M \in \M(\T)$ at the vertices),
the (exact) Simpson's quadrature rule asserts 
\[
 \jump{v_\M}({\text{mid} (E)}) = 3/2 \fint_E  \jump{v_\M}\ds.
\]
 A Cauchy inequality, the continuity of  $J v_\M\in V$,  triangle and  trace inequalities lead to
 \[
h_E^{-2} |\jump{v_\M}({\text{mid} (E)})|^2\le  \frac 94 h_E^{-3}  \| \jump{v_\M-Jv_\M}\|_{L^2(E)}^2 \lesssim 
 \sum_{\ell=0}^1    |h_\T^{\ell -2}(v_\M-Jv_M) |_{H^\ell (\omega(E))}^2 .
 \] 
This estimate and the finite overlap of the edge-patches $(\omega(E):E\in\E)$ lead to an upper bound 
 in  \eqref{eq:dgccproof1} as in Theorem~\ref{lemmaccapproximationproperties}.c  and so to $\|v_\M- I_\C v_\M\|_h\lesssim \| v_\M-v \|_h=  \trinl v_\M-v   \trinr_\pw$. 
\end{proof}

{
 \begin{rem}[$Q$ is not injective]
  An illustration shall be given for a triangulation $\T=\{T_1,T_2\}$ of a convex quadrilateral  $\bar{\Omega} =T_1 \cup T_2 = \text{ conv} \{P_1,P_2,P_3,P_4\}.$ There is exactly one basis function $b_E \in S^2_0(\T)= V_h$ defined on $T_1= \text{ conv }\{P_1,P_2,P_4\}$ and on 
$T_2= \text{ conv }\{P_2,P_3,P_4\}$ by $b_E= 4 \varphi_2 \varphi_4$ for the nodal basis functions $\varphi_1,\varphi_2,\varphi_3,\varphi_4$. Given $b_E$ for the edge $E= \text{ conv } \{P_2,P_4\}= \partial T_1 \cap \partial T_2$, the normal derivative of $b_E$ along $E$ 
on $T_1$ reads in its integral 
\begin{align*}
 \int_E \partial b_E|_{T_1}/\partial \nu_E  \ds & = \nu_E \cdot \int_E \nabla b_E|_{T_1} \: \ds = 4 \nu_E \cdot \int_E(\varphi_2 \nabla \varphi_4|_{T_1} +
\varphi_4 \nabla \varphi_1|_{T_1}) \ds\\
& = 2|E| \nu_E \cdot (\nabla \varphi_2|_{T_1} + \nabla \varphi_4|_{T_1}) = - 2 |E| \nu_E \cdot \nabla \varphi_1|_{T_1}
\end{align*}
with  $\nabla(\varphi_1+\varphi_2+\varphi_4)=0$ in $T_1$ in the last step.  Elementary geometry shows  $\nabla \varphi_1|_{T_1}=\nu_E/\rho_{E,1}$ for the height $\rho_{E,1}= \frac{2|T_1|}{|E|}$ of $E$ in $T_1$ and $\nu_E$ pointing from $T_1$ into $T_2$. Consequently,  $\displaystyle  \int_E \partial b_E|_{T_1}/\partial \nu_E \ds= \frac{|E|^2}{|T_1|}. $ The analogous calculation for $T_2$ leads to $\displaystyle  \int_E \partial b_E|_{T_2}/\partial \nu_E \ds= -\frac{|E|^2}{|T_2|}$ with a change of sign because  $\nabla \varphi_3|_{T_2} = \nu_E/ \rho_{E,2}$. The definition of $I_\M b_E$ takes the average of the two integral means 
\[
\frac{1}{2} \left(\fint_E \partial b_E|_{T_1}/\partial \nu_E + \fint_E \partial b_E|_{T_2}/\partial \nu_E\right) \ds = \frac{|E|}{2}(|T_1|^{-1}-|T_2|^{-1})
\]
as the value for $\displaystyle \fint_E \partial I_\M b_E/ \partial \nu_E \ds$. Since this is the only degree of freedom in $\M(\T)$ for the triangulation $\T=\{T_1, T_2\}$, it follows that $I: S_0^2(\T) \rightarrow \M(\T)$ is injective if and only of $|T_1|=|T_2|$. (This condition is independent of shape-regularity of $\T$ and thus more involved.)
 \end{rem}
}
\section{Comparison}\label{sec:comparisonandwopsip}
The paper \cite{CarstensenGallistlNataraj2015} has established equivalence of discrete solutions to Morley FEM, $C^0$IP and dGFEM up to oscillations for $F \in L^2(\Omega)$ and for the original schemes with $F_h\equiv F$. The subsequent theorem establishes the three modified schemes with $F_h =F \circ J$ without extra oscillation terms. Throughout this section, the norm $\|\cdot\|_h$ is defined in \eqref{common:norm}-\eqref{eq:jh}.

\begin{thm}\label{thm:MainResult}
The discrete solutions
$u_\M$, $u_\ip$ and $u_\dg$ of the 
Morley FEM, $C^0$IP and dGFEM satisfy
\begin{align*}
\|u-u_\M\|_h
\approx
\|u-u_\dg\|_h
\approx
\|u-u_\ip\|_h
\approx \|(1-\Pi_0) D^2 u\|_{L^2(\Omega)}.
\end{align*}
The equivalence constants $\approx$ depend on shape regularity and on the stabilisation parameters $\sigma_\dg, \sigma_\ip \approx 1$.
\end{thm}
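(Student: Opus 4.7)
The plan is to reduce everything to the quantity $\trinl u - I_\M u\trinr_\pw$, which by the integral-mean property $D^2_\pw I_\M = \Pi_0 D^2$ (Lemma \ref{interpolationerrorestimatesI}.a) is exactly $\|(1-\Pi_0) D^2 u\|$. Thus it suffices to establish, for each of the three discrete solutions $u_h \in \{u_\M, u_\ip, u_\dg\} \subset P_2(\T)$, the two-sided bound
\[
\trinl u - I_\M u\trinr_\pw \;\lesssim\; \|u - u_h\|_h \;\lesssim\; \trinl u - I_\M u\trinr_\pw,
\]
and the theorem then follows since all three quantities sandwich the common value $\|(1-\Pi_0) D^2 u\|$.

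For the upper bounds, I would simply cite the already-established quasi-optimality results: Theorem \ref{thm:energy_norm} gives $\|u-u_\M\|_h = \trinl u-u_\M\trinr_\pw \le \sqrt{1+\Lambda_0^2}\,\trinl u-I_\M u\trinr_\pw$ (note that $j_h(u-u_\M)=0$ by Remark \ref{remark3.1}, so $\|\cdot\|_h$ collapses to $\trinl \cdot\trinr_\pw$ on $V+\M(\T)$); Theorem \ref{thm:dg}.a together with Theorem \ref{lemmaonhnormisanorm} ($\|\cdot\|_h\approx\dgnorm{\cdot}$) gives $\|u-u_\dg\|_h\lesssim \trinl u-I_\M u\trinr_\pw$; and Theorem \ref{thm:C0IP}.a gives $\|u-u_\ip\|_h\lesssim \trinl u-I_\M u\trinr_\pw$.

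For the lower bounds, the key observation is that $u_h \in P_2(\T)$ for all three schemes, so the orthogonality \eqref{eq:best_approx} and the Pythagoras identity \eqref{eq:Pythogoras} yield
\[
\trinl u - I_\M u\trinr_\pw \;=\; \min_{w_2\in P_2(\T)} \trinl u - w_2\trinr_\pw \;\le\; \trinl u - u_h\trinr_\pw \;\le\; \|u - u_h\|_h,
\]
where the last step uses \eqref{eq:comparabilityofnorms}.a. This gives the required lower bound for each of the three methods simultaneously, and no scheme-specific arguments are needed.

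Combining the upper and lower bounds, each of $\|u-u_\M\|_h$, $\|u-u_\ip\|_h$, $\|u-u_\dg\|_h$ is comparable to $\trinl u - I_\M u\trinr_\pw = \|(1-\Pi_0) D^2 u\|$ with constants depending only on shape regularity and the stabilisation parameters $\sigma_\dg,\sigma_\ip\approx 1$ via the three underlying quasi-optimality theorems. There is no genuine obstacle here; the entire content of the theorem lies in the preceding sections, and the real work was the design of the modified schemes (with the smoother $JI_\M$) and the verification of the abstract hypotheses \eqref{eq:dis_consistency}--\eqref{eq:c_bound} for $b_h$ and $c_h$ in Sections \ref{sec:DGFEM} and \ref{sec:C0IP}. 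In particular, the elimination of data oscillations (compared with \cite{CarstensenGallistlNataraj2015}) is already baked into those best-approximation estimates, so no oscillation terms reappear in this final comparison.
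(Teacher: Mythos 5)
Your proof is correct and follows essentially the same route as the paper: upper bounds come from the three best-approximation theorems (Theorem~\ref{thm:energy_norm}, Theorem~\ref{thm:dg}.a, Theorem~\ref{thm:C0IP}.a), and the lower bound comes from the Pythagoras identity \eqref{eq:Pythogoras} combined with $\trinl\bullet\trinr_\pw \le \|\bullet\|_h$, plus the identification $\trinl u-I_\M u\trinr_\pw = \|(1-\Pi_0)D^2u\|$ from Lemma~\ref{interpolationerrorestimatesI}.a. The only (cosmetic) difference is that you give a single uniform lower-bound chain valid for any $u_h\in P_2(\T)$, whereas the paper passes through $\|(1-\Pi_0)D^2u\| = \min_{v_h\in P_2(\T)}\|u-v_h\|_h$ for the dG and $C^0$IP parts; the two are logically equivalent.
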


\begin{rem}[discrete dG norm equivalence, Theorem 4.1, \cite{CarstensenGallistlNataraj2015}]  \label{thm:NormEquiv}
 The norm $\hnorm{\cdot}$  satisfies
\begin{align*}
 \hnorm{\bullet} &= \trinl \bullet \trinr_\pw  \; \text{on } V+\M(\T),  \\  
\hnorm{\bullet} &\approx \dgnorm{\bullet}    \; \text{\;on } V+ P_2(\T),  \\
\hnorm{\bullet} &\approx \ipnorm{\bullet}   \; \text{\;\;on } V+ S^2_0(\T).
\end{align*}
\end{rem}

\begin{proof}
Lemma \ref{interpolationerrorestimatesI}.a.,  the Pythogoras identity, and  
\cite[Theorem 2.2]{NNCC2020}
 show 
$$ \|(1- \Pi_0) D^2 u \|_{L^2(\Omega)} = \trinl u- I_\M u \trinr_\pw \le \trinl u - u_\M \trinr_\pw \lesssim \trinl u- I_\M u \trinr_\pw.$$
The $L^2$ best-approximation property of  $\Pi_0 u$, (\ref{eq:comparabilityofnorms}.a),  Theorem \ref{thm:dg}, and Lemma \ref{interpolationerrorestimatesI}.a. lead to
\begin{align*}
 \|(1- \Pi_0) D^2 u \|_{L^2(\Omega)}  &= \min_{v_h \in P_2(\T)} \|u -v_h\|_h \le \|u-u_\dg\|_h  \\
& \lesssim   \trinl u- I_\M u \trinr_\pw =\|(1- \Pi_0) D^2 u \|_{L^2(\Omega)}.
\end{align*}
Theorem \ref{thm:C0IP} leads to similar results  for $\|u-u_\ip\|_h$. A combination of the above displayed inequalities concludes the proof.
\end{proof}

\section{Modified WOPSIP Method} \label{sec:wopsip}
The weakly over-penalized symmetric interior penalty
(WOPSIP) scheme \cite{BrenGudiSung10} is a penalty method  
with the stabilisation term 
\begin{align}\label{eq:chinWOPSIP}
c_\w(v_\pw,w_\pw)&:= \nonumber
 \sum_{E \in \E} \sum_{z \in {\mathcal V} (E)} h_E^{-4} ([v_\pw]_E(z)) ([w_\pw]_E(z)) \\
 &+ \sum_{E \in \E}  h_E^{-2} (  \fint_E  \jump{{\partial v_\pw}/{\partial \nu_E}} \ds )( \fint_E  \jump{{\partial w_\pw}/{\partial \nu_E}} \ds)
\end{align}
for piecewise smooth functions $v_\pw,w_\pw\in H^2(\T)$. This semi-norm scalar product  $c_{\w}(\bullet,\bullet)$ is an analog to that one behind the jump $j_h$ from \eqref{eq:jh}
with different powers of the mesh-size.  It follows as in Theorem~\ref{lemmaonhnormisanorm} that 
\begin{align}\label{eq:ap}
A_{\w}(v_\pw,w_\pw):=a_\pw(v_\pw,w_\pw)+c_\w(v_\pw,w_\pw)\quad\text{for all }v_\pw,w_\pw\in H^2(\T)
\end{align}
defines a scalar product and so  $\|\bullet\|_\w:= A_{\w}(\bullet,\bullet)^{ 1/2}$  is a norm in $H^2(\T)$. 
Consequently, there exists a unique  solution $ u_\w\in V_h:=P_2(\T)$ to 
\begin{align}
A_{\w}(u_\w,v_2)= F(JI_\M v_2) \quad \text{ for all } v_2 \in P_2(\T).
\label{eq:wopsip} 
\end{align}
The  increased  condition number in the  over-penalization of  the jumps by the negative powers
of the mesh-size in \eqref{eq:chinWOPSIP} can be compensated by some preconditioner \cite[p 218f]{BrenGudiSung10} and the entire 
WOPSIP  linear algebra with  \eqref{eq:wopsip} becomes intrinsically parallel.
 
\begin{thm}[error estimate]\label{thmwopsip} Any $F\in H^{-s}(\Omega) $ with   $2-\sigma \le s \le 2$, the 
 solution $u \in V$ to \eqref{eq:weakabstract} and the solution $u_\w \in P_2(\T)$ to \eqref{eq:wopsip} satisfy 
\begin{align*}
 \trinl u-u_\w  \trinr_\pw^2 + c_\w(u_\w,u_\w) & \le (1+ \Lambda_\w^2)\trinl  u- I_\M u \trinr_\pw^2 +\Lambda_\w^2 \trinl  h_\T I_\M u \trinr_\pw^2; \\
  \|u - J I_\nc u_\w \|_{H^{s}(\Omega)} + \|u -  u_\w \|_{{H^s}(\T)} &\le  \constant[\ref{ccwopsip}](s)  h_{\max}^{2-s} \|u-u_\w\|_\w.
\end{align*}
The constant  $ \Lambda_\w$  exclusively depends on the shape regularity of $\T$, while $\constant{}\label{ccwopsip}(s)$ depends on the shape regularity of $\T$ and on  $s$.
\end{thm}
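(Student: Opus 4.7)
The plan for Theorem \ref{thmwopsip}(a) is to reduce to the discrete error $e_h := I_\M u - u_\w \in P_2(\T)$ and exploit the heavy WOPSIP weights. Since every Morley function has zero vertex jumps and zero mean of normal-derivative jumps, every quadrature point feeding $c_\w$ is annihilated by $I_\M u$; hence $c_\w(I_\M u,\cdot)\equiv 0$ and in particular $c_\w(u_\w,u_\w) = c_\w(e_h,e_h)$. The orthogonality \eqref{eq:best_approx} with $w_2 = e_h$ yields the Pythagoras identity $\trinl u-u_\w\trinr_\pw^2 = \trinl u-I_\M u\trinr_\pw^2 + \trinl e_h\trinr_\pw^2$, so the left-hand side of (a) equals $\trinl u-I_\M u\trinr_\pw^2 + A_\w(e_h,e_h)$.

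Testing the continuous equation with $JI_\M e_h\in V$, the discrete equation with $e_h$, invoking $c_\w(I_\M u,e_h)=0$, and applying \eqref{eq:best_approx} twice (to $I_\M e_h\in P_2(\T)$ and to $I_\M u$ against $JI_\M e_h-I_\M e_h$, the latter using $I_\M J I_\M e_h = I_\M e_h$ from \eqref{eq:right_inverse}), I obtain the key identity
\[
A_\w(e_h,e_h) = a_\pw(I_\M u, e_h - I_\M e_h) - a_\pw(u - I_\M u, JI_\M e_h - I_\M e_h).
\]
The second summand is bounded by Cauchy--Schwarz, \eqref{eq:lambda0}, and $\trinl I_\M e_h\trinr_\pw = \|\Pi_0 D^2 e_h\|\le\trinl e_h\trinr_\pw$ (from Lemma~\ref{interpolationerrorestimatesI}.a) to give $\Lambda_0\trinl u-I_\M u\trinr_\pw\trinl e_h\trinr_\pw$.

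The heart of the proof is the estimate $|a_\pw(I_\M u, e_h - I_\M e_h)|\le C\trinl h_\T I_\M u\trinr_\pw\, c_\w(e_h,e_h)^{1/2}$ with $C$ depending only on shape regularity. Piecewise integration by parts, using that $D^2 I_\M u$ is piecewise constant, rewrites this quantity as $\sum_{E\in\E}\int_E\langle D^2 I_\M u\,\nu_E\rangle_E\cdot[\nabla(e_h-I_\M e_h)]_E\,\ds + \sum_{E\in\E}\int_E[D^2 I_\M u\,\nu_E]_E\cdot\langle\nabla(e_h-I_\M e_h)\rangle_E\,\ds$. Both $\langle D^2 I_\M u\,\nu_E\rangle_E$ and $[D^2 I_\M u\,\nu_E]_E$ are constant on $E$ and factor out. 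Splitting the gradients into normal and tangential components, the normal piece collapses to $h_E\fint_E[\partial_{\nu_E}e_h]_E\,\ds$ because the Morley definition cancels the mean of $[\partial_{\nu_E}I_\M e_h]_E$, and the tangential piece telescopes along $E$ via the fundamental theorem of calculus into $[e_h]_E(z_2)-[e_h]_E(z_1)$ because Morley continuity at vertices kills the $I_\M e_h$ endpoint values. A matched Cauchy--Schwarz in $\ell^2(\E)$ then pairs the $h_E$-weighted Hessian norms of $I_\M u$ (aggregating to $\trinl h_\T I_\M u\trinr_\pw$ by shape regularity) against the $h_E^{-1}$- and $h_E^{-2}$-weighted $e_h$-jump factors (aggregating to $c_\w(e_h,e_h)^{1/2}$). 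Young's inequality on both cross terms absorbs the $A_\w(e_h,e_h)$ contributions on the right. The main obstacle is precisely this matching: one must verify that the $I_\M$ corrections annihilate exactly those pieces of $e_h-I_\M e_h$ that would otherwise carry only the weaker $j_h$-weights, so that the surviving terms are balanced by the heavier $c_\w$-weights and produce the correct $h_\T$-factor on $I_\M u$ rather than an unscalable $\trinl I_\M u\trinr_\pw$.

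For (b), a Hahn--Banach corollary writes $\|u - JI_\M u_\w\|_{H^s(\Omega)} = a(u - JI_\M u_\w, z)$ for a dual solution $z\in V\cap H^{4-s}(\Omega)$ controlled by \eqref{eq:extrareg}. Setting $z_h := I_\M z$ (with $I_h = \mathrm{id}$ and the idempotence $I_\M z_h = z_h$) and $\zeta := JI_\M z$, I imitate Lemma~\ref{ref:key_identity} with the WOPSIP simplifications $b_h = 0$ and $z_h - I_\M z_h = 0$; this decomposes $a(u - JI_\M u_\w, z)$ into primal-approximation terms bounded by $\|u-u_\w\|_\w\trinl z - I_\M z\trinr_\pw$ as in Lemma~\ref{lem:terms1to4} plus a residual $a_\pw(u_\w - I_\M u_\w, z_h) + c_\w(u_\w, z_h)$. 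The $c_\w$-term vanishes since $z_h\in\M(\T)$, and the $a_\pw$-term is handled by the same edge-by-edge integration-by-parts as in paragraph three (now with $I_\M z$ and $u_\w$ in the roles of $I_\M u$ and $e_h$). Lemma~\ref{interpolationerrorestimatesI}.c yields $\trinl z - I_\M z\trinr_\pw \lesssim h_{\max}^{2-s}\|z\|_{H^{4-s}(\Omega)} \lesssim h_{\max}^{2-s}$, completing the first bound in (b); the $\|u - u_\w\|_{H^s(\T)}$ bound then follows by triangle inequality against $JI_\M u_\w$ and Theorem~\ref{lemmaccapproximationproperties}.d applied to the piecewise-polynomial $u_\w\in P_2(\T)$.
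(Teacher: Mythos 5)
Your overall structure (Pythagoras via \eqref{eq:best_approx}, the key identity using $c_\w(I_\M u,\bullet)=0$, and the split of $e_h - JI_\M e_h$ into $e_h - I_\M e_h$ plus $(1-J)I_\M e_h$) is exactly the paper's decomposition. However, your bound on the second summand contains a genuine error. You invoke Lemma~\ref{interpolationerrorestimatesI}.a to assert $\trinl I_\M e_h\trinr_\pw = \|\Pi_0 D^2 e_h\|\le\trinl e_h\trinr_\pw$, but Lemma~\ref{interpolationerrorestimatesI} concerns the \emph{standard} Morley interpolation $I_\M:V\to\M(\T)$. Here $e_h = I_\M u - u_\w\in P_2(\T)$ is generically discontinuous, so the \emph{generalized} interpolation of Definition~\ref{defccMorleyinterpolation} is in force, and that operator does \emph{not} satisfy $D^2_\pw I_\M v_\pw = \Pi_0 D^2_\pw v_\pw$ for $v_\pw\in H^2(\T)$: the extended $I_\M$ averages degrees of freedom across elements, not Hessians. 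A quick counterexample is $e_h$ piecewise constant with a nonzero jump: then $\trinl e_h\trinr_\pw=0$ but $I_\M e_h$ has distinct vertex averages and hence $\trinl I_\M e_h\trinr_\pw>0$, so the claimed inequality fails. The correct control is $\trinl e_h - I_\M e_h\trinr_\pw\lesssim j_h(e_h)$ (Theorem~\ref{int_err}.a with $(1-\Pi_0)D^2_\pw e_h=0$), which reintroduces the jump seminorm. The paper sidesteps this entirely by packaging the bound into Lemma~\ref{ellipticityofcp}, which controls $\trinl(1-J)I_\M v_2\trinr_\pw$ directly by $\Lambda_\w\|v_2-v\|_\w$ (with the $c_\w$-weights doing the work); you would need that lemma, or a reconstruction of it, to repair your second-term estimate.

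For the first summand $a_\pw(I_\M u, e_h - I_\M e_h)$, you propose a direct edge-by-edge integration by parts and factor-matching Cauchy--Schwarz. That is a genuinely different route: the paper bounds this term by the one-line weighted Cauchy inequality $\trinl h_\T I_\M u\trinr_\pw\trinl h_\T^{-1}(e_h-I_\M e_h)\trinr_\pw$ and then invokes Lemma~\ref{ellipticityofcp} (whose proof traces back to the local estimates from Theorem~\ref{int_err}, not integration by parts). Your sketch plausibly leads to the same conclusion, but it is incomplete: you describe the normal/tangential splitting and Morley cancellations only for the jump factor $[\nabla(e_h-I_\M e_h)]_E$. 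The companion term with $\langle\nabla(e_h-I_\M e_h)\rangle_E$ times $[D^2 I_\M u\,\nu_E]_E$ has a tangential piece that telescopes to $\langle e_h\rangle_E(z)-I_\M e_h(z)$ at the vertices, and $I_\M e_h(z)$ is the average over all of $\T(z)$ while $\langle e_h\rangle_E(z)$ only averages the two triangles sharing $E$; these endpoint values are not directly entries of $c_\w$ and require an argument like \eqref{eqn:cc:morleyinterpolationerrorestimateproof4} to reduce them to sums of jumps around the vertex. This is exactly the machinery the paper bundles into Lemma~\ref{ellipticityofcp}, so the clean route is to state and use that lemma rather than re-derive it piecemeal.
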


The  subsequent lemma specifies the constant $\Lambda_\w$ in the best-approximation estimate.

\begin{lem} \label{ellipticityofcp} 
There exists some positive  $\Lambda_\w<\infty$, that  exclusively depends
on the shape regularity of $\T$, such that 
$  \trinl h_\T^{-1}(v_2- I_\M v_2) \trinr_\pw^2 +  \trinl (1-J)I_\M v_2 \trinr_\pw^2 \le \Lambda_\w^2  \| v_2-v \|^2_\w $
holds for all $v_2 \in P_2(\T)$ and all $v\in V$. 
\end{lem}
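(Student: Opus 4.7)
My plan is to fix $v_2 \in P_2(\T)$ and $v \in V$, abbreviate $v_\M := I_\M v_2 \in \M(\T)$, and handle the two summands on the left-hand side separately. The key preliminary observation is that every $v \in V$ has vanishing vertex-jumps and mean normal-derivative jumps, so $c_\w(v_2,v_2) = c_\w(v_2-v, v_2-v)$ and the jump contributions of $v_2$ are controlled directly by $\|v_2-v\|_\w$.

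For the first summand $\trinl h_\T^{-1}(v_2 - v_\M)\trinr_\pw^2$, the idea is to localize via the Morley dual basis. On each $T \in \T$, the quadratic polynomial $(v_2 - v_\M)|_T$ expands as $\sum_{z\in\V(T)} w(z)\psi_z + \sum_{E\in\E(T)} w(E)\psi_E$ exactly as in step~3 of the proof of Theorem~\ref{int_err}. Standard polynomial inverse estimates $|p|_{H^2(T)}\lesssim h_T^{-2}\|p\|_{L^2(T)}$ combined with the basis scalings $\|\psi_z\|_{L^2(T)}\approx h_T$ and $\|\psi_E\|_{L^2(T)}\approx h_T^2$ yield $|\psi_z|_{H^2(T)}\lesssim h_T^{-1}$, $|\psi_E|_{H^2(T)}\lesssim 1$, whence on each triangle
\[
h_T^{-2}|v_2-v_\M|_{H^2(T)}^2 \lesssim h_T^{-4}\sum_{z\in\V(T)}|w(z)|^2 + h_T^{-2}\sum_{E\in\E(T)}|w(E)|^2.
\]
The coefficient estimates \eqref{eqn:cc:morleyinterpolationerrorestimateproof4}--\eqref{eqn:cc:morleyinterpolationerrorestimateproof5}, the shape-regular equivalence $h_E\approx h_T$, and the finite overlap of vertex patches reassemble the upper bound into exactly $c_\w(v_2,v_2)$, giving $\trinl h_\T^{-1}(v_2-v_\M)\trinr_\pw^2 \lesssim c_\w(v_2-v,v_2-v)$.

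For the second summand $\trinl (1-J)v_\M\trinr_\pw^2$, Lemma~\ref{lem:MorleyCompanion}.d applied to the Morley function $v_\M$ and a triangle inequality give
\[
\trinl (1-J)v_\M\trinr_\pw \le \Lambda_\jc \trinl v_\M - v\trinr_\pw \le \Lambda_\jc\bigl(\trinl v_2-v_\M\trinr_\pw + \trinl v_2-v\trinr_\pw\bigr).
\]
Theorem~\ref{int_err}.a applied to $v_2 \in P_2(\T)$ (noting $(1-\Pi_0)D^2_\pw v_2 = 0$) bounds $\trinl v_2-v_\M\trinr_\pw \lesssim j_h(v_2)$, and a direct weight comparison of the $h_E^{-2}$ and $h_E^{0}$ weights in $j_h$ versus the $h_E^{-4}$ and $h_E^{-2}$ weights in $c_\w$ yields $j_h(v_2)^2 \le h_{\max}^2\, c_\w(v_2,v_2)$, with the mild $h_{\max} \le \text{diam}(\Omega)$ absorbed into the final constant.

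The main obstacle is the first summand: I must align the Morley dual-basis scalings with the extra $h_\T^{-1}$ weight so that precisely the over-penalized $h_E^{-4}$ and $h_E^{-2}$ weights of $c_\w$ emerge on the right. A black-box application of Theorem~\ref{int_err}.a only delivers the $j_h$-weights $h_\T^{-2}$ and $h_\T^{0}$ and, together with an inverse estimate, would lose a prohibitive factor $h_{\min}^{-2}$; hence the local dual-basis calculation must be redone rather than cited. The rest is bookkeeping.
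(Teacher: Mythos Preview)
Your proposal is correct and follows essentially the same route as the paper. Both arguments localize the first summand via the Morley dual-basis expansion from the proof of Theorem~\ref{int_err} (the paper phrases this as ``substitution of $v_2$ by $h_T^{-1}v_2$'' in \eqref{eqn:cc:morleyinterpolationerrorestimateproof3} followed by an inverse estimate, which unwinds to exactly your per-basis-function scaling), then use \eqref{eqn:cc:morleyinterpolationerrorestimateproof4}--\eqref{eqn:cc:morleyinterpolationerrorestimateproof5} and shape regularity to reassemble the $h_T^{-4}$ and $h_T^{-2}$ weighted coefficient sums into $c_\w(v_2,v_2)$. For the second summand both use Lemma~\ref{lem:MorleyCompanion}.d and a triangle inequality; the only cosmetic difference is that the paper bounds $\trinl v_2-I_\M v_2\trinr_\pw\le h_{\max}\trinl h_\T^{-1}(v_2-I_\M v_2)\trinr_\pw$ and invokes the already-established first part, whereas you go via $j_h(v_2)^2\le h_{\max}^2\,c_\w(v_2,v_2)$---equivalent bookkeeping.
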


\begin{proof}[Proof of Lemma~\ref{ellipticityofcp}]
The analysis of $  \trinl h_\T^{-1}(v_2- I_\M v_2) \trinr_\pw$  returns to the proof of Theorem~\ref{int_err} 
that  eventually provides \eqref{eqn:cc:morleyinterpolationerrorestimateproof3}  for one fixed triangle $T\in\T$ with
its neighourhood $\Omega(T)$ for any $v_2\in P_2(\T)$. The substitution of $v_2$ by $h_T^{-1} v_2$ (with a fixed scaling factor $h_T$) in 
\eqref{eqn:cc:morleyinterpolationerrorestimateproof3} 
after a standard inverse estimate  shows, for all  $v_2\in P_2(\T)$, that
\[
h_T^{-2} | v_2- I_\M v_2|^2_{H^2(T)}\lesssim h_T^{-6} \| v_2- I_\M v_2\|^2_{L^2(T)}\lesssim j_h(h_T^{ -1} v_2, T)^2.
\]
The shape regularity of $\T$ implies  that all edge-sizes in the sub-triangulation $\T(\Omega(T))$ that covers the neighbouhood
$\Omega(T)$ (of $T$ and one layer of triangles around $T$) are equivalent to $h_T$. Hence
$ j_h(h_T^{ -1} v_2, T)$ is equivalent to the respective contributions in $c_\w(v_2,v_2)$:
\begin{align*}
 j_h(h_T^{ -1} v_2, T)^2 \lesssim 
\sum_{z \in {\mathcal V} (T)}  \sum_{F \in \E(z)}  {h_F^{-4}} ([v_2]_F(z))^2+ \sum_{E \in \E(T)}  h_E^{-2} (  \fint_E  \jump{{\partial v_\pw}/{\partial \nu_E}} \ds )^2.
\end{align*}
The combination of this estimate with the previous one and the sum over all those estimates lead to
\begin{equation}\label{cceqnewwopsipcc3}
 \trinl h_\T^{-1}(v_2- I_\M v_2) \trinr_\pw^2 \lesssim c_\w(v_2,v_2)
\end{equation}
owing to the finite overlap of the family $(\Omega(T):T\in\T)$. The second term  $\trinl (1-J)I_\M v_2 \trinr_\pw $ is controlled
with  Lemma~\ref{lem:MorleyCompanion}.d and a triangle inequality in 
\[
\Lambda_\jc^{-1} \trinl (1-J)I_\M v_2 \trinr_\pw\le    \trinl  v-I_\M v_2 \trinr_\pw \le   \trinl  v-v_2 \trinr_\pw+   \trinl  v_2 -I_\M v_2 \trinr_\pw.
\]
Since the the last term  $\trinl  v_2 -I_\M v_2 \trinr_\pw\le  h_{\max}\trinl h_\T^{-1}(v_2- I_\M v_2) \trinr_\pw $ is bounded in \eqref{cceqnewwopsipcc3},
the summary of the aforementioned estimates concludes the proof of the lemma.
\end{proof}

\begin{proof}[Proof of energy norm estimate in Theorem~\ref{thmwopsip}] 
The equations \eqref{eq:weakabstract} and \eqref{eq:wopsip} show the key identity 
\[
 a (u, JI_\M e_\w)= F(JI_\M e_\w) =a_\pw(u_\w, e_\w)+ c_\w(u_\w,e_\w)\quad\text{for }e_\w:= I_\M u - u_\w \in P_2(\T).
\]
Remark~\ref{remark3.1} applies verbatim and provides $c_\w(I_\M u,e_\w)=0$. This, 
the key identity, and the definition of the norm  $\|\bullet\|_\w:=A_{\w}(\bullet,\bullet)^{1/2}$ lead to
\begin{align} \label{eqn:cc:proof2onwopsipcc1}
\|e_\w\|^2_\w =a_\pw(I_\M u, e_\w)-a(u, JI_\M e_\w)=a_\pw(u, e_\w-JI_\M e_\w) 
\end{align}
with \eqref{eq:best_approx} in the last step. Set  $e_\M:= I_\M e_\w \in \M(\T)$ and split $e_\w-JI_\M e_\w=(e_\w-e_\M)+(1-J)e_\M$. 
The  last term in \eqref{eqn:cc:proof2onwopsipcc1} is equal to 
\begin{align*}
&a_\pw(u, e_\w-e_\M)+a_\pw(u, e_\M- Je_\M) \\
&= 
a_\pw(I_\M u, e_\w-e_\M)+a_\pw(u-I_\M u, e_\M- Je_\M) 
\\
& \le  
 \trinl  h_\T I_\M u \trinr_\pw  \trinl  h_\T^{-1}(e_\w-e_\M)\trinr_\pw 
 + \trinl  u-I_\M u   \trinr_\pw \trinl  e_\M- Je_\M   \trinr_\pw
 \\
 &\le  \Lambda_\w  (\trinl  h_\T I_\M u \trinr_\pw^2+ \trinl  u-I_\M u   \trinr_\pw^2)^{1/2} \| e_\w  \|_\w
\end{align*} 
with \eqref{eq:best_approx} twice in the first equality,  (weighted) Cauchy inequalities for the inequality in the third line, and
Lemma~\ref{ellipticityofcp} (with $v=0$) in the end. 
The combination with  \eqref{eqn:cc:proof2onwopsipcc1} 
proves 
\[
\|e_\w\|^2_\w\le   \Lambda_\w^2(\trinl  h_\T I_\M u \trinr_\pw^2+ \trinl  u-I_\M u   \trinr_\pw^2).
\]
This and the Pythagoras theorem \eqref{eq:Pythogoras} conclude the proof.
\end{proof}

\begin{proof}[Proof of error estimates in weaker (piecewise) Sobolev  norms in Theorem~\ref{thmwopsip}] 
The error analysis in weaker norms adapts the notation of the beginning of Subsection~\ref{sec:dual} on  $v:= J I_\nc u_\w \in V$
and $\zeta:= J I_\nc z \in V$ for the dual solution $z\in V$ with $\|z \|_{ H^{4-s}(\Omega)}\le C_{\rm reg}$ and 
\[
\|u -v\|_{H^{s}(\Omega)}=a(u-v,z)=a(u,z-\zeta)+a_\pw (u_\w-v,z)
\]
with the key identity $a(u,\zeta) =F(\zeta)=A_\w(u_\w,I_\M z)=a_\pw(u_\w,I_\M z)=a_\pw(u_\w, z)$ (from Remark~\ref{remark3.1}
and   \eqref{eq:best_approx}) in the last step. 
Since  $a_\pw(I_\M u,z-\zeta) =0$ (from  \eqref{eq:best_approx} with $I_\M z=I_\M \zeta$ from \eqref{eq:right_inverse}), 
the first term 
\[
a(u,z-\zeta)=a_\pw(u-I_\M u,z-\zeta)\le (1+\Lambda_{\rm J}){ (1+\Lambda_{\rm M})}  \trinl  u- I_\M u \trinr_\pw  \trinl  z-I_\M z   \trinr_\pw
\]
is controlled by \eqref{eq:z-zeta} (with $z_h=I_\M z$, $I_h=\text{\rm id}$). 
The analysis of the second term $a_\pw (u_\w-v,\zeta)$ follows the corresponding lines
of the proof of the best-approximation in Theorem~\ref{thmwopsip}  with
$u_\M :=I_\nc u_\w$, $u_\w-v= u_\w- u_\M+(1-J) u_\M $. This shows
\begin{align*}
&a_\pw (u_\w-v,z)=a_\pw(u_\w-u_\M,z)+a_\pw(u_\M- Ju_\M,z) \\
&\quad  =  a_\pw(u_\w-u_\M, I_\M z)+a_\pw(u_\M- Ju_\M, z-I_\M z) 
\\
&\quad  \le  
 \trinl  h_\T I_\M z \trinr_\pw  \trinl  h_\T^{-1}(u_\w-u_\M)\trinr_\pw 
 + \trinl  z-I_\M z   \trinr_\pw \trinl  u_\M- Ju_\M   \trinr_\pw
 \\
 &\quad \le  \Lambda_\w  (\trinl  h_\T I_\M z\trinr_\pw^2+ \trinl  z-I_\M z   \trinr_\pw^2)^{1/2} \| u-u_\w  \|_\w
\end{align*} 
from Lemma~\ref{ellipticityofcp} with $v=u$. The final argument is the regularity of $z$ and the approximation estimates 
\(
\trinl z-I_\nc z\trinr_\pw  \le C_{\rm int}(s)C_{\rm reg }(s) h_{\max}^{2-s}
\)
from Subsection~\ref{subsectProofofTheoremthm:aux1}.
The combination of the above arguments shows 
\[
\|u -J I_\nc u_\w\|_{H^{s}(\Omega)}\lesssim h_{\max}^{2-s} \| u-u_\w  \|_\w.
\]
Theorem~\ref{lemmaccapproximationproperties}.d applies to  $u_\w\in P_2(\T)$  and  the remaining arguments 
follow the last lines in the proof Theorem~\ref{thm:aux1}.b
with a triangle inequality in  $H^s(\T)$ in the end.
\end{proof}

\section*{Acknowledgements}
The research of the first author has been supported by the Deutsche Forschungsgemeinschaft in the Priority Program 1748 under the project "foundation and application of generalized mixed FEM towards nonlinear problems
in solid mechanics" (CA 151/22-2).  The finalization of this paper has been supported by   SPARC project 
(id 235) entitled {\it the mathematics and computation of plates} and SERB POWER Fellowship SPF/2020/000019.

\bibliographystyle{amsplain}
\bibliography{ref_combined}
\section*{{Appendix}}
{\it Proof of Lemma~\ref{lem:smoother}.}  For $z \in Y_h \cap Y$,  \eqref{q=id} implies  $\|z-Qz\|_{\widehat Y}=0$,  and hence \eqref{quasioptimalsmoother}  holds. The converse is a consequence of the finite dimension of $Y_h$.   In the context of Peetre-Tartar lemma \cite{tartar2010introduction}, let $\widetilde{Y}:=(1-Q)(Y_h) \subset \widehat{Y}$ denote the range of $1-Q$ and abbreviate $A:=1-\Pi_Y$ and $B:=\Pi_Y$. Then $A \in L(\widetilde{Y}; \widehat Y)$ is injective because $A \widetilde{y}=0$ means $\widetilde{y} \in \widetilde{Y} \cap Y$ for some $y_h \in Y_h$ with $\widetilde{y}=(1-Q)y_h$, whence $y_h \in Y_h \cap Y$ and $\widetilde{y}=0$ from~\eqref{q=id}. Notice that $B \in L(\widetilde{Y};Y)$ is compact (for $\widetilde{Y}$ has finite dimension).
Since $\widetilde{y} = A \widetilde{y} + B \widetilde{y}$ implies $\|\widetilde{y}\|_{\widehat{Y}} \le \|A \widetilde{y}\|_{\widehat{Y}}
+\|B \widetilde{y}\|_{\widehat{Y}} $ for all $ \widetilde{y} \in {\widehat{Y}} $, the Peetre-Tartar lemma proves 
$\gamma \|\widetilde{y}\|_{\widehat{Y}} \le \|A \widetilde{y}\|_{\widehat{Y}}$ for all $ \widetilde{y} \in {\widehat{Y}} $ and some constant $\gamma>0$. This implies for all $y_h \in Y_h $ that 
\[\gamma \|y_h - Qy_h\|_{\widehat{Y}} \le \|A(y_h - Qy_h)\|_{\widehat{Y}} = \|y_h- \Pi_Y y_h\|_{\widehat{Y}} \le \|y_h-y\|_{\widehat{Y}} \text{ for all } y \in Y.
\]
This is \eqref{quasioptimalsmoother} with $\Lambda_{\rm Q}:=1/\gamma>0$. The point in those compactness arguments is that we do know that $\gamma =1/\Lambda_{\rm Q}>0$ with \eqref{quasioptimalsmoother} exists, but we do not know how it depends, e.g., on ${\rm dim } \: Y_h$.
\qed

\medskip \noindent
{\it Proof of Lemma~\ref{lemma2.2}.}  
 It is obvious that {\bf (QO)} implies \eqref{m=id}. The converse follows from a compactness argument from ${\rm dim} \: X_h < \infty$. The subspace $\widetilde{X}:=(1-M)X \subset \widehat{X}$ is complete because $1-M:(X_h \cap X)^{\perp} \cap X \rightarrow \widetilde{X}$ is linear,  bounded, and bijective for the (complete) orthogonal complement $(X_h \cap X)^{\perp} \cap X$ of $X_h \cap X$ in $X$. Then $A:= 1- \Pi_{X_h} \in L(\widetilde{X}; \widehat{X})$ is injective (as $A \widetilde{x}=0$ implies $\widetilde{x}=(1-M)x \in X_h$, whence $x \in X_h \cap X$ and $\widetilde{x}=0$ in $\widetilde{X}$ from~\eqref{m=id}). Since $B:= \Pi_{X_h} \in L(\widetilde{X}; \widehat{X}_h) $ is compact and $\widetilde{x} = A\widetilde{x}  +B\widetilde{x} $ implies 
\[
\|\widetilde{x} \|_{\widehat{X}} \le \|A\widetilde{x} \|_{\widehat{X}} +\|B \widetilde{x} \|_{\widehat{X}}  \text{ for all } \widetilde{x}\in {\widehat{X}},
\]
the Peetre-Tartar lemma \cite{tartar2010introduction} leads to some $\gamma>0$ with 
\[\gamma \|(1-M)x\|_{\widehat{X}}  \le \|A(1-M)x\|_{\widehat{X}}  = \|x-\Pi_{X_h} x\|_{\widehat{X}}  \le \|x-x_h\|_{\widehat{X}} 
\]
for all $x \in X$ and $(1-M)x \in {\widetilde{X}} $ and for all $x_h \in X_h$. This is $\text{\bf (QO)}$ with $C_{\rm qo}:=1/\gamma$.
\qed

\medskip \noindent 
{\it Proof of Theorem~\ref{quasioptimal}.}
[Proof of "$ \Longrightarrow$"] Suppose $M$ satisfies {\bf (QO)} with constant $C_{\rm qo}$. Then for all $x_h,y_h$, the definition of $MPx_h$ leads to the identity
\begin{align*}
	a_h(x_h,y_h) - a(Px_h, Qy_h) ={}& a_h(x_h - MPx_h, Qy_h) = \langle Q^\ast A_h (x_h  - MPx_h),y_h \rangle_{Y_h^* \times Y_h} \\ 
	\le{}& \|Q^\ast A_h\| \: \|y_h\|_{Y_h} \|x_h-MPx_h\|_{X_h}. 
\end{align*}
It remains to prove that $\|x_h - MP x_{h}\|_{X_h} \le (1 + C_{\rm qo})\|x_h - Px_{h}\|_{\widehat{X}}$. 
	This follows from a triangle inequality $\|x_h-MPx_h\|_{X_h} \le \|x_h-Px_h\|_{\widehat{X}} +\|Px_h-MPx_h\|_{\widehat{X}}$ and {\bf (QO)} in $\|Px_h-MPx_h\|_{\widehat{X}} \le C_{\rm qo} \|Px_h-x_h\|_{\widehat{X}}.$ In conclusion,  {\bf(H)} holds with 
	$\Lambda_{\rm H}:= \|Q^\ast A_h\| (1+C_{\rm qo})$.
\medskip 

\noindent
[Proof of "$ \Longleftarrow$"]
Let $\Pi_{X_h} \in L(\widehat{X})$ denote the orthogonal projection 
onto the closed subset $X_h$  in the Hilbert space $\widehat{X}$.
Given any $x \in X$, let  
$x_h^{\ast} := \Pi_{X_h}x=\text{arg min}_{\xi_h \in X_h} \|x - \xi_h \|_{\widehat{X}}$ denote the best-approximation of $x$ 
in $X_h$ in the Hilbert space $\widehat{X}$
and set $e_h := x_h^{\ast} - Mx \in X_h$. 
 The inf-sup condition for $a_h(\bullet,\bullet)$ leads to  $y_h \in Y_h$ with norm 
 $\|y_h \|_{Y_h} \le 1$ such that
\begin{align*}
	\alpha_h \| e_h \|_{X_h} ={}& a_{h}(e_h,y_h) 
	= a_h(x_h^\ast,y_h)  -  a_h(Mx,y_h).
	\end{align*}
Recall the definition of $x_h:=Mx=A_h^ {-1}Q^* Ax$ as the discrete solution for
the right-hand side $a(x,Q\bullet)$ to verify
\[
a_h(Mx,y_h)=a_h(x_h,y_h)=a(x,Qy_h).
\]	
This leads to the identity
\begin{align} \label{eqprolog1}
\alpha_h \| e_h \|_{X_h} = a_h(x_h^\ast,y_h)  -  a(x,Qy_h).
\end{align}
Hypothesis {\bf (H)} and $\|y_h \|_{Y_h} \le 1$ lead to the first 
and  \eqref{quasioptimal} to the last inequality in
\begin{align} \label{eqprolog2}
a_h(x_h^\ast,y_h)  - a(Px_h^\ast,Qy_h) \le   
\Lambda_{\rm H}   \|x_{h}^\ast - Px_{h}^\ast \|_{\widehat{X}}  \le  \Lambda_{\rm H}  \Lambda_{\rm P}	\|x - x_{h}^\ast\|_{\widehat{X}}.
\end{align}
A  triangle inequality and  \eqref{quasioptimal}  imply
\[ \|x - Px_{h}^\ast\|_{{X}} \le  \|x - x_{h}^\ast\|_{\widehat{X}} + \|x_h^* - Px_{h}^\ast\|_{\widehat{X}} \le \ (1+\Lambda_{\rm P}) \|x - x_{h}^\ast\|_{\widehat{X}}. \] 
With the operator norm $|| Q^*A||$ in $L(X;Y_h^*)$ and the duality brackets $\langle \bullet,\bullet \rangle_{Y_h^\ast \times Y_h}$ in $Y_h^\ast \times Y_h$, this and $\|y_h \|_{Y_h} \le 1$ show
\begin{align} \label{eqprolog3}
a(Px_h^\ast - x,Qy_h) 
= \langle Q^*A(Px_h^\ast - x), y_h\rangle_{Y_h^* \times Y_h}
 \le \| Q^*A\|   (1+\Lambda_{\rm P})\|x - x_{h}^\ast \|_{\widehat{X}}. 
\end{align}
The combination of \eqref{eqprolog1}-\eqref{eqprolog3} reads
\begin{align*} 
\alpha_h \| e_h \|_{X_h} \le  (\Lambda_{\rm H} \Lambda_{\rm P}+ \|Q^*A\|(1+\Lambda_{\rm P})) 
\|x-x_{h}^\ast \|_{\widehat{X}}.
\end{align*}
Define $C_{\rm qo}:=1+\alpha_h^{-1} (\Lambda_{\rm H} \Lambda_{\rm P}+\|Q^*A\|(1+\Lambda_{\rm P}))$
and rewrite the last estimate as 
 \begin{align} \label{eqprolog4}
  \| Mx-x_h^* \|_{X_h} = \| e_h \|_{X_h} \le (C_{\rm qo}-1)\|x-x_{h}^\ast \|_{\widehat{X}}.
\end{align}
A triangle inequality 
$\|  x-Mx\|_{\widehat{X}} \le \|  x-x_h^*\|_{\widehat{X}}+\| Mx-x_h^* \|_{X_h}$
and \eqref{eqprolog4} prove ${ \text{\bf{(QO)}}}$ because  of
$\|  x-x_h^*\|_{\widehat{X}}=\min_{x_h\in X_h} \|  x-x_h\|_{\widehat{X}}$.
\qed

\medskip \noindent 
{\it Proof of Theorem~\ref{thm:qo2}}.
[Proof of "$\Longrightarrow$"] Given $x_h' \in X_h' $,  let  $N:= \text{ Ker } M \subset X$; 
and let $N^{\perp}$  denote the orthogonal complement of $N$ in the Hilbert space $X$.  
The restriction $M|_{N^{\perp}}: N^{\perp} \rightarrow X_h'$ of $M$ is 
linear, bounded, and bijective  and hence has a 
linear and bounded inverse $S:= (M|_{N^{\perp}})^{-1}: X_h' \rightarrow N^{\perp}$. 
Since $N$ is closed in  $\widehat{X}= X+X_h$, 
the  orthogonal projection $\Pi_N \in L( \widehat{X}) $ onto $N$ is well-defined
and so is its restriction  $\Pi_N |_{X_h'}\in L(X_h';X)$.
Given $S\in L( X_h';X)$ and $\Pi_N |_{X_h'}$, define 
$P':= \Pi_N + S\in L({X_h'}; X).$
Let $x:=P'x_h' = \Pi_N x_h'+Sx_h' \in X$ and observe $MX = M\vert_{N^\perp}(SX_h')=x_h$ and $MP'= {\rm id}$ in $X_h'$.  
Let $\xi:= \Pi_X x_h' \in X$ be the best-approximation of $x_h$ in $X$ with respect to the norm of $\widehat{X}$.  Since $\xi-P'M\xi \in N \perp
x_h'-P'x_h' \in N^{\perp}$,  the Pythogoras theorem in $\widehat{X}$ reads
$$\|\xi-x_h' +P'(x_h'-M\xi)\|^2_{\widehat{X}} = \|x_h'-Px_h'\|^2_{\widehat{X}} + \|\xi- P'M \xi\|^2_{\widehat{X}}.$$
The left-hand side of the above displayed equality is an upper bound of $ \|x_h'-Px_h'\|^2_{\widehat{X}}$ and is smaller than or equal to the square of 
$$ \|\xi-x_h' +P'(x_h'-M\xi)\|_{\widehat{X}} \le   \| (1- \Pi_X) x_h'\|_{\widehat{X}} + \|P'\| \: \|x_h'-M \xi\|_{X_h} $$
with the operator norm $\|P'\|$ of $P' \in L(X_h'; X)$. Consequently, 
$$ \|x_h'-Px_h'\|_{\widehat{X}} \le \| (1- \Pi_X) x_h'\|_{\widehat{X}} + \|P'\| \: \|x_h'-M \xi\|_{X_h}. $$
A triangle inequality and {\bf (QO)} with  $\|\xi- M \xi\|_{\widehat{X}} \le C_{\rm qo} \|\xi- x_h'\|_{\widehat{X}}$ show
$$ \|x_h'-M \xi\|_{X_h} \le \|\xi-x_h'\|_{\widehat{X}} +C_{\rm qo} \|\xi- x_h'\|_{\widehat{X}} =(1+C_{\rm qo} )\|\xi- x_h'\|_{\widehat{X}}.$$
The combination of the previous two displayed estimates with $\| (1- \Pi_X) x_h'\|_{\widehat{X}}  \le \| x_h'- P x_h'\|_{\widehat{X}} $ (from 
$Px_h' \in X$ and the definition of $\Pi_X x_h$) and  $\Lambda_{\rm P'} := 1 + \|P'\|(1 + C_{\rm qo})$ proves
\begin{equation} \label{eqn:pxh-xh}
\|x_h'-P'x_h'\|_{\widehat{X}} \le \Lambda_{\rm P'} \|x_h'-Px_h'\|_{\widehat{X}}.
\end{equation} 
This and $y: = Qy_h \in Y$ lead in ${\widehat{\text{\bf (QO)}}}$ to
\[a(P'x_h'- PMP'x_h', Qy_h) \le  \widehat{C_{\text{\rm qo}}} \|x_h' - P'x_h'\|_{\widehat{X}} \|y_h- Qy_h\|_{\widehat{Y}}. \]
Recall the definition of $x_h=Mx=A_h^ {-1}Q^* Ax$ as the discrete solution to
the right-hand side $a(x,Q\bullet)$ to verify
$a_h(Mx,y_h)=a_h(x_h,y_h)=a(x,Qy_h).$  The combination with the last displayed inequality with $MP'x_h'=x_h'$ leads to
 \[a(P'x_h'- PMP'x_h', Qy_h) = a_h(x_h',y_h)-a(Px_h', Qy_h) \le  \widehat{C_{\rm qo}}
\|x_h'-Px_h'\|_{\widehat{X}} \|y_h - Qy_h\|_{\widehat{Y}}.  \]
This and~\eqref{eqn:pxh-xh} prove ${\widehat{ \text{\bf (H)}}}$ with $\widehat{\Lambda_{\rm H}} = \widehat{C_{\text{\rm qo}}} 
\Lambda_{\rm P'} = \widehat{C_{\rm qo}}(1 + \|P'\|(1 + C_{\rm qo}))$. 

\noindent [Proof of "$\Longleftarrow$"] Given any $x \in X$ and $y_h \in Y_h$, let $x_h':=Mx \in X_h$ with $a_{h}(x_h',y_h) = a(x,Qy_h)$. This shows in ${\widehat{ \text{\bf (H)}}}$ that 
\[ a(x-PMx, Qy_h) \le  \Lambda_2'  \|x_h'-Px_h'\|_{\widehat{X}} \|y_h-Qy_h\|_{\widehat{Y}}.  \]
This and the operator norm $\|a\|$ of $a(\bullet,\bullet) $ show
\begin{align*}
 a(x-PMx,y ) &= a(x-PMx, y-Qy_h) + a(x-PMx, Qy_h) \\
&  \le \|a\| \|x-PMx\|_{X}  \|y- Qy_h\|_{Y} +
 \Lambda_2'  \|Mx-PMx\|_{\widehat{X}} \|y_h-Qy_h\|_{\widehat{Y}}.
\end{align*}
This and the elementary inequalities 
\[ \|x-PMx\|_X \le \|x-Mx\|_{\widehat{X}} +\|Mx-PMx\|_{\widehat{X}},  \; \|Mx-PMx\|_{\widehat{X}} \le \Lambda_{\rm P} 
\|x-Mx\|_{\widehat{X}},
\]
and 
\[ \|y-Qy_h\|_Y \le \|y-y_h\|_{\widehat{Y}} +\|y_h - Qy_h\|_{\widehat{Y}}, \; \|y_h -Qy_h\|_{\widehat{Y}} \le \Lambda_{\rm Q} \|y-y_h\|_{\widehat{Y}}
\]
conclude the proof of ${\widehat{\text{\bf (QO)}}} $ with 
$ \widehat{C_{\text{\rm qo}}} := \|a\| (1+\Lambda_{\rm P} ) (1+ \Lambda_{\rm Q}) + \Lambda_2' \Lambda_{\rm P} \Lambda_{\rm Q}.$
\qed

\end{document}